\documentclass[12pt]{article}

\usepackage[margin=1in]{geometry} 
\usepackage[utf8]{inputenc}
\usepackage{amsthm}			%basic math (proof environment)   
\usepackage{amsmath}
\usepackage{amssymb}      
\usepackage{enumitem}      
\usepackage{url}           
\usepackage{algpseudocode}
\usepackage{algorithm}
\usepackage{mathtools}
\usepackage{subcaption}
\mathtoolsset{showonlyrefs}
\usepackage{graphicx}
\usepackage{epstopdf} 
\usepackage{mathptmx} 
% DK: I had to comment this because otherwise pdflatex would not compile the file
%\usepackage{ifthen}
\usepackage[usenames,dvipsnames]{xcolor}

\newtheorem{problem}{Problem}
\newtheorem{theorem}{Theorem}
\newtheorem{proposition}[theorem]{Proposition}
\newtheorem{corollary}[theorem]{Corollary}
\newtheorem{lemma}[theorem]{Lemma}
\newtheorem{definition}[theorem]{Definition}

\bibliographystyle{alpha}

%\smartqed 
\newcommand{\mcal}{\mathcal{M}}
\newcommand{\ucal}{\mathcal{U}}
\newcommand{\tcal}{\mathcal{T}}
\newcommand{\ical}{\mathcal{I}}
\newcommand{\Rbb}{\mathbb{R}}
\newcommand{\Nbb}{\mathbb{N}}
\newcommand{\pa}[1]{\left(#1\right)}
\newcommand{\pac}[1]{\left[#1\right]}
\newcommand{\paa}[1]{\left\{#1\right\}}
\newcommand{\papc}[1]{\left(#1\right]}
\newcommand{\pacp}[1]{\left[#1\right)}
\newcommand{\norm}[1]{\left\lVert#1\right\rVert}
\newcommand{\restr}[1]{{\big\vert}_{#1}}
\newcommand{\scalp}[2]{\left\langle #1,#2 \right\rangle}
\newcommand{\map}[5]{\begin{aligned}#1 \,:\, #2 &\to #3 \\ #4 &\mapsto #5\end{aligned}}
\newcommand{\oneOver}[1]{\frac{1}{#1}}
\newcommand{\dd}{\mathrm{d}}
\newcommand{\D}{\mathrm{D}}
\newcommand{\Ddt}[2]{\frac{\D #1}{\dd #2}}
\newcommand{\ddt}[2]{\frac{\dd #1}{\dd #2}}

\newcommand{\Log}{\mathrm{Log}}
\newcommand{\Exp}{\mathrm{Exp}}
\newcommand{\piOnM}{\pi_\mcal}
\newcommand{\piOnT}[1]{\pi_{T_{#1}\mcal}}

\newcommand{\rank}[1]{\mathrm{rank}\pa{#1}}
\newcommand{\stiefel}[2]{\mathrm{St}\hspace*{-0.07cm}\pa{#1,#2}}

\newcommand{\matr}[2]{\Rbb^{#1 \times #2}}
\newcommand{\supOn}[1]{\underset{#1}{\sup}}
\newcommand{\infOn}[1]{\underset{#1}{\inf}}

\newcommand{\floor}[1]{\left \lfloor{#1}\right \rfloor }
\newcommand{\convergesTo}[2]{\overset{#1\to #2}{\longrightarrow}}
\newcommand{\operatornorm}[1]{\norm{#1}_{\mathrm{op}}}
\newcommand{\parallelTransp}[3]{\mathrm{P}_{#1 \to #2}^{#3}}

\newboolean{ShowRevisions}  
\setboolean{ShowRevisions}{false}   
\ifthenelse{\boolean{ShowRevisions}}
{	
	\newcommand{\revision}[1]{\textcolor{blue}{#1}}
}
{
	\newcommand{\revision}[1]{#1}
}

\begin{document}
	
	\title{Hermite interpolation with retractions on manifolds}
	\author{Axel Séguin\thanks{École Polytechnique Fédérale de Lausanne (EPFL) Institute of Mathematics, CH-1015 Lausanne, Switzerland (axelseguin95@gmail.com, daniel.kressner@epfl.ch)} \and Daniel Kressner\footnotemark[1] }
	\date{}
	
	\maketitle
	
	\begin{abstract}	
	Interpolation of data on non-Euclidean spaces is an active research area fostered by its numerous applications. This work considers the Hermite interpolation problem: finding a sufficiently smooth manifold curve that interpolates a collection of data points on a Riemannian manifold while matching a prescribed derivative at each point. A novel procedure relying on the general concept of retractions is proposed to solve this problem on a large class of manifolds, including those for which computing the Riemannian exponential or logarithmic maps is not straightforward, such as the manifold of fixed-rank matrices. The well-posedness of the method is analyzed by introducing and showing the existence of retraction-convex sets, a generalization of geodesically convex sets. A classical result on the asymptotic interpolation error of Hermite interpolation is extended to the manifold setting. Finally numerical experiments on the manifold of fixed-rank matrices and the Stiefel manifold of matrices with orthonormal columns illustrate these results and the effectiveness of the method.
	\end{abstract}
	
	\section{Introduction}
	
	Data processing on non-Euclidean spaces has become a well-established tool in many fields of science and engineering. In particular, there has been a rising interest to interpolate data on a manifold with a curve contained in the manifold. This is motivated by numerous applications in robotics~\cite{parkRavani}~\cite{blochCamarinha}, computer vision~\cite{krakowskiApplied}, medical imaging~\cite{gousenbourgerCurve}~\cite{kim}, statistics~\cite{massartAbsil} and model-order reduction~\cite{amsallem}, just to mention a few. For example, motion-planning of a robotic manipulator can be carried out by interpolating points on the Lie group of rigid motions SE(3)~\cite{parkRavani}. In statistical modeling, estimating covariance matrices between discrete samples of a random field can be viewed as interpolation on the manifold of symmetric positive definite matrices~\cite{massartAbsil}. Reduced-order modeling in engineering typically involves projecting high-dimensional dynamics onto low-dimensional subspaces and interpolating such subspaces on the Grassman manifold is an important task for parameter-dependent systems~\cite{amsallem}. 
	
	Over the last two decades, several ways of performing manifold interpolation computationally have been proposed. These methods are tailored to meet different requirements of the application, concerning the regularity of the interpolating curve and the nature of the interpolation constraints.
	In this present work, we focus on continuously differentiable interpolation curves that match prescribed data \emph{and}
	velocities at each point; this is commonly known as \emph{Hermite interpolation}.

	\paragraph{Related work.} In Euclidean space, the classical solution of the Hermite interpolation problem utilizes piecewise cubic polynomials~\cite{farinBook}. This solution can be characterized in (at least) three different ways: (i) it minimizes the integral of the squared acceleration over the set of admissible interpolating curves, (ii) it is the unique piecewise cubic polynomial interpolating the points and the derivatives, (iii) it can be constructed with a geometric algorithm introduced by de Casteljau~\cite{deCastPaper} involving iterated linear interpolation between suitably chosen control points. 
	As we explain in the following, each of these characterizations can be generalized to manifolds. However, unlike for the Euclidean case, each characterization and extension produces a different curve.
	
	(i) Extending the variational characterization is straightforward: the second derivative (acceleration) is replaced by the covariant derivative of the velocity vector and the search space is constrained to curves on the manifold~\cite{camarinhaLeite}~\cite{zhangNoakes} that satisfy the interpolation constraints. The solution to this variational problem, also known as Riemannian cubic, can be computed by numerical methods for boundary value differential problems, such as shooting methods~\cite{blochCamarinha}.
	
	(ii) Exploiting the polynomial characterization of the solution requires one to recast manifold interpolation into an Euclidean setting. Several strategies have been explored for this purpose, e.g., a  local linearization can be obtained from a (local) bijection between the manifold and a tangent space. For Hermite interpolation, Zimmermann~\cite{zimmermann} proposes to use the Riemannian logarithmic map and its differential to map points and derivatives to the tangent space at one of the data points. Standard techniques, including polynomial interpolation, can then be used to construct the interpolation curve on the tangent space and map it back to the manifold with the Riemannian exponential map.  We refer to~\cite{zimmermann2022} for a recent extension to the multivariate setting.
	
	(iii) The geometric nature of the de Casteljau algorithm lends itself to an intrinsic definition on  manifolds. In fact, transplanting into a manifold setting simply requires to replace
	the linear segments that define the algorithm with geodesic segments, noting that geodesics are the manifold generalization of straight lines. This idea was pioneered by Park and Ravani in~\cite{parkRavani}; it has been adapted to solve the Hermite interpolation problem on spheres and compact Lie groups by Crouch and Leite~\cite{crouchLeite}  and on general Riemannian manifolds by Popiel and Noakes \cite{popielNoakes}. Rodriguez et al.~\cite{rodriguez} proposed a similar approach, where the classical de Casteljau algorithm is interpreted and generalized as the weighted average of two curve branches satisfying the interpolating conditions. This blending technique is adapted to the manifold setting using geodesics branches and geodesic averaging, which -- at least in principle -- allows one to perform Hermite interpolation on a large class of manifolds even if the work~\cite{rodriguez} itself focuses on compact Lie groups and the sphere.
	
	The non-exhaustive list of algorithms above aims at illustrating that most approaches proposed so far focus on relatively simple manifolds: compact Lie groups, symmetric spaces like the sphere or complete Riemannian manifolds. Most of these techniques require to have closed-form expressions or at least a numerically tractable method for computing endpoint geodesics or the Riemannian exponential and logarithmic maps. For instance, for the case of the Stiefel manifold, the exponential map under the canonical metric has a closed-form expression \cite[\S 2.4.2]{edelmanExpForStiefel} and an algorithm to approximate the corresponding logarithmic map has been proposed in \cite{zimmermannLogMap}. In contrast, for the manifold of fixed-rank matrices we are not aware of a computationally efficient way to realize the logarithmic map; a closed-form expression for the exponential map (under a suitable quotient geometry) is given in~\cite[\S 6]{absilLowRankExp}.
	
	A first step towards relaxing computational requirements has been put forth by Polthier and Nava-Yazdani~\cite{navaYazdaniPolthier} by generalizing the de Casteljau algorithm to work with generic endpoint curves instead of geodesics as building blocks. This enlarges the applicability of the algorithm to, e.g., polygonal surfaces. On the other hand, velocity constraints cannot be taken into account and the concatenation of two such generalized de Casteljau curves may result in a non-differentiable junction. These drawbacks of~\cite{navaYazdaniPolthier} were addressed by Krakowski et al.~\cite{krakowski} for the case of the Stiefel manifold with a generalized de Casteljau algorithm that uses a novel class of endpoint curves, termed quasi-geodesics. This technique allows one to interpolate points on the Stiefel manifold with a globally $C^1$ curve but the velocity can only be prescribed at the starting point.  
	
	\paragraph{Contribution.} In this work, we propose and analyze a new Hermite interpolation technique on Riemannian manifolds. Extending upon~\cite{krakowski,navaYazdaniPolthier} in being able to handle velocity constraints, our approach utilizes retractions to construct a novel class of endpoint curves. Retractions can be thought as first-order approximations of the exponential map that are widely used in Riemannian optimization~\cite{Absil2008,boumalBook}. For a large class of manifolds and retractions, the inverse retraction is available in closed-form. Whenever this is the case, our method makes it possible to solve the Hermite interpolation problem in a numerically efficient way on a larger class of manifolds than those available until now.  
	
	\paragraph{Outline.} In Section~\ref{s:generalizedDeCast}, we develop a retraction-based Hermite (RH) interpolation scheme on manifolds. The well-posedness of the method is guaranteed on retraction-convex sets, a generalization of geodesically convex sets that we define and develop in Section~\ref{s:retractionConvexSets}. A convergence analysis of the RH scheme is carried out in Section~\ref{s:analysis}, extending well-known results on the convergence of Hermite interpolation on Euclidean space for sufficiently smooth data. Finally, in Section~\ref{s:numericalExperiments}, we demonstrate several applications of our novel interpolation scheme for both the manifold of fixed-rank matrices and the Stiefel manifold. 
	
	\paragraph{Mathematical setting.}
	We recall some basic definitions of Riemannian geometry needed throughout this paper; see \cite{leeRiemManifs2} for details. Let $\mcal$ denote a $D$-dimensional connected manifold endowed with a Riemannian metric $g$ and the corresponding Levi-Civita connection $\nabla$. The tangent space at $x\in\mcal$ is denoted by $T_x\mcal$ and ${T\mcal := \paa{(x,v): x\in\mcal, v\in T_x\mcal}}$ denotes the tangent bundle. When $\mcal$ is an embedded Riemannian submanifold, we let $N_x\mcal$ denote the normal space at $x\in\mcal$ and $\Pi_x$ the orthogonal projection onto the tangent space at $x$.
	The Riemannian metric defines an inner product $\scalp{\cdot}{\cdot}_x$ on each tangent space $T_x\mcal$, with the induced norm $\norm{\cdot}_x$. We let ${d:\mcal\times\mcal\to\pacp{0,+\infty}}$ denote the Riemannian distance function defined for any $x,y\in\mcal$ by 
	\begin{equation}\label{eq:distanceFunction}
		d(x,y) := \underset{\delta\in\Gamma_{x,y}}{\inf} L(\delta),
	\end{equation}
	where $\Gamma_{x,y}$ contains every piecewise differentiable curve $\delta: \pac{0,1} \to \mcal$ joining $x$ and $y$, and $L(\delta) := \int_{0}^{1}\|\dot\delta(\tau)\|_{\delta(\tau)}\dd\tau$ denotes its length. This also allows us to define the open ball centered in $x$ of radius $r$ as
	\begin{equation} \label{eq:manifoldball}
		B(x,r) := \paa{y\in\mcal: d(x,y)<r}.
	\end{equation}
	
	For $x\in\mcal$, the Riemannian exponential $\Exp_x:T_x\mcal\to\mcal$ maps a tangent vector $v\in\ T_x\mcal$ to $\gamma_{x,v}(1)$, where $\gamma_{x,v}: [0,1]\to \mcal$ denotes the unique geodesic such that $\gamma_{x,v}(0) = x$ and $\dot\gamma_{x,v}(0) = v$. In general, such a geodesic is well-defined (and smooth) only for $v$ in a neighborhood of $0 \in T_x\mcal$ and locally invertible. The Riemannian logarithm $\Log_x$ is defined as the local inverse of the Riemannian exponential.  
	
	The mathematical formulation of the manifold Hermite interpolation problem is the following.
	\begin{problem}\label{problem:HermiteInterpolation}
		Given $N+1$ tangent bundle data points $\paa{(p_i,v_i)}_{i = 0}^N\in T\mcal$ and scalar parameters $t_0<t_1<\dots<t_N$, find a continuously differentiable curve $H:\pac{t_0,t_N}\to\mcal$ such that 
		\begin{equation}
			\begin{cases}
				H(t_i) = p_i,\\
				\dot H(t_i) = v_i,
			\end{cases}
			\quad \forall i = 0,\dots,N.
		\end{equation}
	\end{problem}

	\section{Generalized de Casteljau algorithm with retractions}\label{s:generalizedDeCast}
	
	The classical de Casteljau algorithm~\cite{deCastPaper} is a geometric procedure to construct polynomial curves in $\Rbb^D$. To describe the algorithm, let $\sigma_1(t;x,y) := (1-t)x + ty$ denote 
	the linear interpolation between two points $x,y\in \Rbb^D$.
	Given $N+1$ so called \emph{control points} ${b_0,\dots,b_N \in \Rbb^D}$,  the relation 
	\[
	\sigma_k(t;b_i,\dots,b_{i+k}) := \sigma_1(t;\sigma_{k-1}(t;b_i,\dots,b_{i+k-1}), \sigma_{k-1}(t;b_{i+1},\dots,b_{i+k})), \quad i = 0,\dots,N-k,
	\]
	is applied recursively for $k = N,N-1,\ldots, 2$ to define a polynomial curve $\sigma_{N}$ of degree $N$ such that
	\begin{equation}
		\begin{aligned}
			\sigma_{N}(0;b_0,\dots,b_N) &= b_0,\\
			\sigma_{N}(1;b_0,\dots,b_N) &= b_N,\\
			\dot\sigma_{N}(0;b_0,\dots,b_N) &= N\dot\sigma_1(0;b_0,b_1) =  N(b_1-b_0),\\
			\dot\sigma_{N}(1;b_0,\dots,b_N)
			&= N\dot\sigma_1(1;b_{N-1},b_{N}) = N(b_N-b_{N-1}),
		\end{aligned}
	\end{equation}
	where $\dot\sigma_N$ denotes the derivative of $\sigma_N$ with respect to $t$.
	Only the first and last control points are interpolated while the other points influence the shape of the curve. In~\cite{parkRavani} an extension to control points on a manifold $\mcal$ was proposed that replaces $\sigma_1$ by the endpoint geodesic joining $x$ and $y$. The same recursive relation then yields a manifold curve verifying analogous properties: (a) it is smooth, (b) it interpolates the first and last points, and (c) the derivatives at the first and last points only depend on the first and last two control points, respectively. Property (c) allows one to conveniently control the endpoint derivatives via the choice of $b_1,b_{N-1}$, a property that makes the de Casteljau algorithm useful for Hermite interpolation; see, e.g.,~\cite{popielNoakes}. 
	
	Along the line of work by Krakowski et al. \cite{krakowski}, we consider a generalization of the de Casteljau algorithm that allows for arbitrary smooth manifold curves at each step of the recursion (instead of constructing everything on top of geodesics). If each curve chosen to define the algorithm joins the prescribed endpoints then properties (a) and (b) are trivially satisfied. In \cite[Proposition 13]{krakowski}, sufficient conditions for the chosen curves to produce property (c) are given for $N=2$. The following proposition extends these results to $N=3$. 
	\begin{figure}
		\centering
		\includegraphics[width=0.6\linewidth]{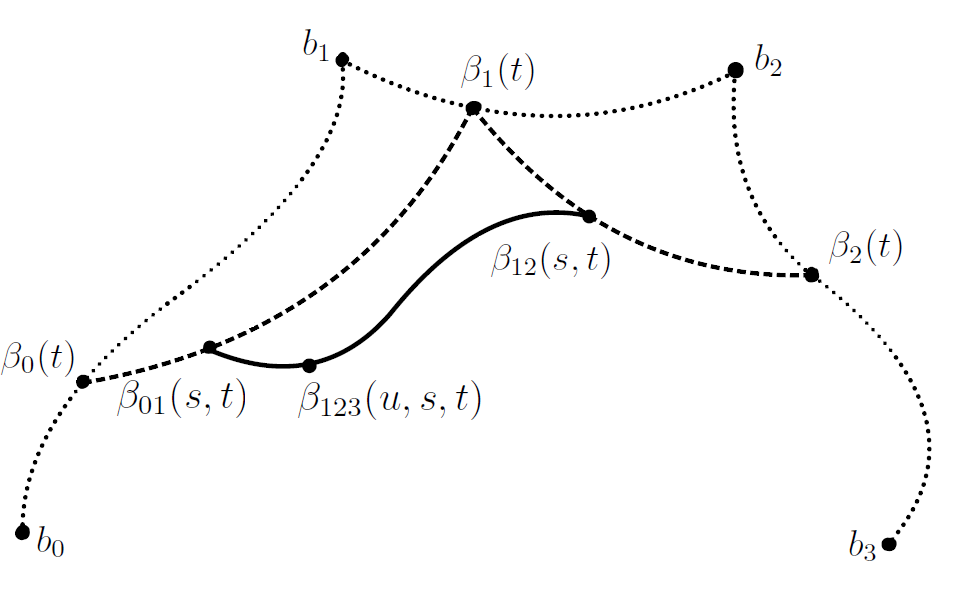}
		\caption{Generalized de Casteljau with 4 control points.}
		\label{fig:myDeCast}
	\end{figure}
	\begin{proposition}\label{prop:generalizedDeCast}
		For $b_0, b_1, b_2, b_3\in \mcal$ consider, as in Figure~\ref{fig:myDeCast}:
		\begin{itemize}[label=-]
			\item smooth $\beta_i:\pac{0,1}\to\mcal$ joining $b_i$ and $b_{i+1}$ for each $i=0,1,2$, 
			\item smooth $\beta_{01}:\pac{0,1}^2\to\mcal$ such that $\beta_{01}(\cdot,t)$ joins $\beta_0(t)$ and $\beta_1(t)$ for every $t\in [0,1]$,
			\item smooth $\beta_{12}:\pac{0,1}^2\to\mcal$ such that $\beta_{12}(\cdot,t)$ joins $\beta_1(t)$ and $\beta_2(t)$ for every $t\in [0,1]$,
			
			\item smooth $\beta_{012}:\pac{0,1}^3\to\mcal$ such that $\beta_{123}(\cdot,s,t)$ joins $\beta_{01}(s,t)$ and $\beta_{12}(s,t)$ for every $s,t\in [0,1]$.
		\end{itemize}
		If, additionally,
		\begin{enumerate}[label=(\roman*)]
			\item \label{item:beta12} $\beta_{01}(s,0) = \beta_0(s)$ and $\beta_{12}(s,1) = \beta_2(s)$, 
			%\item \label{item:beta23} 
			\item \label{item:beta123a} $\beta_{012}(s,0,0) = \beta_0(s)$ and $\beta_{012}(s,1,1) = \beta_2(s)$,
			%\item \label{item:beta123b} 
		\end{enumerate}
		then the generalized de Casteljau manifold curve 
		\begin{equation}
			\beta(t) = \beta_{012}(t,t,t)
		\end{equation} 
		satisfies  
		\begin{equation} \label{eq:interpolationOfDeCast}
			\beta(0) = b_0,\quad \beta(1) = b_3,
		\end{equation}
		and 
		\begin{equation}\label{eq:derivativeOfDeCast}
			\dot\beta(0) = 3\dot\beta_0(0),\quad \dot\beta(1) = 3\dot\beta_2(1).
		\end{equation}
	\end{proposition}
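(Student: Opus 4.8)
The plan is to verify the two interpolation identities \eqref{eq:interpolationOfDeCast} and the two endpoint-derivative identities \eqref{eq:derivativeOfDeCast} directly, the former by substitution and the latter by the chain rule applied to $t\mapsto\beta_{012}(t,t,t)$. Throughout, write $\partial_i$ for the partial derivative of a smooth $\mcal$-valued map (on $\pac{0,1}^2$ or $\pac{0,1}^3$) with respect to its $i$-th argument, and recall the convention that ``$\alpha$ joins $x$ and $y$'' means $\alpha(0)=x$, $\alpha(1)=y$. This convention supplies the branch identities $\beta_{012}(0,s,t)=\beta_{01}(s,t)$, $\beta_{012}(1,s,t)=\beta_{12}(s,t)$, $\beta_{01}(0,t)=\beta_0(t)$, $\beta_{12}(1,t)=\beta_2(t)$, and $\beta_i(0)=b_i$, $\beta_i(1)=b_{i+1}$, which together with hypotheses~\ref{item:beta12} and~\ref{item:beta123a} are everything the argument uses.

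The interpolation identities are immediate: by~\ref{item:beta123a}, $\beta(0)=\beta_{012}(0,0,0)=\beta_0(0)=b_0$ and $\beta(1)=\beta_{012}(1,1,1)=\beta_2(1)=b_3$. For the derivative at $0$, note first that $\beta_{012}(0,0,0)=b_0$, so all quantities below lie in $T_{b_0}\mcal$; composing $t\mapsto(t,t,t)$ with $\beta_{012}$ and using linearity of the differential,
\[
\dot\beta(0)=\partial_1\beta_{012}(0,0,0)+\partial_2\beta_{012}(0,0,0)+\partial_3\beta_{012}(0,0,0).
\]
I would then evaluate the three terms by differentiating the appropriate $\mcal$-valued identity in a single variable: (a) $\partial_1\beta_{012}(0,0,0)=\dot\beta_0(0)$, by differentiating $\beta_{012}(s,0,0)=\beta_0(s)$ (hypothesis~\ref{item:beta123a}) at $s=0$; (b) using the branch identity $\beta_{012}(0,s,t)=\beta_{01}(s,t)$, the remaining two partials equal $\partial_1\beta_{01}(0,0)$ and $\partial_2\beta_{01}(0,0)$; (c) $\partial_1\beta_{01}(0,0)=\dot\beta_0(0)$ by differentiating $\beta_{01}(s,0)=\beta_0(s)$ (hypothesis~\ref{item:beta12}) at $s=0$, and $\partial_2\beta_{01}(0,0)=\dot\beta_0(0)$ by differentiating $\beta_{01}(0,t)=\beta_0(t)$ at $t=0$. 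Summing the three contributions gives $\dot\beta(0)=3\dot\beta_0(0)$.

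The identity $\dot\beta(1)=3\dot\beta_2(1)$ follows by the mirror-image computation at the other corner: use $\beta_{012}(s,1,1)=\beta_2(s)$ from~\ref{item:beta123a} for the first partial, the branch identity $\beta_{012}(1,s,t)=\beta_{12}(s,t)$ to pass to $\partial_1\beta_{12}(1,1)$ and $\partial_2\beta_{12}(1,1)$, and then $\beta_{12}(s,1)=\beta_2(s)$ from~\ref{item:beta12} together with $\beta_{12}(1,t)=\beta_2(t)$ to identify both with $\dot\beta_2(1)$. None of this is deep; the only genuine points of care are justifying the chain-rule decomposition of $\dot\beta$ as a sum of three tangent vectors in one tangent space (cleanest in a local chart around $b_0$, resp.\ $b_3$, or directly from linearity of $\mathrm{D}\beta_{012}$) and, above all, the bookkeeping: for each partial, selecting the right chain of identities (one ``joins'' relation plus hypothesis~\ref{item:beta12} or~\ref{item:beta123a}) that collapses it to $\dot\beta_0(0)$, resp.\ $\dot\beta_2(1)$, without mixing up which argument slot each hypothesis constrains. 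I expect this last point to be the main, if minor, obstacle.
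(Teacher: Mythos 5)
Your proposal is correct and follows essentially the same route as the paper's proof: decompose $\dot\beta$ via the chain rule into the three partial derivatives of $\beta_{012}$, then collapse each to $\dot\beta_0(0)$ (resp.\ $\dot\beta_2(1)$) using the appropriate ``joins'' branch identity together with hypotheses~\ref{item:beta12} and~\ref{item:beta123a}, exactly as the paper does in~\eqref{eq:derivativeOfBeta} and the subsequent computation.
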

	
	\begin{proof}
		The interpolation condition~\eqref{eq:interpolationOfDeCast} follows directly from the definitions:
		\[
		\beta(0) = \beta_{012}(0,0,0) = \beta_{01}(0,0) = \beta_0(0) = b_0
		\]
		and, analogously, 
		$
		\beta(1) = b_3.
		$
		To prove~\eqref{eq:derivativeOfDeCast}, we first note that 
		\begin{equation}\label{eq:derivativeOfBeta}
			\dot\beta(t) = \ddt{}{s}\beta_{012}(s,t,t)\restr{s = t} + \ddt{}{s}\beta_{012}(t,s,t)\restr{s = t} + \ddt{}{s}\beta_{012}(t,t,s)\restr{s = t}.
		\end{equation}
		At $t = 0$, inserting the definitions of the curves as well as conditions~\ref{item:beta12} and~\ref{item:beta123a} we thus obtain
		\begin{align}
			\dot\beta(0) & = \ddt{}{s}\beta_{012}(s,0,0)\restr{s = 0} +  \ddt{}{s}\beta_{012}(0,s,0)\restr{s = 0} + \ddt{}{s}\beta_{012}(0,0,s)\restr{s = 0} \\
			& = \dot\beta_0(0) + \ddt{}{s}\beta_{01}(s,0)\restr{s = 0} + \ddt{}{s}\beta_{01}(0,s)\restr{s = 0} = 3\dot\beta_0(0).
		\end{align}
		Analogously, one establishes $\dot\beta(1) = 3\dot\beta_2(1)$, which completes the proof. \qed
	\end{proof}
	
	When one employs the same type of endpoint curve to define each $\beta_0,\dots,\beta_{012}$ in Proposition~\ref{prop:generalizedDeCast}, such as endpoint geodesics or endpoint quasi-geodesics as proposed by \cite{krakowski}, then conditions~\ref{item:beta12} and~\ref{item:beta123a} are trivially satisfied. With this simplification, a result of the form~\eqref{eq:derivativeOfDeCast} for a generalized de Casteljau algorithm of arbitrary order $N$ can be found in \cite[Theorem 8]{navaYazdaniPolthier}. However, to be useful for Hermite interpolation, we need to have explicit relationships between the control points and $\dot\beta_0(0)$ as well as $\dot\beta_2(1)$. While both are available for geodesics, only the first relationship can be made explicit for quasi-geodesics. Consequently, the interpolation problem considered in \cite{krakowski} incorporates a velocity constraint at the first interpolation point only. In the next section we propose a new family of endpoint curves for defining curves $\beta_0,\dots,\beta_{012}$ that satisfy Proposition~\ref{prop:generalizedDeCast} and yield explicit relationships between the control points and $\dot\beta_0(0)$, $\dot\beta_2(1)$.
	
	\label{s:RHinterpolationScheme}
	
	\subsection{A family of endpoint retraction curves}
	We use retractions~\cite[Definition 3.47]{boumalBook} to construct endpoint manifold curves. A retraction $R$ is a smooth map from the tangent bundle $T\mcal$ to the manifold defined in a neighborhood of the origin of each tangent space. Letting $R_x$ denote the restriction of $R$ onto $T_x\mcal$ (that is, $R_x(v) = R(x,v)$), one additionally requires that $R_x(0) = x$ and $\D R_x(0) = \mathrm{I}_{T_x\mcal}$, the identity map of the tangent space.

	The latter condition, also known as local rigidity, guarantees that any retraction is locally invertible; we denote the inverse retraction as $R_x^{-1}$. 
	
	\begin{definition}\label{def:endpointRetr}
		Given $x,y\in\mcal$ and $r\in\pac{0,1}$, the {\emph{$r$-endpoint retraction curve}} joining $x$ and $y$ is defined as
		\begin{equation}
			c_r(t;x,y) = R_{q(r)}\pa{(1-t)R_{q(r)}^{-1}(x) + t R_{q(r)}^{-1}(y)}
		\end{equation}
		where $q(r) = R_x(r R_x^{-1}(y))$.  
	\end{definition}
	The choice of $r\in\pac{0,1}$ determines the anchor point for the retraction curve that defines $c_r$.
	Here and in the following two sections, we assume that any use of the retraction and its inverse is well-defined. For example, this is the case for Definition~\ref{def:endpointRetr} for any $r\in\pac{0,1}$ when
	$x$ and $y$ are sufficiently close. A more precise statement on the well-posedness of $r$-endpoint retraction curves can be found in Proposition~\ref{prop:rEndPointRetrCurveWellPosedness} below.
	\begin{proposition} \label{prop:endPointRetractionCurveProps}
		The $r$-endpoint retraction curve family satisfies the following properties:
		\begin{enumerate}[label=(\roman*)]
			\item \label{item:startsAtX} $c_r(0;x,y) = x$ and $c_r(1;x,y) = y$ for every $r\in\pac{0,1}$;
			
			\item \label{item:controlDerivIn0}$\dot c_0(0;x,y) = R_{x}^{-1}(y)$ and $\dot c_1(1;x,y) = -R_{y}^{-1}(x)$.
			%		\item \label{item:controlDerivIn1}.
		\end{enumerate}
	\end{proposition}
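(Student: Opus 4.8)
The plan is to verify both claims by direct substitution and differentiation, using only two facts: that $R_{q(r)}^{-1}$ is by construction the local inverse of $R_{q(r)}$, and that local rigidity gives $\D R_z(0) = \mathrm{I}_{T_z\mcal}$ at every point $z\in\mcal$. Throughout, I would keep in mind the standing assumption that $x$ and $y$ are close enough that every retraction and inverse retraction occurring below lies in its domain of definition (this is what makes the affine segment $(1-t)R_{q(r)}^{-1}(x) + tR_{q(r)}^{-1}(y)$ stay in the domain of $R_{q(r)}$ for all $t\in\pac{0,1}$, see Proposition~\ref{prop:rEndPointRetrCurveWellPosedness}).

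For \ref{item:startsAtX}, I would simply plug in $t=0$ and $t=1$ into Definition~\ref{def:endpointRetr}. At $t=0$ the argument of $R_{q(r)}$ is $R_{q(r)}^{-1}(x)$, so $c_r(0;x,y) = R_{q(r)}\pa{R_{q(r)}^{-1}(x)} = x$; at $t=1$ the argument is $R_{q(r)}^{-1}(y)$, so $c_r(1;x,y) = y$. Both identities hold for any admissible $r\in\pac{0,1}$ since they do not involve the specific value of $q(r)$, only that $R_{q(r)}\circ R_{q(r)}^{-1}$ is the identity near $x$ and $y$.

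For \ref{item:controlDerivIn0}, the key observation is that the extreme values $r=0$ and $r=1$ collapse the anchor point $q(r)$ onto an endpoint: $q(0) = R_x(0\cdot R_x^{-1}(y)) = R_x(0) = x$ and $q(1) = R_x\pa{R_x^{-1}(y)} = y$. For $r=0$, using $R_x(0)=x$ and hence $R_x^{-1}(x)=0$, the curve simplifies to $c_0(t;x,y) = R_x\pa{t\,R_x^{-1}(y)}$; the chain rule then gives $\dot c_0(t;x,y) = \D R_x\pa{t\,R_x^{-1}(y)}\pac{R_x^{-1}(y)}$, and evaluating at $t=0$ with $\D R_x(0) = \mathrm{I}_{T_x\mcal}$ yields $\dot c_0(0;x,y) = R_x^{-1}(y)$. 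Symmetrically, for $r=1$, using $R_y^{-1}(y)=0$ the curve becomes $c_1(t;x,y) = R_y\pa{(1-t)R_y^{-1}(x)}$, so $\dot c_1(t;x,y) = -\D R_y\pa{(1-t)R_y^{-1}(x)}\pac{R_y^{-1}(x)}$, and evaluating at $t=1$ with $\D R_y(0)=\mathrm{I}_{T_y\mcal}$ gives $\dot c_1(1;x,y) = -R_y^{-1}(x)$.

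I do not expect a genuine obstacle: the statement is essentially a bookkeeping exercise, and the only point needing a little care is to make sure the simplification $R_z^{-1}(z)=0$ is invoked correctly (it follows from $R_z(0)=z$ together with local invertibility) and that the chain rule is applied to a smooth composition on a set where everything is defined. If one wanted to be fully rigorous about the latter, it would suffice to note that the inner affine path passes through $0\in T_x\mcal$ (resp. $T_y\mcal$) at the relevant endpoint, so local rigidity applies exactly there.
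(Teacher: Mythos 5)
Your proposal is correct and follows essentially the same approach as the paper: direct substitution for \ref{item:startsAtX}, and the chain rule together with local rigidity and $q(0)=x$, $q(1)=y$ for \ref{item:controlDerivIn0}. The only cosmetic difference is that you first simplify the formula for $c_0$ and $c_1$ using $R_z^{-1}(z)=0$ and then differentiate, whereas the paper differentiates the general expression for $c_r(t;x,y)$ and then specializes $r$ and $t$; both computations are the same.
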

	\begin{proof}
		The definition of $c_r$ directly implies~\ref{item:startsAtX}. To show~\ref{item:controlDerivIn0}, we first note that
		\begin{equation}
			\dot c_r(t;x,y) = \D R_{q(r)}\pa{(1-t)R_{q(r)}^{-1}(x) + t R_{q(r)}^{-1}(y)}\pac{R^{-1}_{q(r)}(y) - R^{-1}_{q(r)}(x)}.
		\end{equation}
		Because $q(0)=x$ and $q(1) = y$, the local rigidity of the retraction yields
		\begin{align}
			&\dot c_0(0;x,y) = \D R_x(0)\pac{R^{-1}_x(y)} =R^{-1}_x(y),\\
			&\dot c_1(1;x,y) = \D R_y(0)\pac{-R_y^{-1}(x)} = -R_y^{-1}(x). 
		\end{align}  \qed
	\end{proof}
	Note that $c_r$ is not invariant under exchanging the endpoints, that is $c_r(t;x,y)$ does not coincide with $c_r(1-t;y,x)$ in general, unless the exponential map itself is chosen as retraction. On the other hand, extreme members of the family are related via $
	c_0(t;x,y) = c_1(1-t;y,x).
	$
	
	We will now explain how $c_r$ is used to define curves that satisfy the conditions of Proposition~\ref{prop:generalizedDeCast} and, in turn, to define a suitable generalization of the de Casteljau algorithm. From Proposition~\ref{prop:endPointRetractionCurveProps}~\ref{item:controlDerivIn0} and \eqref{eq:derivativeOfDeCast}, it follows that $\beta_0(\cdot) = c_0(\cdot;b_0,b_1)$ and $\beta_2(\cdot) = c_1(\cdot;b_2,b_3)$ are canonical choices for joining $b_0$ with $b_1$ and $b_2$ with $b_3$, respectively. The other curves must be suitably chosen from the $r$-endpoint retraction curve family in order to satisfy Proposition~\ref{prop:generalizedDeCast}.
	\begin{proposition}\label{prop:retractionDeCast}
		The following choices of $\beta_0$, $\beta_1$, $\beta_2$, $\beta_{01}$, $\beta_{12}$ and $\beta_{012}$ satisfy the conditions of Proposition~\ref{prop:generalizedDeCast}: 
		\begin{itemize}[label=-]
			\item $\beta_0(t) = c_0(t;b_0,b_1)$, $\beta_1(t) = c_{r_1(t)}(t;b_1,b_2)$, $\beta_2(t) = c_1(t;b_2,b_3)$,
			\item $\beta_{01}(s,t) = c_{r_{01}(s,t)}(s;\beta_0(t),\beta_1(t))$, $\beta_{12}(s,t) = c_{r_{12}(s,t)}(s;\beta_1(t),\beta_2(t))$,
			\item $\beta_{012}(u,s,t) = c_{r_{012}(u,s,t)}(u;\beta_{01}(s,t),\beta_{12}(s,t))$,
		\end{itemize}
		for any smooth functions $r_1:\pac{0,1}\to\pac{0,1}$, $r_{01}, r_{12}:\pac{0,1}^2\to\pac{0,1}$ and $r_{012}:\pac{0,1}^3\to \pac{0,1}$ such that 
		\begin{equation}\label{eq:constraintsOnTheChoiceOfR01ToR012}
			r_{01}(s,0) = 0,\quad 
			r_{12}(s,1) = 1, \quad 
			r_{012}(s,0,0) = 0, \ 
			r_{012}(s,1,1) = 1.
		\end{equation}
		Moreover, the resulting manifold curve $\beta(t) = \beta_{012}(t,t,t)$ satisfies
		\begin{equation}
			\label{eq:derivativeOfRetrDeCast0}\dot\beta(0) = 3\dot c_0(0;b_0, b_1) = 3 R_{b_0}^{-1}(b_1),\quad 
			\dot\beta(1) = 3\dot c_1(1;b_2, b_3) = -3 R_{b_3}^{-1}(b_2).
		\end{equation}
	\end{proposition}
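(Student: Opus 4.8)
The plan is to verify that the proposed assignments satisfy the hypotheses of Proposition~\ref{prop:generalizedDeCast} and then read off the endpoint derivatives. First I would check the ``joining'' conditions: each $\beta_i$ joins $b_i$ and $b_{i+1}$ by Proposition~\ref{prop:endPointRetractionCurveProps}\ref{item:startsAtX}, since $c_r(0;x,y)=x$ and $c_r(1;x,y)=y$ for every admissible $r$; the same property applied to $c_{r_{01}(s,t)}(\cdot;\beta_0(t),\beta_1(t))$ shows $\beta_{01}(\cdot,t)$ joins $\beta_0(t)$ and $\beta_1(t)$, and analogously for $\beta_{12}$ and $\beta_{012}$. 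Smoothness is inherited from the smoothness of $R$, $R^{-1}$ and the $r$-functions, so the structural assumptions of Proposition~\ref{prop:generalizedDeCast} hold.

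Next I would verify conditions~\ref{item:beta12} and~\ref{item:beta123a}. For~\ref{item:beta12} one needs $\beta_{01}(s,0)=\beta_0(s)$: by definition $\beta_{01}(s,0)=c_{r_{01}(s,0)}(s;\beta_0(0),\beta_1(0))=c_{0}(s;b_0,b_1)=\beta_0(s)$, using $r_{01}(s,0)=0$ from~\eqref{eq:constraintsOnTheChoiceOfR01ToR012} together with $\beta_0(0)=b_0$ and $\beta_1(0)=c_{r_1(0)}(0;b_1,b_2)=b_1$. The identity $\beta_{12}(s,1)=\beta_2(s)$ follows symmetrically from $r_{12}(s,1)=1$, $\beta_1(1)=b_2$ and $\beta_2(1)=b_3$. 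For~\ref{item:beta123a}, $\beta_{012}(s,0,0)=c_{r_{012}(s,0,0)}(s;\beta_{01}(0,0),\beta_{12}(0,0))=c_0(s;b_0,b_1)=\beta_0(s)$ using $r_{012}(s,0,0)=0$, $\beta_{01}(0,0)=\beta_0(0)=b_0$ (just shown) and $\beta_{12}(0,0)=\beta_1(0)=b_1$; the condition $\beta_{012}(s,1,1)=\beta_2(s)$ is the mirror argument with $r_{012}(s,1,1)=1$. Hence all hypotheses of Proposition~\ref{prop:generalizedDeCast} are met, and~\eqref{eq:interpolationOfDeCast}--\eqref{eq:derivativeOfDeCast} give $\beta(0)=b_0$, $\beta(1)=b_3$, $\dot\beta(0)=3\dot\beta_0(0)$ and $\dot\beta(1)=3\dot\beta_2(1)$.

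Finally I would compute $\dot\beta_0(0)$ and $\dot\beta_2(1)$ explicitly. Since $\beta_0(t)=c_0(t;b_0,b_1)$, Proposition~\ref{prop:endPointRetractionCurveProps}\ref{item:controlDerivIn0} gives $\dot\beta_0(0)=\dot c_0(0;b_0,b_1)=R_{b_0}^{-1}(b_1)$; since $\beta_2(t)=c_1(t;b_2,b_3)$, the same proposition gives $\dot\beta_2(1)=\dot c_1(1;b_2,b_3)=-R_{b_3}^{-1}(b_2)$. Substituting into $\dot\beta(0)=3\dot\beta_0(0)$ and $\dot\beta(1)=3\dot\beta_2(1)$ yields~\eqref{eq:derivativeOfRetrDeCast0}.

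The argument is essentially bookkeeping: the only mild subtlety is to keep straight which slot of each multi-argument curve is being frozen and to use the boundary constraints~\eqref{eq:constraintsOnTheChoiceOfR01ToR012} in tandem with the already-established lower-level identities (e.g.\ $\beta_{01}(0,0)=b_0$) when verifying~\ref{item:beta123a}; this is the step most prone to indexing slips but presents no real obstacle. I would also note in passing that well-definedness of all the retractions and inverse retractions involved is assumed here, as stated in the text, with the precise conditions deferred to Proposition~\ref{prop:rEndPointRetrCurveWellPosedness}.
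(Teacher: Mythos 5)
Your proof is correct and follows essentially the same route as the paper: you use Proposition~\ref{prop:endPointRetractionCurveProps}~\ref{item:startsAtX} to check the joining conditions, directly verify~\ref{item:beta12} and~\ref{item:beta123a} of Proposition~\ref{prop:generalizedDeCast} by plugging in the boundary constraints~\eqref{eq:constraintsOnTheChoiceOfR01ToR012}, and then combine~\eqref{eq:derivativeOfDeCast} with Proposition~\ref{prop:endPointRetractionCurveProps}~\ref{item:controlDerivIn0} to get~\eqref{eq:derivativeOfRetrDeCast0}. The only difference is that you spell out the intermediate evaluations (e.g.\ $\beta_1(0)=b_1$, $\beta_{12}(0,0)=b_1$) slightly more explicitly than the paper, which is harmless.
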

	\begin{proof}
		Proposition~\ref{prop:endPointRetractionCurveProps}~\ref{item:startsAtX} implies that the curves $\beta_0$, $\beta_1$, $\beta_2$, $\beta_{01}(\cdot, t)$, $\beta_{12}(\cdot,t)$ and $\beta_{012}(\cdot, s,t)$ have the correct endpoints for every $s,t\in\pac{0,1}$ and any choice of $r_1$, $r_{01}, r_{12}$ and $r_{012}$. Direct computation shows the remaining requirements~\ref{item:beta12} and \ref{item:beta123a} of Proposition~\ref{prop:generalizedDeCast}:
		\begin{enumerate}[label=(\roman*)]
			\item $\beta_{01}(s,0) = c_{r_{01}(s,0)}(s;\beta_0(0),\beta_1(0)) = c_0(s;b_0,b_1) = \beta_0(s)$, \\
			$\beta_{12}(s,1) = c_{r_{12}(s,1)}(s;\beta_1(1),\beta_2(1)) = c_1(s;b_2,b_3) = \beta_2(s)$,
			\item $\beta_{012}(s,0,0) = c_{r_{012}(s,0,0)}(s;\beta_{01}(0,0),\beta_{12}(0,0)) = c_0(s;b_0,b_1) = \beta_0(s)$, \\
			$\beta_{012}(s,1,1) = c_{r_{012}(s,1,1)}(s;\beta_{01}(1,1),\beta_{12}(1,1)) = c_1(s;b_2,b_3) = \beta_2(s)$.
		\end{enumerate}
		Finally, the relation~\eqref{eq:derivativeOfRetrDeCast0} follows from combining~\eqref{eq:derivativeOfDeCast} with Proposition~\ref{prop:endPointRetractionCurveProps}~\ref{item:controlDerivIn0}. \qed
	\end{proof}
	
	Proposition~\ref{prop:retractionDeCast} offers a great degree of flexibility in choosing $r_1$, $r_{01}, r_{12}$, and $r_{012}$. For practical purposes, a simple choice that leads to a computationally inexpensive evaluation of the curve is preferable. We propose to choose
	\begin{equation}\label{eq:bestChoiceOfr1r01r12r012}
		r_1(s) = 1/2,\quad
		r_{01}(s,t) = 0,\quad 
		r_{12}(s,t) = 1,\quad 
		r_{012}(u,s,t) = t.
	\end{equation}
	\revision{By choosing constant $r_{01}$ and $r_{12}$, the evaluation of each of the intermediate curves $\beta_{01}$ and $\beta_{12}$ requires one retraction and one inverse retraction only for every value of $(s,t)$. For any other choice of $r_{01}$ or $r_{12}$ satisfying Proposition~\ref{prop:retractionDeCast}, these evaluations require two retractions and three inverse retractions.}
	\revision{In total,} as we will see below in Algorithm~\ref{alg:onlinePhase}, the choice~\eqref{eq:bestChoiceOfr1r01r12r012} essentially consists of 7 retractions and 5 inverse retractions per evaluation of the generalized de Casteljau manifold curve, ignoring the cost for preprocessing. 
	
	At this point, the choice of $r_1(s) = 1/2$ appears to be ad hoc, especially because Proposition~\ref{prop:retractionDeCast} imposes no constraint on $r_1$.
	As we will see in Section~\ref{ss:needForBoundedDerivatives}, this choice of $r_1$ is crucial for the scheme to attain favorable convergence properties.
	From now on, we restrict ourselves to~\eqref{eq:bestChoiceOfr1r01r12r012}.
	\begin{definition} \label{def:genCasteljau}
		Given control points $b_0,b_1,b_2,b_3 \in \mcal$
		we use $\beta(\cdot;b_0,b_1,b_2,b_3)$ to denote the \emph{generalized de Casteljau curve} constructed in Proposition~\ref{prop:retractionDeCast} with the choice~\eqref{eq:bestChoiceOfr1r01r12r012} for $r_1,r_{01},r_{12},r_{012}$.
	\end{definition}

	\subsection{The retraction-based Hermite (RH) interpolation scheme}
	
	The generalized de Casteljau curve of Definition~\ref{def:genCasteljau} will now be used to solve the Hermite interpolation problem, Problem~\ref{problem:HermiteInterpolation}, by choosing suitable control points.
	\begin{proposition}\label{prop:hermiteRetrBuildingBlock}
		Given $(p_0,v_0), (p_1, v_1)\in T\mcal$, define 
		\begin{equation}
			p_0^+ = R_{p_0}\pa{\frac{1}{3}v_0},\quad p_1^- = R_{p_1}\pa{-\frac{1}{3}v_1}
		\end{equation}
		and let $\alpha(t)\equiv \alpha(t;p_0,v_0,p_1,v_1):=\beta(t;p_0,p_0^+,p_1^-,p_1)$ denote the generalized de Casteljau curve according to Definition~\ref{def:genCasteljau}. Then
		\begin{equation}
			\alpha(0) = p_0,\	\alpha(1) = p_1,\quad 
			\dot\alpha(0) = v_0,\ \dot\alpha(1) = v_1.
		\end{equation}
	\end{proposition}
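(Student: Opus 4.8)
The plan is to obtain the statement as an immediate corollary of Proposition~\ref{prop:retractionDeCast}, applied to the control points $(b_0,b_1,b_2,b_3) = (p_0,\, p_0^+,\, p_1^-,\, p_1)$. By construction, $\alpha$ is precisely the generalized de Casteljau curve $\beta(\cdot\,;\, p_0, p_0^+, p_1^-, p_1)$ of Definition~\ref{def:genCasteljau}, so every conclusion of Proposition~\ref{prop:retractionDeCast} transfers verbatim under this relabeling of the control points. Nothing new needs to be built; the work is entirely in substituting the definitions of $p_0^+$ and $p_1^-$.

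\textbf{Endpoint conditions.} First I would read off the interpolation of positions from~\eqref{eq:interpolationOfDeCast} (which holds for $\beta$ through Proposition~\ref{prop:retractionDeCast}): $\alpha(0) = \beta(0;\, p_0, p_0^+, p_1^-, p_1) = b_0 = p_0$ and, symmetrically, $\alpha(1) = b_3 = p_1$. There is nothing further to verify here.

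\textbf{Velocity conditions.} Next, for the velocities I would invoke~\eqref{eq:derivativeOfRetrDeCast0}, which gives $\dot\alpha(0) = 3\, R_{b_0}^{-1}(b_1) = 3\, R_{p_0}^{-1}(p_0^+)$ and $\dot\alpha(1) = -3\, R_{b_3}^{-1}(b_2) = -3\, R_{p_1}^{-1}(p_1^-)$. Substituting $p_0^+ = R_{p_0}(\tfrac13 v_0)$ and $p_1^- = R_{p_1}(-\tfrac13 v_1)$ and using that $R_x^{-1}$ is the local inverse of $R_x$ — so that $R_{p_0}^{-1}\big(R_{p_0}(\tfrac13 v_0)\big) = \tfrac13 v_0$ and $R_{p_1}^{-1}\big(R_{p_1}(-\tfrac13 v_1)\big) = -\tfrac13 v_1$ — the factors of $3$ cancel, yielding $\dot\alpha(0) = v_0$ and $\dot\alpha(1) = v_1$.

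\textbf{Main obstacle.} The only delicate point is well-posedness, not the computation itself: one must ensure that $\tfrac13 v_0$ and $-\tfrac13 v_1$ lie in the neighborhoods of the origins of $T_{p_0}\mcal$ and $T_{p_1}\mcal$ on which the respective retractions are invertible, so that the cancellations above are legitimate, and that all retractions and inverse retractions implicit in the construction of $\beta$ are defined for the chosen control points. Both are covered by the standing assumption of this section that every use of the retraction and its inverse is well-defined; a quantitative version — valid when $p_0,p_1$ are sufficiently close and $v_0,v_1$ sufficiently small — will follow from the retraction-convexity machinery of Section~\ref{s:retractionConvexSets} (cf. Proposition~\ref{prop:rEndPointRetrCurveWellPosedness}). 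Beyond this, the argument is a one-line substitution, and I do not anticipate any genuine difficulty.
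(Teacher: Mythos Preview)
Your proposal is correct and mirrors the paper's own proof: apply Proposition~\ref{prop:retractionDeCast} (together with Proposition~\ref{prop:generalizedDeCast}) to the control points $(p_0,p_0^+,p_1^-,p_1)$, read off the endpoints from~\eqref{eq:interpolationOfDeCast} and the velocities from~\eqref{eq:derivativeOfRetrDeCast0}, and cancel via $R_x^{-1}\!\circ R_x = \mathrm{id}$. Your remark on well-posedness is appropriate and matches the paper's standing assumption for this section.
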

	\begin{proof}
		The result follows from combining Propositions~\ref{prop:generalizedDeCast} and~\ref{prop:retractionDeCast}:
		\begin{align}
			\alpha(0) &= \beta(0;p_0,p_0^+, p_1^-,p_1) = p_0,\ \alpha(1) = \beta(1;p_0,p_0^+, p_1^-,p_1) = p_1,\\
			\dot\alpha(0) &= \dot\beta(0;p_0,p_0^+, p_1^-,p_1) = 3 R_{p_0}^{-1}(p_0^+) = 3 R_{p_0}^{-1}\pa{R_{p_0}\pa{\frac{1}{3}v_0}} = v_0,\\
			\dot\alpha(1) &= \dot\beta(1;p_0,p_0^+, p_1^-,p_1) = -3 R_{p_1}^{-1}(p_1^-) = -3 R_{p_1}^{-1}\pa{R_{p_1}\pa{-\frac{1}{3}v_1}} = v_1.
		\end{align}  \qed 
	\end{proof}
	
	As an immediate consequence of Proposition~\ref{prop:hermiteRetrBuildingBlock}, the following corollary shows how $\alpha$ is used piecewise to define the \emph{retraction-based Hermite (RH) interpolant} $H$ that addresses Problem~\ref{problem:HermiteInterpolation}.
	\begin{corollary}\label{cor:hermiteRetrFullSolution}
		Letting $h_i := t_{i+1}-t_i$ for $i = 0,\dots,N-1$, the manifold curve $H:\pac{t_0,t_N}\to \mcal$ defined piecewise by 
		\begin{equation}
			\label{eq:fullPiecewiseInterpolant}
			H(t)\restr{\pac{t_{i}, t_{i+1}}} = \alpha\pa{\frac{t-t_i}{h_i};p_i, h_iv_i, p_{i+1}, h_iv_{i+1}},\quad i =0,\dots,N-1,
		\end{equation}
		is a solution to Problem~\ref{problem:HermiteInterpolation}.
	\end{corollary}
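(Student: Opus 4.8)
The statement to prove is Corollary~\ref{cor:hermiteRetrFullSolution}: that the piecewise-defined curve $H$ solves Problem~\ref{problem:HermiteInterpolation}. The plan is to verify the three requirements of Problem~\ref{problem:HermiteInterpolation} in turn: that $H$ is well-defined (no ambiguity at the breakpoints), that $H$ is continuously differentiable on all of $\pac{t_0,t_N}$, and that $H$ satisfies the Hermite conditions $H(t_i)=p_i$, $\dot H(t_i)=v_i$ for every $i$. All three will follow quickly from Proposition~\ref{prop:hermiteRetrBuildingBlock} applied on each subinterval, combined with the chain rule to account for the affine reparametrization $t\mapsto (t-t_i)/h_i$.

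First I would fix $i\in\paa{0,\dots,N-1\}$ and set $H_i(t) := \alpha\pa{(t-t_i)/h_i;\, p_i, h_iv_i, p_{i+1}, h_iv_{i+1}}$ on $\pac{t_i,t_{i+1}}$, so that $H\restr{\pac{t_i,t_{i+1}}} = H_i$ by definition. Since $\alpha$ is smooth in its first argument (it is a composition of retractions, inverse retractions and affine maps, all smooth where defined), each $H_i$ is smooth on $\pac{t_i,t_{i+1}}$. Evaluating at the endpoints via Proposition~\ref{prop:hermiteRetrBuildingBlock} with $(p_0,v_0,p_1,v_1)$ replaced by $(p_i, h_iv_i, p_{i+1}, h_iv_{i+1})$ gives $H_i(t_i) = \alpha(0;\cdots) = p_i$ and $H_i(t_{i+1}) = \alpha(1;\cdots) = p_{i+1}$. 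For the velocities, the chain rule gives $\dot H_i(t) = \tfrac{1}{h_i}\dot\alpha\pa{(t-t_i)/h_i;\cdots}$, hence $\dot H_i(t_i) = \tfrac{1}{h_i}\dot\alpha(0;\cdots) = \tfrac{1}{h_i}(h_iv_i) = v_i$ and similarly $\dot H_i(t_{i+1}) = \tfrac{1}{h_i}\dot\alpha(1;\cdots) = \tfrac{1}{h_i}(h_iv_{i+1}) = v_{i+1}$, again using Proposition~\ref{prop:hermiteRetrBuildingBlock}.

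With these endpoint values in hand, the gluing is immediate: at each interior node $t_i$ with $1\le i\le N-1$, the left piece $H_{i-1}$ and the right piece $H_i$ agree in value, $H_{i-1}(t_i)=p_i=H_i(t_i)$, so $H$ is well-defined and continuous; and they agree in derivative, $\dot H_{i-1}(t_i)=v_i=\dot H_i(t_i)$, so the one-sided derivatives of $H$ match at $t_i$ and $H$ is $C^1$ there. Since each $H_i$ is $C^1$ (indeed smooth) on the open subinterval, $H$ is $C^1$ on all of $\pac{t_0,t_N}$. Finally the Hermite conditions at every node $i=0,\dots,N$ hold: for interior nodes they were just checked from either adjacent piece, and for the two endpoints $t_0,t_N$ they follow from $H_0(t_0)=p_0$, $\dot H_0(t_0)=v_0$ and $H_{N-1}(t_N)=p_N$, $\dot H_{N-1}(t_N)=v_N$.

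This argument is essentially bookkeeping, so there is no serious obstacle; the only point deserving a word of care is the implicit well-posedness assumption, namely that all retractions and inverse retractions invoked in forming each $\alpha$ are defined. Following the convention stated after Definition~\ref{def:endpointRetr}, this holds when consecutive data points $p_i,p_{i+1}$ are sufficiently close (so that $p_i^+,p_{i+1}^-$ and all intermediate de Casteljau points lie in a region where the retraction is invertible); a precise sufficient condition is deferred to the retraction-convexity analysis of Section~\ref{s:retractionConvexSets}. Under that standing assumption the corollary is proved.
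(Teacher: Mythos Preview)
Your proof is correct and follows exactly the route the paper intends: the corollary is stated there as ``an immediate consequence of Proposition~\ref{prop:hermiteRetrBuildingBlock}'' with no separate proof, and you have simply written out the endpoint/derivative bookkeeping (chain rule for the affine reparametrization and $C^1$ gluing at the nodes) that the paper leaves implicit.
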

	
	Algorithms~\ref{alg:offlinePhase} and~\ref{alg:onlinePhase} summarize the construction of $H$ and its evaluation, respectively. We separate the computations needed for evaluating the RH interpolant (online phase) from those that can be precomputed, stored and used in every evaluation (offline phase). 
	
	\begin{algorithm}[ht]
		\caption{Offline Phase (precompute quantities defining the RH interpolant)}\label{alg:offlinePhase}		
		\textbf{Input:} Tangent bundle data points $\paa{(p_i,v_i)}_{i = 0}^N\in T\mcal$, $t_0<t_1<\dots<t_N$.
		\begin{algorithmic}[1]
			\For{$i = 0,\dots,N-1$}
			\State $h_i = t_{i+1} - t_i$;
			\State $p_i^+ = R_{p_i}\pa{\frac{1}{3}h_iv_i}$;
			\State $p_{i+1}^- = R_{p_{i+1}}\pa{-\frac{1}{3}h_{i}v_{i+1}}$;
			\State $q_i = R_{p_i^+}\pa{\frac{1}{2} R_{p_i^+}^{-1}\pa{p_{i+1}^-}}$; \Comment{Anchor for the middle segment $\beta_1$.}
			\State $w_i^{+} = R_{q_i}^{-1}\pa{p_i^+}$; \Comment{Tangent vector from $q_i$ to $p_i^+$}
			\State $w_{i+1}^- = R_{q_i}^{-1}\pa{p_{i+1}^-}$;\Comment{Tangent vector from $q_i$ to $p_{i+1}^-$}
			\EndFor\\
			\Return : $\paa{q_i, w_i^+,w_{i+1}^- }_{i=0}^{N-1}$;
		\end{algorithmic}
	\end{algorithm}
	
	\begin{figure}[ht]
		\centering
		\includegraphics[width=0.8\linewidth]{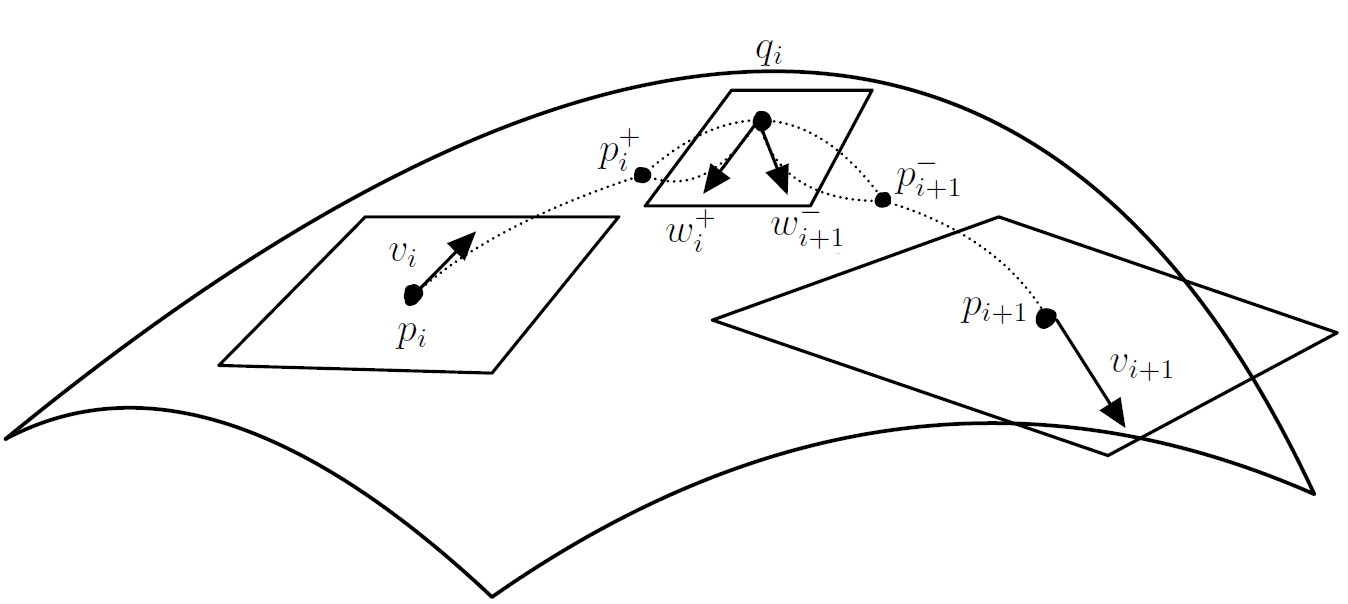}
		\caption{Illustration of offline computations performed by Algorithm~\ref{alg:offlinePhase}. Parallelograms indicate tangent spaces whereas dotted lines indicate retraction curves.}
		\label{fig:offlineCosts}
	\end{figure}
	
	\begin{algorithm}[ht]
		\caption{Online Phase (evaluation of RH interpolant at $t$)}	\label{alg:onlinePhase}		
		\textbf{Input: } $t\in\pac{t_0,t_N}$, $\paa{p_i, v_i, t_i}_{i =0}^N$, $\paa{q_i, w_i^+,w_{i+1}^- }_{i=0}^{N-1}$;
		\begin{algorithmic}[1]
			\State Find largest $i\in\paa{0,\dots,N-1}$ such that $t_i \leq t$;
			\State $h_i = t_{i+1}-t_i$;
			\State $\tau = \frac{t-t_i}{h_i}$;
			\State $\beta_0 = R_{p_{i}}\pa{\frac{\tau}{3}h_iv_i}$;\Comment{$1\times R$}
			\State $\beta_1 = R_{q_{i}}\pa{(1-\tau) w_i^+ + \tau w_{i+1}^-}$; \Comment{$1\times R$}
			\State $\beta_2 = R_{p_{i+1}}\pa{-\frac{\tau}{3}h_{i}v_{i+1}}$; \Comment{$1\times R$}
			\State $\beta_{01} = c_{0}\pa{\tau;\beta_0,\beta_1}$; \Comment{$1\times R + 1\times R^{-1}$}
			\State $\beta_{12} = c_{1}\pa{\tau;\beta_1,\beta_2}$; \Comment{$1\times R + 1\times R^{-1}$}\\
			\Return $\beta = c_\tau\pa{\tau;\beta_{01},\beta_{12}}$;  \Comment{$2\times R + 3\times R^{-1}$}
		\end{algorithmic}
	\end{algorithm}
	
	\section{Retraction-convex sets} \label{s:retractionConvexSets}
	
	The notion of an $r$-endpoint retraction curve $c_r$ from Definition~\ref{def:endpointRetr}
	is the central ingredient of our RH interpolation. To guarantee that the (inverse) retractions involved in $c_r$ and, in turn, $c_r$ itself are well-defined, we introduce the notion of a retraction-convex set.
	
	The definition of retraction-convex set is made possible by the local rigidity property of retractions ensuring the existence of their local inverse. The following property~\cite[Corollary 10.27]{boumalBook} conveniently characterizes the subset of the retraction's domain over which the inverse retraction can be defined.
	\begin{proposition}\label{prop:retractionDiffeom}
		For any retraction $R$ there exists a continuous function $\Delta:\mcal \to \papc{0,\infty}$ defining a subset $\tcal := \paa{(x,v)\in T\mcal :  \|v\|_x<\Delta(x)}$ of the tangent bundle such that 
		\begin{equation}
			\map{E}{\tcal \subset T\mcal}{\mcal\times\mcal}{(x,v)}{(x,R_x(v))}
		\end{equation}
		is a diffeomorphism.
	\end{proposition}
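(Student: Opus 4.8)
The statement is Boumal's Corollary 10.27, so the proof is essentially an application of the inverse function theorem combined with a continuity/size-of-neighborhood argument. First I would fix an arbitrary $x\in\mcal$ and examine the differential of $E$ at $(x,0_x)$. Using the identification $T_{(x,0_x)}T\mcal \cong T_x\mcal \oplus T_x\mcal$ (vertical $\oplus$ horizontal at the zero section) and the identities $R_x(0)=x$, $\D R_x(0)=\mathrm{I}_{T_x\mcal}$ from the definition of a retraction, one computes that $\D E(x,0_x)$ is an isomorphism: its matrix is block triangular with the identity on the diagonal blocks. Hence by the inverse function theorem $E$ is a local diffeomorphism near $(x,0_x)$, so for each $x$ there is some radius $\rho(x)>0$ and a neighborhood $U_x\ni x$ such that $E$ restricts to a diffeomorphism from $\{(y,v): y\in U_x,\ \|v\|_y<\rho(x)\}$ onto its image.

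Second, I would patch these local statements together into a single continuous function $\Delta$. The natural candidate is
\begin{equation}
	\Delta(x) := \sup\paa{\rho>0 : E \text{ restricts to a diffeomorphism on } \paa{(y,v): d(x,y)<\rho,\ \|v\|_y<\rho}},
\end{equation}
allowing the value $+\infty$. The previous paragraph shows $\Delta(x)>0$ for every $x$. One then checks that $\Delta$ is continuous — in fact $1$-Lipschitz where finite — by the usual argument: if $d(x,x')<\varepsilon$ then any ball of the defining type around $x'$ of radius $\rho$ is contained in the corresponding ball around $x$ of radius $\rho+\varepsilon$, giving $\Delta(x)\ge \Delta(x')-\varepsilon$ and symmetrically $\Delta(x')\ge\Delta(x)-\varepsilon$. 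With $\tcal := \paa{(x,v)\in T\mcal: \|v\|_x<\Delta(x)}$, injectivity of $E$ on $\tcal$ follows because any two points of $\tcal$ lie in a common ball on which $E$ is injective, and $E$ is a local diffeomorphism at every point of $\tcal$ by construction; an injective local diffeomorphism is a diffeomorphism onto its (open) image.

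Third, one should verify that $\tcal$ is open — immediate from continuity of $\Delta$ and of $(x,v)\mapsto\|v\|_x$ — and that the image $E(\tcal)$ is open in $\mcal\times\mcal$, which follows since $E$ is an open map on $\tcal$ (being a local diffeomorphism there). The smoothness of the inverse is automatic from the local picture.

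**Main obstacle.** The conceptually routine but technically delicate point is the global patching: upgrading the pointwise "$\exists\rho(x)>0$" from the inverse function theorem to a genuine continuous (indeed Lipschitz) function $\Delta$, and then arguing that $E$ is globally injective on the resulting $\tcal$ rather than merely locally so. One must be a little careful that the "ball" $\paa{(y,v): d(x,y)<\rho,\ \|v\|_y<\rho}$ on which local diffeomorphy is claimed is genuinely a neighborhood in $T\mcal$ and that shrinking $\rho$ preserves the diffeomorphism property, so that the supremum defining $\Delta$ is over a nonempty down-set. Everything else — the differential computation and the openness claims — is a direct unwinding of definitions. Since this is a known result, I would simply cite \cite[Corollary 10.27]{boumalBook} and sketch only the differential computation at the zero section, which is the part that uses the defining properties of a retraction.
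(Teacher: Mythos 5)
The paper does not actually prove this proposition; it states it and points to \cite[Corollary 10.27]{boumalBook}, which is the same reference you appeal to, so there is no in-paper argument to compare against and your sketch must be assessed on its own. The overall strategy you lay out — inverse function theorem at the zero section using $R_x(0)=x$ and $\D R_x(0)=\mathrm{I}_{T_x\mcal}$, a supremum definition of $\Delta$, a Lipschitz estimate to get continuity, and openness of $\tcal$ and of $E(\tcal)$ — is the right one and most of it is fine.

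There is, however, a genuine flaw in the injectivity step. You assert that ``any two points of $\tcal$ lie in a common ball on which $E$ is injective.'' That is false in general: take $(x_1,v_1),(x_2,v_2)\in\tcal$ with $d(x_1,x_2)$ large (for instance $\mcal$ compact and $d(x_1,x_2)$ close to the diameter). Any set of the form $\paa{(y,v):d(x,y)<\rho,\ \|v\|_y<\rho}$ containing both base points forces $2\rho>d(x_1,x_2)$, and there is no reason $\Delta(x)$ should ever exceed $d(x_1,x_2)/2$. The observation you never make, and which is the actual engine of global injectivity, is that the first component of $E$ is the identity on $\mcal$: $E(x_1,v_1)=E(x_2,v_2)$ already forces $x_1=x_2$. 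Once the base points coincide, your down-set argument does the job: pick $\rho$ with $\max\pa{\|v_1\|_x,\|v_2\|_x}<\rho<\Delta(x)$, so that both $(x,v_1)$ and $(x,v_2)$ lie in the radius-$\rho$ ball around $x$ on which $E$ is a diffeomorphism, giving $v_1=v_2$. With that one-line fix the argument closes; the $1$-Lipschitz estimate for $\Delta$ (including the $+\infty$ case) and the openness of $\tcal$ and $E(\tcal)$ are correct as you wrote them.
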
 
	
	Accordingly, if $v\in T_x\mcal$ is such that $\norm{v}_x<\Delta(x)$, then the retraction is well-defined. Likewise, whenever $x,y\in \mcal$ are such that $(x,y)\in E(\tcal)$, we say that the inverse retraction $R_x^{-1}(y)$ is well-defined and is equal to the unique $v\in T_x\mcal$ such that $E(x,v) = (x,y)$. Moreover, with this definition the inverse retraction is smooth both in $x$ and $y$.

	\begin{definition}\label{def:retrConvexSet}
		A set $\ucal\subset\mcal$ is said to be \emph{retraction-convex} if for any $x,y,z\in\ucal$ 
		\begin{enumerate}[label=(\roman*)]
			\item the inverse retraction $R_x^{-1}(y)$ is well-defined, \label{item:invRetrIsDef}
			\item $R_x((1-\tau)R_x^{-1}(y) + \tau R_x^{-1}(z))$ is well-defined for every $\tau\in\pac{0,1}$ and belongs to $\ucal$, \label{item:endPointRetrIsInSet}
		\end{enumerate}
	\end{definition}
	An equivalent characterization of retraction-convexity is that the inverse retraction $R_x^{-1}$ is well-defined on $\ucal$ and the image of $\ucal$ through the inverse retraction is a convex subset of $T_x\mcal$ for every $x\in\ucal$. Retraction-convex sets generalize geodesically convex sets \cite[\S 6]{leeRiemManifs2}: instead of the geodesic it is the endpoint retraction curve between two points that remains in the set.

	\begin{proposition}\label{prop:rEndPointRetrCurveWellPosedness}
		Let $\ucal$ be retraction-convex and $x,y\in \ucal$. Then the $r$-endpoint retraction curve $t\mapsto c_r(t;x,y)$ is well-defined for every $t,r\in\pac{0,1}$.
	\end{proposition}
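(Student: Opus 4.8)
The plan is to unwind Definition~\ref{def:endpointRetr} and verify, one retraction/inverse-retraction at a time, that every operation appearing in $c_r(t;x,y)$ falls within the region where it is well-defined, using only the two defining properties of a retraction-convex set. Concretely, $c_r(t;x,y) = R_{q(r)}\!\big((1-t)R_{q(r)}^{-1}(x) + t R_{q(r)}^{-1}(y)\big)$ with $q(r) = R_x\!\big(r R_x^{-1}(y)\big)$ involves, in order: the inverse retraction $R_x^{-1}(y)$; the anchor point $q(r)$; the two inverse retractions $R_{q(r)}^{-1}(x)$ and $R_{q(r)}^{-1}(y)$; and finally the endpoint retraction curve based at $q(r)$ evaluated at $(1-t)R_{q(r)}^{-1}(x)+tR_{q(r)}^{-1}(y)$.

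First, since $x,y\in\ucal$, Definition~\ref{def:retrConvexSet}\ref{item:invRetrIsDef} guarantees that $R_x^{-1}(y)$ is well-defined, so $q(r)$ is at least a well-posed expression. The crucial step is to show $q(r)\in\ucal$ for every $r\in\pac{0,1}$. To this end I would apply Definition~\ref{def:retrConvexSet}\ref{item:endPointRetrIsInSet} to the triple $x,y,x\in\ucal$ (i.e.\ with $z=x$): since $R_x^{-1}(x)=0$ by local rigidity, the expression $R_x\!\big((1-\tau)R_x^{-1}(x) + \tau R_x^{-1}(y)\big) = R_x\!\big(\tau R_x^{-1}(y)\big)$ is well-defined and lies in $\ucal$ for every $\tau\in\pac{0,1}$; taking $\tau=r$ gives $q(r)\in\ucal$.

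With $q(r)\in\ucal$ established, all three points $q(r),x,y$ lie in $\ucal$. Applying Definition~\ref{def:retrConvexSet}\ref{item:invRetrIsDef} to the pairs $(q(r),x)$ and $(q(r),y)$ shows that $R_{q(r)}^{-1}(x)$ and $R_{q(r)}^{-1}(y)$ are well-defined, hence so is the tangent vector $(1-t)R_{q(r)}^{-1}(x)+tR_{q(r)}^{-1}(y)$ for every $t\in\pac{0,1}$. Finally, applying Definition~\ref{def:retrConvexSet}\ref{item:endPointRetrIsInSet} to the triple $q(r),x,y\in\ucal$ yields that $R_{q(r)}\!\big((1-t)R_{q(r)}^{-1}(x)+tR_{q(r)}^{-1}(y)\big)=c_r(t;x,y)$ is well-defined (and in fact lies in $\ucal$) for every $t\in\pac{0,1}$, which is the claim.

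I do not anticipate a genuine obstacle here: the statement is essentially a bookkeeping exercise matching the ingredients of $c_r$ against the clauses of retraction-convexity. The only point requiring a small observation rather than a direct quotation of the definition is membership $q(r)\in\ucal$, which is handled by the degenerate case $z=x$ together with $R_x^{-1}(x)=0$; everything else is an immediate invocation of Definition~\ref{def:retrConvexSet}. One may also remark, as a byproduct, that the whole curve $t\mapsto c_r(t;x,y)$ stays inside $\ucal$, though this is not needed for the stated conclusion.
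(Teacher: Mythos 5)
Your proof is correct and follows essentially the same route as the paper: establish $R_x^{-1}(y)$ via clause (i), show $q(r)\in\ucal$ via clause (ii) with the degenerate third point $x$ (the paper phrases this as $z=x$ and $\tau=1-r$, you effectively reparametrize the same observation), then apply (i) and (ii) once more at the anchor $q(r)$. The only nit is a notational slip where you say ``with $z=x$'' but then write the formula with the roles of the second and third arguments of Definition~\ref{def:retrConvexSet}\ref{item:endPointRetrIsInSet} swapped; the substance is unaffected.
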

	\begin{proof}
		By Definition~\ref{def:retrConvexSet}~\ref{item:invRetrIsDef}, $R_x^{-1}(y)$ is well-defined. Now using Definition~\ref{def:retrConvexSet}~\ref{item:endPointRetrIsInSet} with $z = x$ and $\tau = 1-r$, we find that $q(r) = R_x(r R_x^{-1}(y))$ is well-defined and belongs to $\ucal$. Therefore, the inverse retractions $R_{q(r)}^{-1}(x)$ and $R_{q(r)}^{-1}(y)$ are also well-defined and the proof is completed by applying once again Definition~\ref{def:retrConvexSet}~\ref{item:endPointRetrIsInSet}. \qed
	\end{proof}

	\begin{proposition}\label{prop:generalizedDeCastRetrCurveWellPosedness}
		For a retraction-convex set $\ucal$ consider control points $b_0, b_1, b_2, b_3 \in \ucal$.
		Then the generalized de Casteljau curve $t \mapsto \beta(t;b_0,b_1,b_2,b_3)$ constructed in Definition~\ref{def:genCasteljau} is well defined for every $t\in\pac{0,1}$.
	\end{proposition}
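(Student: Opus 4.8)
The plan is to peel off the recursive construction of $\beta(\cdot;b_0,b_1,b_2,b_3)$ from Proposition~\ref{prop:retractionDeCast} (with the specific choice~\eqref{eq:bestChoiceOfr1r01r12r012}) one layer at a time, using retraction-convexity to guarantee at each stage that the intermediate curves stay inside $\ucal$ so that the next layer can be formed. The only preliminary observation I would record is that the conclusion of Proposition~\ref{prop:rEndPointRetrCurveWellPosedness} can be strengthened at no extra cost: its proof ends with an application of Definition~\ref{def:retrConvexSet}~\ref{item:endPointRetrIsInSet} to the base point $q(r)\in\ucal$ and the points $x,y\in\ucal$, which also yields $c_r(t;x,y)\in\ucal$. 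So, for $\ucal$ retraction-convex, $x,y\in\ucal$ and $r,t\in\pac{0,1}$, the point $c_r(t;x,y)$ is well-defined \emph{and} lies in $\ucal$.

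With this in hand the proof is a short bootstrap over the three levels of the generalized de Casteljau recursion. First, applying the strengthened statement to the pairs $(b_0,b_1)$, $(b_1,b_2)$, $(b_2,b_3)\subset\ucal$ with anchor parameters $0$, $1/2$, $1$ shows that $\beta_0(t)=c_0(t;b_0,b_1)$, $\beta_1(t)=c_{1/2}(t;b_1,b_2)$, $\beta_2(t)=c_1(t;b_2,b_3)$ are well-defined and belong to $\ucal$ for every $t\in\pac{0,1}$. Second, fixing $t$ and applying the statement to the pairs $(\beta_0(t),\beta_1(t))$ and $(\beta_1(t),\beta_2(t))$ (both in $\ucal$) with parameters $0$ and $1$ shows that $\beta_{01}(s,t)=c_0(s;\beta_0(t),\beta_1(t))$ and $\beta_{12}(s,t)=c_1(s;\beta_1(t),\beta_2(t))$ are well-defined and lie in $\ucal$ for every $s\in\pac{0,1}$. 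Third, fixing $s,t$ and applying it once more to $(\beta_{01}(s,t),\beta_{12}(s,t))\subset\ucal$ with anchor parameter $r=t\in\pac{0,1}$ shows that $\beta_{012}(u,s,t)=c_t(u;\beta_{01}(s,t),\beta_{12}(s,t))$ is well-defined for every $u\in\pac{0,1}$. Specializing to $u=s=t$ gives that $\beta(t)=\beta_{012}(t,t,t)$ is well-defined for every $t\in\pac{0,1}$, which is the claim.

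I do not expect a genuine obstacle here: the statement is essentially a structural consequence of the definition of retraction-convexity together with Proposition~\ref{prop:rEndPointRetrCurveWellPosedness}. The single point demanding care — and the one place a sloppy argument would break — is that each level of the recursion requires the \emph{endpoints} fed into the next endpoint retraction curve to lie in $\ucal$, not merely to be joinable by a well-defined inverse retraction; this is precisely why one needs the "belongs to $\ucal$" half of Definition~\ref{def:retrConvexSet}~\ref{item:endPointRetrIsInSet}, and why it is worth making the strengthened form of Proposition~\ref{prop:rEndPointRetrCurveWellPosedness} explicit before starting the induction.
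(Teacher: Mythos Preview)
Your argument is correct and matches the paper's proof essentially line for line: both recursively apply Proposition~\ref{prop:rEndPointRetrCurveWellPosedness} (strengthened to include membership in $\ucal$) through the three layers $\beta_0,\beta_1,\beta_2 \to \beta_{01},\beta_{12} \to \beta_{012}$. You are more explicit about the anchor parameters and more careful to flag why the ``belongs to $\ucal$'' clause is indispensable, but the strategy is identical.
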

	\begin{proof}
		We apply recursively the result of Proposition~\ref{prop:rEndPointRetrCurveWellPosedness}. Since $b_0,b_1,b_2,b_3\in\ucal$, the curves $\beta_0,\beta_1,\beta_2$ are well-defined and their image is entirely contained in $\ucal$. In turn, this implies that for every $t\in\pac{0,1}$ the curves $\beta_{01}(\cdot,t)$ and $\beta_{12}(\cdot, t)$ are well-defined and their image is entirely contained in $\ucal$. Finally, it follows that for every $s,t\in\pac{0,1}^2$ the curve $\beta_{012}(\cdot, s,t)$ is well-defined. \qed
	\end{proof}
	Note that this proof is valid for any generalized de Casteljau curve constructed as in Proposition~\ref{prop:retractionDeCast}.
	
	\subsection{Existence of retraction-convex sets}
	
	In the special case that the retraction is the exponential map, retraction-convexity coincides with geodesic convexity. The existence of geodesically convex neighborhoods around any manifold point is shown for instance by do Carmo \cite[Proposition 4.2]{doCarmo}.\footnote{Note that~\cite{doCarmo} uses the term strongly convex set instead of geodesically convex set.} The proof is based on the Gauss lemma~\cite[Lemma 3.5]{doCarmo}, a result from Riemannian geometry that crucially relies on the exponential map which does not extend to general retraction curves. Instead, our proof will only use properties of the manifold distance function, sharing common features with the proof \cite[Theorem 3.3.7]{doCarmo} that shows the existence of totally retractive neighborhoods as defined in~\cite[Section 3.3]{symmetricRankOneTR}.
	
	\begin{theorem}\label{theo:retrConvexExistence}
		For every $x\in\mcal$ there exists $\bar \rho > 0$ such that for any $\rho < \bar\rho$, the manifold ball $B(x,\rho)$ defined in~\eqref{eq:manifoldball} is retraction-convex. 
	\end{theorem}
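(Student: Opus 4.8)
The plan is to mimic the classical argument for the existence of geodesically convex balls (do Carmo, Proposition 4.2), but replacing every appeal to the Gauss lemma and the exponential map with estimates that only involve the retraction, its inverse, and the Riemannian distance. The key object is the scalar function $u_x(t) := d\big(x_0, c_r(t;x,y)\big)^2$ (or rather its analogue along the endpoint retraction curve), and we want to show it is strictly convex in $t$ on a small enough ball. Concretely, I would fix $x\in\mcal$, choose a totally normal / totally retractive neighborhood $W$ of $x$ as guaranteed by Proposition~\ref{prop:retractionDiffeom} (so that $R$ and $R^{-1}$ are smooth diffeomorphisms on the relevant part of $T\mcal$ over $W$), and work with radii small enough that all retraction curves between points of the ball stay inside $W$. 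The first step is therefore purely a well-posedness step: pick $\bar\rho_1$ so small that for $\rho<\bar\rho_1$, condition~\ref{item:invRetrIsDef} of Definition~\ref{def:retrConvexSet} holds on $B(x,\rho)$ and all the endpoint retraction curves $c_r(\cdot;y,z)$ with $y,z\in B(x,\rho)$ are defined; this follows from continuity of $\Delta$ in Proposition~\ref{prop:retractionDiffeom} and compactness of closed sub-balls.

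The heart of the argument is a second-derivative estimate. Consider the map $f(t) := d\big(x, R_y((1-t)R_y^{-1}(p) + tR_y^{-1}(q))\big)^2$ for $p,q$ near $x$; more precisely one wants to control $\frac{d^2}{dt^2}\, d(x,\cdot)^2$ restricted to an endpoint retraction curve. At $t$ with base point $x$ itself, i.e. for the function $F(y) := d(x,y)^2$, we have $F(x)=0$, $\grad F(x)=0$, and $\mathrm{Hess}\,F(x) = 2\,\mathrm{I}_{T_x\mcal}$ (this is standard and uses only that $d(x,\cdot)$ behaves like the norm to second order — it does follow from the exponential map's properties at the point $x$ but needs no Gauss lemma away from $x$). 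By continuity of the Hessian of $F$ (which is smooth on $W\setminus\{x\}$ and extends continuously at $x$ with value $2\mathrm I$), there is a smaller neighborhood on which $\mathrm{Hess}\,F \succeq \mathrm I$, say. Then along any smooth curve $\eta(t)$,
\[
\frac{d^2}{dt^2} F(\eta(t)) = \mathrm{Hess}\,F(\eta(t))[\dot\eta(t),\dot\eta(t)] + \big\langle \grad F(\eta(t)),\, \tfrac{D}{dt}\dot\eta(t)\big\rangle.
\]
For $\eta = c_r(\cdot;y,z)$ an endpoint retraction curve the first term is $\ge \|\dot\eta\|^2 > 0$, but the second term is an error term: $\grad F$ is $O(\rho)$ on $B(x,\rho)$ (since it vanishes at $x$), and the acceleration $\frac{D}{dt}\dot\eta$ is bounded uniformly in terms of the second derivatives of the retraction on $W$ and the size of $R_y^{-1}(p), R_y^{-1}(q)$, which are themselves $O(\rho)$. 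So the error term is $O(\rho^2)$ while the main term is bounded below by a positive constant times $\|R_y^{-1}(z) - R_y^{-1}(x)\|^2$-type quantities that are comparable to $d(y,z)^2$. Choosing $\bar\rho$ small enough that the error is dominated, we conclude $t\mapsto F(c_r(t;y,z))$ is strictly convex (in fact $\frac{d^2}{dt^2}>0$) whenever $y,z\in B(x,\rho)$ and $\rho < \bar\rho$.

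From strict convexity of $F\circ c_r$ the conclusion is routine: given $y,z\in B(x,\rho)$, the function $t\mapsto F(c_r(t;y,z))$ has $F(c_r(0))=d(x,y)^2<\rho^2$ and $F(c_r(1))=d(x,z)^2<\rho^2$, and being strictly convex it cannot exceed $\max\{d(x,y)^2, d(x,z)^2\}<\rho^2$ on $[0,1]$; hence $c_r(t;y,z)\in B(x,\rho)$ for all $t\in[0,1]$, which is condition~\ref{item:endPointRetrIsInSet} (taking $r=0$, or more precisely the convex-combination map, which is the $r=0$ instance up to the anchor; one applies this with base point varying as needed, exactly as in Proposition~\ref{prop:rEndPointRetrCurveWellPosedness}). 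Together with the well-posedness from step one, this shows $B(x,\rho)$ is retraction-convex. I expect the main obstacle to be the uniform control of the acceleration $\frac{D}{dt}\dot\eta$ of the endpoint retraction curve and the accompanying bookkeeping: one must express everything through the second-order Taylor data of $R$ on the compact neighborhood $W$, verify the $O(\rho)$ bounds on $R_y^{-1}(x)$, $R_y^{-1}(z)$ uniformly in the base point, and make sure the continuity-of-Hessian argument for $F$ near the (a priori singular) point $x$ is handled cleanly — this last point being where the proof genuinely differs from the geodesic case, since we cannot invoke the Gauss lemma to get the clean expression for $\mathrm{Hess}\,d(x,\cdot)^2$ along a geodesic and must instead use its continuity and the known value $2\mathrm I$ at $x$.
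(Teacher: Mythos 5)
Your proposal is correct and reaches the same conclusion, but it organizes the convexity estimate differently from the paper, and the difference is worth noting. The paper introduces the single function $f(y,v) = d(x, R_y(v))^2$ on the tangent bundle, computes (via Lemma~\ref{lem:secondDerivSquareDistanceFunction}) that its Euclidean Hessian in the $v$-variable is $2\,\mathrm{I}$ at $(x,0)$, and then simply invokes smoothness of $f$ near $(x,0)$ to conclude $\nabla_T^2 f(y,v)$ stays positive definite on a neighborhood $V$; since the path $t\mapsto (1-t)R_y^{-1}(w)+tR_y^{-1}(z)$ is a literal straight line in $T_y\mcal$, convexity of $f(y,\cdot)$ on the (convex) tangent-space slice of a suitably chosen chart box immediately gives the max-at-endpoints property, and the remaining work is purely the topological bookkeeping of producing a box $W$ with convex slices and a $\bar\rho$ with $B(x,\rho)\times B(x,\rho)\subseteq E(W\cap\tcal)$. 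What you do instead is expand the same second derivative via the chain rule applied to $F = d(x,\cdot)^2$ composed with the retraction curve $\eta$, separating $\frac{d^2}{dt^2}F(\eta)$ into $\mathrm{Hess}\,F[\dot\eta,\dot\eta]$ plus the cross term $\langle\grad F,\tfrac{D}{dt}\dot\eta\rangle$, and then estimating each piece (Hessian bounded below near $x$ by continuity; $\|\grad F\|=O(\rho)$ since it vanishes at $x$; $\|\tfrac{D}{dt}\dot\eta\|$ controlled by second derivatives of $R$ and quadratically in the tangent vectors). This is exactly the content of the paper's positive-definiteness statement unbundled, since $\nabla_T^2 f(y,v)[a,a] = \mathrm{Hess}\,F(R_y(v))[\D R_y(v)a,\D R_y(v)a] + \langle\grad F(R_y(v)),\D^2 R_y(v)[a,a]\rangle$. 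Both routes work; the paper's one-shot continuity argument is cleaner because it sidesteps the need for a uniform acceleration bound on retraction curves and a quantitative $\|\grad F\|=O(\rho)$ estimate, and it also makes the potential-singularity worry you raise at the end moot, since smoothness of $f$ near $(x,0)$ follows directly from smoothness of the squared Riemannian distance near the diagonal (Lee, Lemma~6.8). Your version trades that cleanliness for a more explicit perturbative estimate, which is perfectly valid but asks for more careful bookkeeping; in particular you would still need to carry out the chart/box argument that the paper does to guarantee the tangent-space segment stays in the region of controlled Hessian, a point you acknowledge but leave implicit.
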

	Given a $\bar\rho>0$ satisfying Theorem~\ref{theo:retrConvexExistence}, then the property is also satisfied by $\eta \bar\rho$, for any $\eta\in\pa{0,1}$. Thus, to fix a single value for each $x\in\mcal$, we denote 
	\begin{equation}\label{eq:rhoBarOfx}
		\bar\rho(x) := \sup\paa{\bar\rho>0 : B(x,\rho)\text{ is retraction-convex } \forall \rho<\bar\rho}.
	\end{equation} 
	
	The proof of Theorem~\ref{theo:retrConvexExistence} uses the following lemma.
	\begin{lemma}\label{lem:secondDerivSquareDistanceFunction}
		For every $x\in\mcal$ and any $v\in T_x\mcal$ we have
		\begin{equation}\label{eq:secondDerivativeOfRetractionDistance}
			\ddt{^2}{t^2} d(x,R_x(tv))^2\restr{t = 0} = 2\norm{v}_x^2.
		\end{equation}
	\end{lemma}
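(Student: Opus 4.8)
The plan is to reduce the statement to an elementary computation in the fixed inner-product space $T_x\mcal$ by composing with the Riemannian exponential, which on a normal neighborhood identifies the squared distance with a squared norm.

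Fix $x\in\mcal$ and $v\in T_x\mcal$ and set $f(t):=d(x,R_x(tv))^2$. First I would pick a normal neighborhood $\mathcal N$ of $x$: on such a neighborhood $\Exp_x$ is a diffeomorphism from a star-shaped open neighborhood of $0\in T_x\mcal$ onto $\mathcal N$, and the defining property of normal neighborhoods (radial geodesics are minimizing) gives $d(x,\Exp_x(w))^2=\norm{w}_x^2$ for every such $w$. Since $R_x(tv)\to x$ as $t\to0$ and $R$ is continuous, there is $\varepsilon>0$ with $R_x(tv)\in\mathcal N$ for all $t\in(-\varepsilon,\varepsilon)$. Hence $w(t):=\Exp_x^{-1}(R_x(tv))$ defines a smooth curve in $T_x\mcal$ on $(-\varepsilon,\varepsilon)$ with $w(0)=0$, and $f(t)=\norm{w(t)}_x^2$; in particular $f$ is smooth near $0$, so $f''(0)$ is meaningful.

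Next I would identify $\dot w(0)$. Differentiating the identity $R_x(tv)=\Exp_x(w(t))$ at $t=0$ and using the chain rule gives $\D R_x(0)[v]=\D\Exp_x(0)[\dot w(0)]$. Local rigidity of the retraction means $\D R_x(0)=\mathrm{I}_{T_x\mcal}$, and the standard identity $\D\Exp_x(0)=\mathrm{I}_{T_x\mcal}$ then forces $\dot w(0)=v$. Finally, writing $f(t)=\scalp{w(t)}{w(t)}_x$ and differentiating twice in the vector space $T_x\mcal$ yields $\dot f(t)=2\scalp{\dot w(t)}{w(t)}_x$ and $\ddot f(t)=2\scalp{\ddot w(t)}{w(t)}_x+2\norm{\dot w(t)}_x^2$; evaluating at $t=0$ and using $w(0)=0$ together with $\dot w(0)=v$ gives $\ddot f(0)=2\norm{v}_x^2$, as claimed.

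I do not expect a genuine obstacle: the only point requiring care is the smoothness of $t\mapsto f(t)$ near $0$, which is handled by the observation that $R_x(tv)$ stays in a normal neighborhood of $x$ for small $t$, where $d(x,\cdot)^2$ is smooth and equals $\norm{\Exp_x^{-1}(\cdot)}_x^2$. An entirely equivalent alternative is to run the same computation in normal coordinates centered at $x$, where $d(x,\cdot)^2$ is the sum of squares of the coordinates and local rigidity of $R$ pins down the first-order term of the coordinate representation of $t\mapsto R_x(tv)$.
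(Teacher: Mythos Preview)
Your proof is correct and essentially identical to the paper's: both write $d(x,R_x(tv))^2=\norm{\Log_x(R_x(tv))}_x^2$ on a normal neighborhood, differentiate this squared norm twice in $T_x\mcal$, and use $\Log_x(x)=0$ together with local rigidity of $R_x$ and $\D\Exp_x(0)=\mathrm{I}_{T_x\mcal}$ to obtain $2\norm{v}_x^2$. The only cosmetic difference is that you name the intermediate curve $w(t)=\Exp_x^{-1}(R_x(tv))$ explicitly, whereas the paper applies the chain rule directly to $\Log_x\circ c$.
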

	\begin{proof}
		Setting $c(t):=R_x(tv)$, it follows
		from~\cite[Proposition 6.11]{leeRiemManifs2} that
		\begin{equation}
			d(x,c(t))^2 = \norm{\Log_x(c(t))}_x^2
		\end{equation}
		for sufficiently small $|t|$. This allows us to explicitly compute 
		\begin{align}
			\ddt{}{t}d(x,c(t))^2 = 2\scalp{\D \Log_x(c(t))[\dot c(t)]}{\Log_x(c(t))}_x,
		\end{align}
		and 
		\begin{equation}
			\begin{aligned}
				\ddt{^2}{t^2}d(x,c(t))^2 = 2 \langle & \D^2\Log_x(c(t))\pac{\dot c(t),\dot c(t)} +  \D\Log_x(c(t))[\ddot c(t)], \Log_x(c(t))\rangle_x\\
				+ &2\norm{\D\Log_x(c(t))[\dot c(t)]}_x^2.
			\end{aligned}
		\end{equation}
		Using that $c(0) = x$, the first term vanishes at $t = 0$ because $\Log_x(c(0)) = \Log_x(x) = 0$. Using
		local rigidity, $\D\Log_x(x) = I_{T_x\mcal}$,
		%(which follows from the of the Riemannian exponential map~\cite[Proposition 5.19]{leeRiemManifs2})
		and $\dot c(0) = v$ completes the proof:
		$
		\ddt{^2}{t^2} d(x,c(t))^2\restr{t = 0} = 2\norm{\D\Log_x(x)[v]}_x^2 = 2\norm{v}_x^2.
		$ \qed
	\end{proof}
	
	\begin{proof}(of Theorem~\ref{theo:retrConvexExistence})
		Given $x\in\mcal$, consider the function $f:T\mcal\to \Rbb$
		\begin{equation*}
			f(y,v) = d(x, R_y(v))^2.
		\end{equation*}
		The squared Riemannian distance function is smooth on a neighborhood of the diagonal of $\mcal\times\mcal$~\cite[Lemma 6.8]{leeRiemManifs2} and by definition the retraction $R$ is smooth on its domain (a neighborhood of the zero section of $T\mcal$). Therefore the function $f$ is smooth on an open neighborhood of $(x,0)$ in $T\mcal$.

		We will first establish a (local) convexity property of $f$.
		Using $\nabla_T^2f(x,w)$ to denote the (Euclidean) Hessian of $f(x,\cdot)$ at $w \in T_x\mcal$, the result of Lemma~\ref{lem:secondDerivSquareDistanceFunction} can be rephrased as
		\begin{equation}\label{eq:hessOfF}
			\scalp{\nabla_T^2f(x,0)\pac{v}}{v}_x = 2\norm{v}_x^2.
		\end{equation}
		In particular, this shows that $\nabla_T^2f(x,0)$ is positive definite. 

		By smoothness, there exists 
		a neighborhood $V\subset T\mcal$ containing $(x,0)$ such that 
		$\nabla_T^2f(y,v)$ remains positive definite at every $(y,v)\in V$.	

		For an arbitrary subset $S\subseteq T\mcal$, we let 
		\begin{equation}
			\begin{aligned}
				\piOnM(S) :=& \paa{y\in\mcal\,:\, \exists\, v\in T_y\mcal \text{ such that } (y,v)\in S },\\
				\label{eq:piOnT}\piOnT{y}(S) :=& \paa{v\in T_y\mcal\,:\, (y,v)\in S }.		
			\end{aligned}
		\end{equation}
		
		For every $y\in\piOnM(V)$ the function $f(y,\cdot)$ is convex on any convex subset of $\piOnT{y}(V)$.
		
		The set $V$ is an open set in the atlas topology of $T\mcal$, associated with the natural smooth structure of $T\mcal$~\cite[Lemma 4.1]{leeSmoothManifs1}. This means that for any local chart $\pa{\ucal, \phi}$ of $\mcal$, the set $\psi_\phi\pa{\piOnM^{-1}(\ucal)\cap V}$ is an open subset of $\Rbb^{2D}$, where 
		\begin{equation}
			\piOnM^{-1}(\ucal) = \paa{(y,v)\in T\mcal\,:\, y\in\ucal,\, v\in T_y\mcal} %= \underset{y\in \ucal}{\coprod} T_y\mcal
		\end{equation}  
		and 
		\begin{equation}
			\begin{aligned}
				\map{\psi_\phi}{\piOnM^{-1}(\ucal)\cap V}{\Rbb^{2D}}{(y,v)}{(\phi(y),\lambda)}
			\end{aligned}
		\end{equation}
		with $\lambda\in\Rbb^D$ being the coordinates of $v$ in the basis of $T_y\mcal$ formed by the partial derivatives of the inverse local charts, that is $v = \sum_{i = 1}^{D} \lambda_i \partial_i\phi^{-1}(\phi(y))$.
		
		Now consider a local chart such that $x\in\ucal$. Because $\psi_\phi\pa{\piOnM^{-1}(\ucal)\cap V}$ is an open subset of $\Rbb^{2D}$ that contains $\pa{\phi(x),0}$, there exist $\varepsilon_{\mcal}>0$ and $\varepsilon_T>0$ such that 
		\begin{equation}
			B_D(\phi(x),\varepsilon_{\mcal})\times B_D(0,\varepsilon_T)\subseteq \psi_\phi\pa{\piOnM^{-1}(\ucal)\cap V},
		\end{equation}
		where $B_D(\cdot,\cdot)$ denotes an open ball (in the Euclidean norm) of $\Rbb^D$. By continuity of $\psi_\phi$, its preimage is an open subset of $V$ that contains $(x,0)$:
		\begin{equation}
			\begin{aligned}
				W :=& \psi_\phi^{-1}(B_D(\phi(x),\varepsilon_{\mcal})\times B_D(0,\varepsilon_T))\\
				=& \paa{(y,v)\in T\mcal: y\in \phi^{-1}\pa{B_D(\phi(x),\varepsilon_{\mcal})},\, v = \sum_{i = 1}^D\lambda_i\partial_i\phi^{-1}(\phi(y)), \,\norm{\lambda}_2 < \varepsilon_T}.
			\end{aligned}
		\end{equation}
		From Proposition~\ref{prop:retractionDiffeom}, we have that $E$ is a diffeomorphism on $W\cap \tcal$ (which is open in $T\mcal$) and so the set $E(W\cap\tcal)$ is an open neighborhood of $(x,x)$. Thus, the constant 
		\begin{equation} \label{eq:barrho}
			\bar\rho := \sup \paa{\rho \geq 0: B(x,\rho)\times B(x,\rho)\subseteq E(W\cap\tcal)}.
		\end{equation}
		is strictly positive.
		
		The statement of the theorem is proven by showing that the set $B(x,\rho)$ is retraction-convex
		for any $\rho < \bar\rho$. Definition~\ref{def:retrConvexSet}~(i) follows from the invertibility of $E$ on $B(x,\rho)\times B(x,\rho)$: For any $y,z\in B(x,\rho)$ there exists a unique $v\in T_y\mcal$ such that $E(y,v)=(y,z)$ and, hence, $v = R^{-1}_y(z)$ is well-defined. 
		To establish Definition~\ref{def:retrConvexSet}~(ii), consider arbitrary $w,y,z\in B(x,\rho)$. We first note that the set 
		\begin{equation}
			\piOnT{y}(W\cap\tcal) = \paa{v\in T_y\mcal: v = \sum_{i = 1}^D\lambda_i\partial_i\phi^{-1}(\phi(y)), \,\norm{\lambda}_2 < \varepsilon_T, \norm{v}_y<\Delta(y)}
		\end{equation} 
		is convex (as the intersection of two convex sets) and in the domain of $R_y$. Both $R_y^{-1}(w)$ and $R_y^{-1}(z)$ are contained in $\piOnT{y}(W\cap\tcal)$ and hence the same holds for their convex linear combination. The convexity of $f(y,\cdot)$ on $\piOnT{y}(W\cap\tcal)$, a convex subset of $\piOnT{y}(V)$, implies for every $t\in\pac{0,1}$ that
		\begin{align}
			d\big(x, R_y((1-t)R_y^{-1}(w) + t R_y^{-1}(z))\big)^2 &=
			f\big(y, (1-t)R_y^{-1}(w) + t R_y^{-1}(z)\big) \\
			&\leq (1-t)f\big(y, R_y^{-1}(w) \big) + t f\big(y, R_y^{-1}(z)\big)\\
			&= (1-t) d(x,w)^2 + t d(x,z)^2 < \rho^2,
		\end{align}
		which proves the retraction-convexity of $B(x,\rho)$. \qed
	\end{proof} 
	
	\section{Analysis of RH interpolation} \label{s:analysis}
	
	For the purpose of deriving qualitative and asymptotic properties of RH interpolation, we suppose that the interpolation data are samples of a continuously differentiable manifold curve ${\gamma:\pac{0,1}\to\mcal}$, that is
	\begin{equation}\label{eq:equispacedInterpProblem}
		p_i = \gamma(t_i),\quad v_i =\dot\gamma(t_i), \quad \forall i = 0,\dots,N,
	\end{equation} 
	for some $0=t_0<t_1<\dots<t_N = 1$. Because of its piecewise definition (see Corollary~\ref{cor:hermiteRetrFullSolution}), it is sufficient to consider the RH interpolant on a single subinterval, i.e. the manifold curve $H_h:\pac{t,t+h}\to \mcal$ defined by
	\begin{equation} \label{eq:hh}
		H_h(\tau) = \alpha\pa{\frac{\tau-t}{h};\gamma(t),h\dot\gamma(t), \gamma(t+h), h\dot\gamma(t+h)}
	\end{equation}
	for sufficiently small $h > 0$ and $t\in\pac{0,1-h}$, satisfying $H_h(t) = \gamma(t)$, $H_h(t + h) = \gamma(t+h)$,
	$\dot H_h(t) = \dot\gamma(t)$, and $\dot H_h(t + h) = \dot \gamma(t+h)$.
	Our results for a single interval apply to the piecewise solution of~\eqref{eq:equispacedInterpProblem} by letting $h=\max_{i = 0,\dots,N-1}  t_{i+1} - t_i$. 
	
	\subsection{Lipschitz continuity of the retraction}\label{ss:lipschitzContinuityOfRetraction}
	
	Lemma~\ref{lem:lipschRetrAndInvRetr} below states the Lipschitz continuity of (inverse) retractions. A very similar statement can be found in~\cite[Lemma 6]{ringWirth}, under a local equicontinuity assumption of the retraction derivatives. Our proof, reported in Appendix~\ref{app:lipschitzContinuityOfRetraction} together with the proof of Corollary~\ref{cor:lipschitzRetrOnCompact}, leverages  Proposition~\ref{prop:retractionDiffeom} to avoid this assumption.

	\begin{lemma}\label{lem:lipschRetrAndInvRetr}
		For every $x\in\mcal$, there exist $L_R(x)>0$ and $M_R(x)>0$ such that
		\begin{enumerate}[label=(\roman*)]
			\item \label{cor:lipschRetr} for any $u,v\in\paa{w\in T_x\mcal : \norm{w}_x < \Delta(x) / 3}$ we have
			\begin{equation}\label{eq:retrLipschitzCont}
				d(R_x(u), R_x(v))\leq L_R(x) \|u-v\|_x,
			\end{equation}
			\item \label{cor:lipschInvRetr} for any $y,z\in \paa{R_x(w) \in \mcal: \norm{w}_x < \Delta(x) / 3}$ we have
			\begin{equation}\label{eq:invRetrLipschitzCont}
				\norm{R_x^{-1}(y)- R_x^{-1}(z)}_x\leq M_R(x) d(y,z),
			\end{equation} 
		\end{enumerate}
		with $\Delta(x)>0$ defined in Proposition~\ref{prop:retractionDiffeom}. 
	\end{lemma}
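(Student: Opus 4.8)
The plan is to exploit Proposition~\ref{prop:retractionDiffeom}, which tells us that the map $E(x,v)=(x,R_x(v))$ is a diffeomorphism on the open neighborhood $\tcal=\{(x,v): \|v\|_x<\Delta(x)\}$ of the zero section. The key idea is that on a small enough neighborhood of any point, both $E$ and $E^{-1}$ are $C^1$, hence locally Lipschitz, and the two claimed inequalities are then essentially local Lipschitz bounds for $R_x$ and $R_x^{-1}$ restricted to a fixed tangent space. The reason for the factor $1/3$ in the radius is to leave room: we want a \emph{closed} ball of radius $\Delta(x)/3$ (or a slightly larger one) to sit compactly inside the open set $\tcal$, so that continuity of the relevant derivatives upgrades to a uniform (hence Lipschitz) bound on that compact set.

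First I would fix $x\in\mcal$ and set $r=\Delta(x)/3$. For part~\ref{cor:lipschRetr}: restrict $R_x$ to the closed ball $\overline{B_{T_x\mcal}(0,2r)}\subset T_x\mcal$, which is compact and contained in the open domain of $R_x$ (since $2r<\Delta(x)$). On this compact set the smooth map $v\mapsto R_x(v)$ has bounded differential; together with a standard argument — integrating $\|\D R_x\|$ along the straight segment $t\mapsto (1-t)u+tv$, which stays inside the convex ball $\overline{B_{T_x\mcal}(0,2r)}$ when $u,v$ lie in $B_{T_x\mcal}(0,r)$ — one gets a curve on $\mcal$ joining $R_x(u)$ and $R_x(v)$ whose length is at most $\big(\sup_{w}\|\D R_x(w)\|\big)\|u-v\|_x$. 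Since $d$ is the infimum of lengths of joining curves, this length bounds $d(R_x(u),R_x(v))$, giving~\eqref{eq:retrLipschitzCont} with $L_R(x)$ the supremum of the operator norm of $\D R_x$ over $\overline{B_{T_x\mcal}(0,2r)}$. (One subtlety: the length functional requires the velocity norm $\|\cdot\|_{R_x(w)}$ at the moving base point, not $\|\cdot\|_x$; this is handled because the map $w\mapsto R_x(w)$ sweeps out a compact subset of $\mcal$ and the metric tensor is continuous, so the pointwise norms are comparable by a uniform constant that can be absorbed into $L_R(x)$.)

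For part~\ref{cor:lipschInvRetr}: the set $\{R_x(w): \|w\|_x<r\}$ is the image under $E(x,\cdot)$ of $B_{T_x\mcal}(0,r)$, and by Proposition~\ref{prop:retractionDiffeom} the inverse map $y\mapsto R_x^{-1}(y)$ is well-defined and smooth there. To get a Lipschitz bound in the Riemannian distance $d$, take $y,z$ in this set and let $\delta:[0,1]\to\mcal$ be a curve joining them with length close to $d(y,z)$; since $R_x^{-1}$ is smooth, the differential of $R_x^{-1}$ is bounded on the compact set $\overline{\{R_x(w):\|w\|_x\le r\}}$ (which sits inside the open set where $R_x^{-1}$ is defined — again the factor-$1/3$ slack), so $\|R_x^{-1}(y)-R_x^{-1}(z)\|_x\le \big(\sup\|\D R_x^{-1}\|\big) L(\delta)$, and taking the infimum over $\delta$ yields~\eqref{eq:invRetrLipschitzCont} with $M_R(x)$ the supremum of $\|\D R_x^{-1}\|$ over that compact set. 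A small technical point: one must check the curve $\delta$ can be taken to stay inside the domain of $R_x^{-1}$; this holds for $y,z$ close enough, and since both inequalities are asserted only for $u,v$ (resp.\ $y,z$) in the stated balls of radius $r=\Delta(x)/3$, one can first shrink to guarantee connectedness of the relevant region by short curves — alternatively, one works in the compact closed ball of radius $r$ whose $R_x$-image is path-connected and contained in the open domain, using that $\mcal$ near $x$ is covered by such a chart-like neighborhood.

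The main obstacle is the bookkeeping between the moving-base-point norm in the length functional and the fixed norm $\|\cdot\|_x$ on $T_x\mcal$: converting operator-norm bounds for $\D R_x$ (a linear map $T_x\mcal\to T_{R_x(v)}\mcal$) into a length bound, and then into a $d$-bound, requires a uniform comparison of the metric at nearby points, which is exactly what compactness of the relevant $R_x$-image provides. Everything else is a routine application of the mean value / fundamental theorem of calculus along straight segments in the (convex) tangent-space balls, combined with the diffeomorphism property from Proposition~\ref{prop:retractionDiffeom}.
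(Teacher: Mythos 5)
The plan for part (i) is essentially the paper's own argument: push the straight segment $\tau\mapsto u+\tau(v-u)$ through $R_x$, bound the length of the resulting curve on $\mcal$ using a uniform bound on $\D_2 R_x$ over a compact subset of $T_x\mcal$, and note that the Riemannian distance is bounded by this length. Your remark about comparing the moving-base-point norm $\|\cdot\|_{R_x(w)}$ to $\|\cdot\|_x$ is correctly resolved by compactness, and in fact the paper packages this directly into the constant $L_2$ of Proposition~\ref{prop:lipschRetr}, where the operator norm of $\D_2 R_x(u)\colon T_x\mcal\to T_{R_x(u)}\mcal$ is already measured with the appropriate norms on source and target. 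So part (i) is fine.

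Part (ii) has a genuine gap, and you yourself flag the spot where it sits. You take an almost-minimizing curve $\delta$ joining $y,z$, bound $\|\D R_x^{-1}\|$ over the compact set $\overline{\ical_x^{1/3}}$, integrate, and take the infimum over $\delta$. But for this integral estimate to make sense you need the entire curve $\delta$ to lie in the set over which you have the sup bound, and nothing guarantees that near-minimizing curves between $y$ and $z$ stay inside $\ical_x^{1/3}$, or even inside $\ical_x$ (the domain of $R_x^{-1}$). Because $R_x$ is an arbitrary retraction, $\ical_x^{1/3}$ is not a Riemannian ball and is not known to be geodesically convex, so there is no a priori reason the Riemannian-length minimizers are trapped there. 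Your proposed fixes do not close this: ``for $y,z$ close enough'' does not cover the whole ball of radius $\Delta(x)/3$ that the lemma asserts; ``shrink to guarantee connectedness by short curves'' is not available because the radius $\Delta(x)/3$ is fixed by the statement; and ``work in the compact closed ball whose $R_x$-image is path-connected'' does not help because the infimum defining $d(y,z)$ runs over curves in all of $\mcal$, not curves constrained to that image.

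The missing idea, which the paper supplies, is an exit-case argument using a larger buffer. One takes a sequence $\gamma_k$ of curves from $y$ to $z$ with $L(\gamma_k)\to d(y,z)$ and distinguishes two cases. If $\gamma_k$ stays inside $\ical_x^{2/3}$ (note: $2/3$, not $1/3$; the sup of $\|\D_2 R_x^{-1}\|$ must be taken over the larger compact set $\overline{\ical_x^{2/3}}$), the integral argument you outline goes through. If $\gamma_k$ leaves $\ical_x^{2/3}$, then since $y,z\in\ical_x^{1/3}$ one has the crude bound $\|R_x^{-1}(y)-R_x^{-1}(z)\|_x\le 2\Delta(x)/3$, while the pulled-back curve $q=R_x^{-1}\circ\gamma_k$ must cross the spherical annulus in $T_x\mcal$ between radii $\Delta(x)/3$ and $2\Delta(x)/3$ outward and back before leaving, so the portion of its length that is controlled already exceeds $2\Delta(x)/3$; chaining these yields the same inequality $\|R_x^{-1}(y)-R_x^{-1}(z)\|_x\le M_2(x,2/3)\,L(\gamma_k)$ without ever needing $\gamma_k$ to stay inside. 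Taking the infimum over $k$ then finishes. Without this case distinction your bound for (ii) does not hold as stated, so the proof is incomplete.
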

	\begin{corollary}\label{cor:lipschitzRetrOnCompact}
		For every compact set $K\subset \mcal$, inequalities \eqref{eq:retrLipschitzCont} and \eqref{eq:invRetrLipschitzCont} hold for every $x\in K$ with finite strictly positive constants $L_R(K)$ and $M_R(K)$ depending only on $K$.
	\end{corollary}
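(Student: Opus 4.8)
The plan is to uniformize, over the compact base $K$, the pointwise bounds of Lemma~\ref{lem:lipschRetrAndInvRetr} by taking suprema of operator norms over \emph{compact disk-bundles} built on $K$. The structural fact enabling this is that $\Delta$ (from Proposition~\ref{prop:retractionDiffeom}) is continuous, so that for $\lambda\in(0,1)$ the set $\mathcal D_\lambda:=\{(x,w)\in\tcal:x\in K,\ \norm{w}_x\le\lambda\,\Delta(x)\}$ is compact: it is closed in $T\mcal$, and covering $K$ by finitely many charts on whose relatively compact domains the Riemannian norm is comparable to the coordinate norm shows that $\mathcal D_\lambda$ is contained in a compact set; likewise the continuous image $E(\mathcal D_\lambda)\subset\mcal\times\mcal$ is compact.

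For inequality~\eqref{eq:retrLipschitzCont}: given $u,v$ with $\norm{u}_x,\norm{v}_x<\Delta(x)/3$, I bound $d(R_x(u),R_x(v))$ by the length of the curve $t\mapsto R_x((1-t)u+tv)$, which stays in the convex ball of radius $\Delta(x)/3$; that length is at most $\norm{u-v}_x$ times $L_R(K):=\sup_{\mathcal D_{1/3}}\operatornorm{\D R_x(w)}$. This supremum is finite because $(x,w)\mapsto\operatornorm{\D R_x(w)}$ is continuous on $\tcal$ (joint continuity of $\D R$ and of the metric coefficients) and $\mathcal D_{1/3}$ is compact; it is positive since $\D R_x(0)=\mathrm I$.

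For inequality~\eqref{eq:invRetrLipschitzCont}: here $y,z\in R_x(\{\norm{w}_x<\Delta(x)/3\})$, and I split on $d(y,z)$, using the uniform threshold $\theta(K):=\min\{d(R_x(w_1),R_x(w_2)):x\in K,\ \norm{w_1}_x\le\Delta(x)/3,\ \norm{w_2}_x=\Delta(x)/2\}$, a strictly positive minimum over a compact set since the integrand is continuous and never vanishes ($R_x$ being injective on $\{\norm{w}_x<\Delta(x)\}$, with $w_1\ne w_2$). If $d(y,z)\ge\theta(K)$ I bound the left-hand side crudely by $\norm{R_x^{-1}(y)}_x+\norm{R_x^{-1}(z)}_x<\tfrac23\Delta(x)\le\tfrac23\max_{x\in K}\Delta(x)$. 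If $d(y,z)<\theta(K)$, I pick a piecewise-smooth path $\delta$ from $y$ to $z$ with $L(\delta)<\theta(K)$; then $\delta$ cannot exit the slightly larger image ball $\Omega_x:=R_x(\{\norm{w}_x<\Delta(x)/2\})$, for otherwise (by a first-exit argument, using $\overline{\Omega_x}\subseteq R_x(\{\norm{w}_x\le\Delta(x)/2\})$) it would meet $R_x(\{\norm{w}_x=\Delta(x)/2\})$ at a point $z'$, giving $L(\delta)\ge d(y,z')\ge\theta(K)$, a contradiction. Since $R_x^{-1}$ is $C^1$ on $\Omega_x$, integrating $\D R_x^{-1}$ along $\delta$ yields $\norm{R_x^{-1}(y)-R_x^{-1}(z)}_x\le M_0\,L(\delta)$, with $M_0:=\sup\{\operatornorm{\D R_x^{-1}(u)}:(x,u)\in E(\mathcal D_{1/2})\}<\infty$ (smoothness of the inverse retraction in both arguments, Proposition~\ref{prop:retractionDiffeom}). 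Letting $L(\delta)\downarrow d(y,z)$ and combining the two regimes gives $M_R(K):=\max\{M_0,\ \tfrac23\max_{x\in K}\Delta(x)/\theta(K)\}$.

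The heart of the argument is the \emph{compactness bookkeeping}: verifying that the disk-bundles $\mathcal D_\lambda$ and their images under $E$ are genuinely compact, and — the real crux — that the escape threshold $\theta(K)$ is strictly positive, so that short paths between nearby points stay in a region where $R_x^{-1}$ is controlled. Working with near-minimizing paths rather than minimizing geodesics is a deliberate choice that avoids invoking completeness of $\mcal$ or an injectivity-radius estimate. An alternative route — setting $L_R(K)=\max_{x\in K}L_R(x)$ and $M_R(K)=\max_{x\in K}M_R(x)$ — would also work, but only after checking that the pointwise constants produced in the proof of Lemma~\ref{lem:lipschRetrAndInvRetr} are upper semicontinuous in $x$; the approach above is self-contained.
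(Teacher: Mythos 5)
Your proof is correct, and for the harder half --- the inverse-retraction bound~\eqref{eq:invRetrLipschitzCont} --- it takes a genuinely different route from the paper's. The paper deduces the corollary by citing Proposition~\ref{prop:lipschitzRetrOnCompactGeneral}, which uniformizes the operator-norm constants $L_1,L_2,M_1,M_2$ over the compact disk bundles $\overline{\tcal_K^\varepsilon}$ (compactness is established in Lemma~\ref{lem:compactSubsetsOfTcal} via a parallel-transport argument), and then recalling that the proof of Lemma~\ref{lem:lipschRetrAndInvRetr} produces $L_R(x)=L_2(x,1/3)$ and $M_R(x)=M_2(x,2/3)$ as explicit polynomials in those constants. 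In Lemma~\ref{lem:lipschRetrAndInvRetr} the delicate point --- near-minimizing paths from $y$ to $z$ need not stay where $R_x^{-1}$ is controlled --- is handled by an \emph{annulus-traversal} argument: if a competitor $\gamma_k$ escapes $\ical_x^{2/3}$, its tangent-space pullback $q=R_x^{-1}\circ\gamma_k$ must cross the shell between radii $\Delta(x)/3$ and $2\Delta(x)/3$ in both directions, so $2\Delta(x)/3 \le M_2\,L(\gamma_k)$, which already dominates the triangle-inequality bound $\norm{R_x^{-1}(y)-R_x^{-1}(z)}_x\le 2\Delta(x)/3$. You replace this with a different dichotomy: you extract a single uniform escape threshold $\theta(K)>0$ (positivity by injectivity of $R_x$ and compactness), use a crude norm bound when $d(y,z)\ge\theta(K)$, and for $d(y,z)<\theta(K)$ show by a first-exit argument that competitor paths cannot leave $\ical_x^{1/2}$, after which one integrates $\D_2R_x^{-1}$ directly. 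Both routes address the same obstruction; yours is more self-contained (it does not pass through a per-$x$ statement) at the cost of introducing $\theta(K)$ and verifying the first-exit step, while the paper's factors the difficulty into a clean per-$x$ lemma plus a compactness uniformization. For the disk-bundle compactness you use a standard chart-covering argument in place of the paper's parallel-transport construction, and for the forward bound~\eqref{eq:retrLipschitzCont} your argument is essentially the paper's.
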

	\begin{proof}
		This is a consequence of the more general Proposition~\ref{prop:lipschitzRetrOnCompactGeneral}, see Appendix~\ref{app:lipschitzContinuityOfRetraction}. \qed
	\end{proof}

	\subsection{Well-posedness of RH interpolation} \label{ss:wellPosedness}
	Throughout the two upcoming sections we make the assumption that the function ${x\in\mcal\to\bar\rho(x)}$ defined by $\eqref{eq:rhoBarOfx}$ is lower-bounded on the image of the curve $\gamma$ by a strictly positive constant that we denote
	\begin{equation}
		\label{eq:rhoMin}
		\rho_{\min} := \inf_{\tau\in\pac{0,1}}{\bar\rho(\gamma(\tau))}>0.
	\end{equation}

	Since retractions are smooth, intuition suggests that the function $\bar\rho$ should be continuous, thereby guaranteeing the validity of this technical assumption.

	By Proposition~\ref{prop:generalizedDeCastRetrCurveWellPosedness}, the RH interpolation scheme is well-defined provided that the control points belong to a retraction-convex set. The following result is established that this indeed the case for sufficiently small $h$.

	\begin{proposition}\label{prop:wellDefinitessOfRHOfCurve} There exists a constant $ h_1 >0$ depending on the curve $\gamma$ and on the retraction such that for any $0<h< h_1$ the RH interpolant $H_h$ defined in~\eqref{eq:hh} is well-posed for every $t\in\pac{0,1-h}$.
	\end{proposition}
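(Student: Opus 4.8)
The plan is to show that, for $h$ sufficiently small, all four control points $p_i, p_i^+, p_{i+1}^-, p_{i+1}$ used to build $H_h$ on $\pac{t,t+h}$ lie inside a single retraction-convex ball, so that Proposition~\ref{prop:generalizedDeCastRetrCurveWellPosedness} applies uniformly in $t$. The natural candidate is a ball $B(\gamma(t), \rho)$ with $\rho < \rho_{\min}$, which is retraction-convex by Theorem~\ref{theo:retrConvexExistence} and the definition~\eqref{eq:rhoBarOfx} of $\bar\rho$. So the whole argument reduces to a quantitative estimate: the four control points stay within distance $\rho$ of $\gamma(t)$ once $h$ is small.

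First I would fix a compact set containing a neighborhood of the image $\gamma(\pac{0,1})$ — for instance, using that $\gamma$ is continuous on a compact interval, one can take a closed tubular-type set, or more simply invoke that the image itself is compact and work on a slightly enlarged compact neighborhood $K$ on which Corollary~\ref{cor:lipschitzRetrOnCompact} furnishes uniform Lipschitz constants $L_R(K)$, $M_R(K)$ and a uniform lower bound on $\Delta$. Then I would estimate the three "new" control points. By definition $p_{i+1} = \gamma(t+h)$, and since $\gamma$ is $C^1$ on a compact interval, $d(\gamma(t),\gamma(t+h)) \le L_\gamma h$ for a uniform constant $L_\gamma = \sup_\tau \norm{\dot\gamma(\tau)}_{\gamma(\tau)}$ (this is just the length bound). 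For $p_i^+ = R_{p_i}(\tfrac{h}{3}\dot\gamma(t))$: since $R_{p_i}(0) = p_i = \gamma(t)$, the Lipschitz bound~\eqref{eq:retrLipschitzCont} gives $d(\gamma(t), p_i^+) \le L_R(K)\cdot \tfrac{h}{3}\norm{\dot\gamma(t)}_{\gamma(t)} \le \tfrac{1}{3}L_R(K) L_\gamma h$, provided $\tfrac{h}{3} L_\gamma < \Delta(x)/3$, which holds for small $h$ uniformly on $K$. The point $p_{i+1}^- = R_{p_{i+1}}(-\tfrac{h}{3}\dot\gamma(t+h))$ is within $\tfrac{1}{3}L_R(K)L_\gamma h$ of $\gamma(t+h)$, hence within $(L_\gamma + \tfrac{1}{3}L_R(K)L_\gamma)h$ of $\gamma(t)$ by the triangle inequality. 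Choosing $h_1$ so that $\max\{L_\gamma,\ \tfrac{1}{3}L_R(K)L_\gamma,\ (1+\tfrac{1}{3}L_R(K))L_\gamma\}\, h_1 < \rho_{\min}$ (and also small enough that all the retractions invoked above are in their domains of definition on $K$) puts all four control points in $B(\gamma(t),\rho)$ for some $\rho < \rho_{\min}$, and Proposition~\ref{prop:generalizedDeCastRetrCurveWellPosedness} then guarantees $H_h$ is well-posed on that subinterval.

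The main obstacle is making the compactness/uniformity bookkeeping clean: one must ensure that a single compact set $K$ simultaneously (a) contains $\gamma(\pac{0,1})$ with room to spare so that all control points — which wander $O(h)$ away from the curve — stay inside it, (b) carries the uniform Lipschitz constants of Corollary~\ref{cor:lipschitzRetrOnCompact}, and (c) has $\inf_{x\in K}\Delta(x) > 0$ so that every retraction and inverse retraction used in constructing $p_i^\pm$ and the ball $B(\gamma(t),\rho)$ is genuinely well-defined. A slightly delicate point is that $\rho_{\min}$ is defined via $\bar\rho(\gamma(\tau))$ along the curve, so the retraction-convex ball must be centered at a curve point $\gamma(t)$, not at an arbitrary control point — hence the triangle-inequality reduction above is phrased with all distances measured from $\gamma(t)$. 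With $K$ chosen as, say, the union of closed balls $\overline{B}(\gamma(\tau), \rho_{\min}/2)$ over $\tau \in \pac{0,1}$ (compact as a continuous image, once one checks closedness, or simply take any compact neighborhood of the image), all of this goes through, and the constant $h_1$ is explicit in terms of $L_\gamma$, $L_R(K)$, $\rho_{\min}$ and $\inf_K \Delta$.
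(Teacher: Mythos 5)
Your proof is correct and follows essentially the same route as the paper: bound the Riemannian distance from $\gamma(t)$ to each of the four control points by $Qh$ with $Q=(1+\tfrac13 L_R)L_\gamma$, force this inside the retraction-convex ball $B(\gamma(t),\rho_{\min})$ by shrinking $h$, and invoke Proposition~\ref{prop:generalizedDeCastRetrCurveWellPosedness}. One small simplification you can make: the enlarged compact neighborhood $K$ is unnecessary, since every Lipschitz bound you use (for $p_i^+$, $p_{i+1}^-$) is taken at a base point lying \emph{on} $\gamma([0,1])$, so Corollary~\ref{cor:lipschitzRetrOnCompact} applied directly to the compact image of $\gamma$ already gives the uniform constants $L_R$ and $\Delta_{\min}$ that you need.
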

	\begin{proof}
		By Proposition~\ref{prop:hermiteRetrBuildingBlock} and Corollary~\ref{cor:hermiteRetrFullSolution},
		\[
		H_h(\tau) 
		= \beta\pa{\frac{\tau-t}{h};p_0(t),p_0^+(t,h),p_1^-(t,h),p_1(t,h)},
		\]
		with the control points
		\begin{align}
			p_0(t) &:=  \gamma(t),\quad &&p_1(t,h), := \gamma(t+h), \\
			p_0^+(t,h)&:= R_{\gamma(t)}\pa{{h\dot\gamma(t)}/{3}},\quad &&p_1^-(t,h) := R_{\gamma(t+h)}\pa{-{h\dot\gamma(t+h)}/{3}}.
		\end{align}
		
		Denoting by $L_\gamma$ a Lipschitz constant of the curve $\gamma$ on $\pac{0,1}$ we have
		\begin{equation*}		
			d(p_0(t),p_1(t,h)) = d(\gamma(t),\gamma(t+h)) \leq L_\gamma h,
		\end{equation*}	
		The function $\Delta(\cdot)$ defined in Proposition~\ref{prop:retractionDiffeom}, is continuous and strictly positive so the quantity
		$\Delta_{\min} = \min_{\tau\in\pac{0,1}}\Delta(\gamma(\tau))$ is strictly positive. If $h<\frac{\Delta_{\min}}{L_\gamma}$, then 
		\begin{align*}
			\norm{{h\dot\gamma(t)}/3}_{\gamma(t)} < {\Delta(\gamma(t))}/3, \quad \norm{{h\dot\gamma(t+h)}/3}_{\gamma(t+h)} < {\Delta(\gamma(t+h))}/3.
		\end{align*}	
		This allows us to invoke  Lemma~\ref{lem:lipschRetrAndInvRetr}-\ref{cor:lipschRetr} with the constant $L_R(\gamma) := \sup_{t\in\pac{0,1}}\paa{ L_R\pa{\gamma(t)}}$. This constant is finite by Corollary~\ref{cor:lipschitzRetrOnCompact}.
		We have 
		\begin{align*}
			d(p_0(t),p_0^+(t,h)) &\leq hL_R \norm{\dot\gamma(t) / 3}_{\gamma(t)} \leq h L_RL \gamma/3,\\
			d(p_0(t),p_1^-(t,h)) &\leq d(p_0(t),p_1(t,h)) + d(p_1(t,h),p_1^-(t,h)) \\
			&\leq L_\gamma h +   hL_R{\norm{\dot\gamma(t+h) / 3}_{\gamma(t+h)}} \leq L_\gamma(1 + L_R/3) h.		
		\end{align*} 
		Hence, if $h<\frac{\Delta_{\min}}{L_\gamma}$, then all control point of $H_h$ are contained in $B(\gamma(t),Qh)$ with
		\begin{equation*}
			Q := \max\paa{L_\gamma,\,L_RL_\gamma/3 ,\, L_\gamma(1 + L_R/3)} = L_\gamma(1 + L_R/3).
		\end{equation*}
		Therefore by taking $h <  h_1 := \min\paa{\frac{\Delta_{\min}}{L_\gamma},\frac{\rho_{\min}}{Q}}$ all the control points of $H_h$ are contained in the retraction-convex set $B(\gamma(t), \rho_{\min})$. This implies the curve $H_h$ is well-defined. \qed
	\end{proof}
	
	\subsection{Interpolation error}
	In the Euclidean setting, asymptotic convergence rates of the maximum interpolation error are classic results of numerical analysis. For the case of piecewise cubic Hermite interpolation the error can be shown to converge as $O(h^4)$ \cite[Section 8.4]{quarteroniBook}, where $h$ is the largest sampling step size. In this section, we generalize this result to the RH interpolation scheme. 
	\begin{theorem} \label{theo:convergenceOrder}
		Let $\gamma\in C^4(\pac{0,1})$ and consider $H_h$, the RH interpolant of $\gamma$ on a subinterval $[t,t+h]$ as defined in \eqref{eq:hh}. Assume that $\gamma$ satisfies \eqref{eq:rhoMin} and that for $k=2,3,4$ there exist constants $L_{RH}^{(k)}>0$ and $h_2>0$ such that for every $0<h< h_2 $ and any $t\in\pac{0,1-h}$  it holds that 
		\begin{equation}\label{eq:boundednessOfKthDerivOfHh}
			\supOn{\tau\in\pac{t,t+h}}\norm{\Ddt{^{k}H_h(\tau)}{\tau^k}}_{H_h(\tau)} < L_{RH}^{(k)}, 
		\end{equation} 
		where $\Ddt{^k}{\tau^k}$ denotes the order-$k$ covariant derivative along a curve~\cite[Sec. 10.7]{boumalBook}. Then, there exist constants $\kappa>0$ and $\bar h>0$ depending on the curve $\gamma$, the manifold and the retraction such that for any $0<h< \bar h$ and any $t\in\pac{0,1-h}$, we have
		\begin{equation}
			\underset{\tau\in\pac{t,t+h}}{\max} d(\gamma(\tau), H_h(\tau)) < \revision{\kappa} h^4.
		\end{equation}
	\end{theorem}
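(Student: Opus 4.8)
The plan is to transport the problem to a single tangent space, reduce it to the classical Euclidean error bound for piecewise cubic Hermite interpolation, and then transport the error estimate back to the manifold. First I would fix $t$ and $h$ small enough (at least $h<h_1$ from Proposition~\ref{prop:wellDefinitessOfRHOfCurve} and $h<h_2$), so that both $\gamma$ and $H_h$ restricted to $[t,t+h]$ have images inside the retraction-convex ball $B(\gamma(t),\rho_{\min})$; this is exactly the well-posedness step already carried out, and it also gives me that the inverse retraction $R_{\gamma(t)}^{-1}$ is a smooth diffeomorphism on that ball. Working in a fixed chart/orthonormal frame of $T_{\gamma(t)}\mcal \cong \Rbb^D$, I define the two Euclidean curves $g(\tau) := R_{\gamma(t)}^{-1}(\gamma(\tau))$ and $\tilde H(\tau) := R_{\gamma(t)}^{-1}(H_h(\tau))$ on $[t,t+h]$.

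The key observation is that $\tilde H$ is (close to) the classical Euclidean piecewise-cubic Hermite interpolant of $g$ on $[t,t+h]$. Indeed, the generalized de Casteljau curve $\beta(\cdot;b_0,b_1,b_2,b_3)$ built from $r$-endpoint retraction curves has the property that its image under $R_{\gamma(t)}^{-1}$ is close to the genuine cubic Bézier curve through $R_{\gamma(t)}^{-1}(b_0),\dots,R_{\gamma(t)}^{-1}(b_3)$ — the only discrepancy comes from the fact that the intermediate retractions are anchored at points other than $\gamma(t)$, which introduces an $O(h)$ perturbation in the relevant second-order data and hence (by the structure of de Casteljau recursion and local rigidity $\D R_x(0)=\mathrm{I}$) an $O(h^{?})$ perturbation of the curve. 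The cleaner route, and the one I would actually pursue, is to bound the error directly via Taylor expansion: by \eqref{eq:boundednessOfKthDerivOfHh} with $k=2,3,4$ and the matching endpoint/velocity conditions $H_h(t)=\gamma(t)$, $\dot H_h(t)=\dot\gamma(t)$, $H_h(t+h)=\gamma(t+h)$, $\dot H_h(t+h)=\dot\gamma(t+h)$, the coordinate curve $\tilde H$ has its first four derivatives controlled (the $k=2,3,4$ bounds transfer from covariant derivatives to ordinary derivatives in the chart up to a constant depending on Christoffel symbols, which are bounded on the compact ball $\overline{B(\gamma(t),\rho_{\min})}$ uniformly in $t$ by Corollary~\ref{cor:lipschitzRetrOnCompact}-type compactness arguments). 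Likewise $g\in C^4$ with derivatives bounded uniformly in $t$ since $\gamma\in C^4([0,1])$. Then $\tilde H$ and $g$ agree to first order at both endpoints of $[t,t+h]$, so the standard divided-difference / Rolle argument for Hermite interpolation gives, componentwise,
\begin{equation}
 |g_j(\tau) - \tilde H_j(\tau)| \le \frac{1}{4!}\Big(\supOn{\xi\in[t,t+h]}|g_j^{(4)}(\xi)| + \supOn{\xi\in[t,t+h]}|\tilde H_j^{(4)}(\xi)|\Big)\, |(\tau-t)(\tau-(t+h))|^2/4,
\end{equation}
wait — more carefully, the textbook bound is $|g_j-\tilde H_j|\le \frac{M_j}{4!}(\tau-t)^2(t+h-\tau)^2$ where $M_j$ bounds $|g_j^{(4)}-\tilde H_j^{(4)}|$; since $(\tau-t)^2(t+h-\tau)^2\le h^4/16$ on $[t,t+h]$, we get $\|g(\tau)-\tilde H(\tau)\|_2 \le \sqrt D\,\frac{\kappa'}{4!}\,\frac{h^4}{16}$ for a constant $\kappa'$ absorbing $\max_j M_j$.

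Finally I would transport back: since $R_{\gamma(t)}^{-1}$ is bi-Lipschitz on $\overline{B(\gamma(t),\rho_{\min})}$ with constants uniform in $t$ (combining Lemma~\ref{lem:lipschRetrAndInvRetr}, Corollary~\ref{cor:lipschitzRetrOnCompact}, and compactness of $\gamma([0,1])$), we have $d(\gamma(\tau),H_h(\tau)) \le C\,\|g(\tau)-\tilde H(\tau)\|_2$, and folding $C$ and $\kappa'$ and the factor $1/16$ into a single constant $\kappa$ yields $\max_{\tau\in[t,t+h]} d(\gamma(\tau),H_h(\tau)) < \sqrt D\,\frac{\kappa}{4!}h^4$. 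Setting $\bar h := \min\{h_1,h_2,\rho_{\min}/(\text{const})\}$ makes everything valid.

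The main obstacle is the transfer between covariant and ordinary (chart) derivatives: \eqref{eq:boundednessOfKthDerivOfHh} bounds the covariant derivatives $\tfrac{\D^k}{\d\tau^k}H_h$ along the curve, but the Euclidean Hermite error argument needs bounds on the ordinary derivatives of $\tilde H = R_{\gamma(t)}^{-1}\circ H_h$ in a fixed chart. Converting one to the other requires expressing $\tilde H^{(k)}$ in terms of the covariant derivatives of $H_h$, the differential $\D R_{\gamma(t)}^{-1}$ and its higher derivatives, and the connection coefficients — all of which must be shown bounded \emph{uniformly in $t\in[0,1-h]$}. This uniformity is where the compactness of $\gamma([0,1])$ and the assumption $\rho_{\min}>0$ (so that the relevant quantities live on a fixed compact tube around $\gamma([0,1])$) do the work; I expect this bookkeeping, rather than any conceptual difficulty, to be the technical heart of the proof.
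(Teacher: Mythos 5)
Your proposal is correct and proves the claimed rate, but it takes a genuinely different route than the paper. The paper linearizes via \emph{normal coordinates}: it fixes a \emph{moving} base point $\gamma(s)$ with $s\in[t,t+h]$, sets $\hat\gamma_s=\Exp_{\gamma(s)}^{-1}\circ\gamma$ and $\hat H_s=\Exp_{\gamma(s)}^{-1}\circ H_h$ (in an orthonormal frame of $T_{\gamma(s)}\mcal$), applies the Rolle/Hermite bound componentwise for every $s$ to control $\hat E_s=\hat\gamma_s-\hat H_s$, and then evaluates at $s=\tau$. The payoff of the moving base is the exact identity $d(\gamma(\tau),H_h(\tau))=\|\hat E_\tau(\tau)\|_2$, which holds because in normal coordinates radial rays are length-minimizing geodesics; no Lipschitz constant is needed to transport the error back. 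The cost is that one must invoke the injectivity radius of $\Exp$ (Lemma~\ref{lem:representabilityInNormalCoordinates}) on top of the retraction-convexity already established in Proposition~\ref{prop:wellDefinitessOfRHOfCurve}, and must handle the full family $\{\hat E_s\}_s$. You instead linearize with the inverse \emph{retraction} at the fixed endpoint $\gamma(t)$, which stays within the retraction framework and sidesteps the exponential map entirely, but then you must pay a uniform bi-Lipschitz factor $L_R$ (Lemma~\ref{lem:lipschRetrAndInvRetr}~(i), made uniform over $\gamma([0,1])$ by Corollary~\ref{cor:lipschitzRetrOnCompact}) when converting $\|g(\tau)-\tilde H(\tau)\|_2$ to $d(\gamma(\tau),H_h(\tau))$. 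Both arguments then face the same technical core: transferring the covariant-derivative bounds of~\eqref{eq:boundednessOfKthDerivOfHh} into bounds on fourth-order \emph{chart} derivatives via a chain-rule / Faà di Bruno expansion, with uniformity in $t$ coming from compactness of $\gamma([0,1])$ and $\rho_{\min}>0$; the paper writes the expansion in terms of $\D^k\Exp_{\gamma(s)}^{-1}$ acting on covariant derivatives of $\gamma$ and $H_h$, while you phrase it via Christoffel symbols, which is an equivalent bookkeeping at the same level of detail. In short: the two proofs differ in the choice of linearizing diffeomorphism (moving $\Exp^{-1}$ vs.\ fixed $R^{-1}$) and in whether a transport constant is needed, but the Rolle argument, the $\sqrt D$ factor, and the compactness step are shared; your route is perhaps more in the spirit of a retraction-only paper, while the paper's normal-coordinate route buys a cleaner distance identity at the cost of a detour through the exponential map's injectivity radius.
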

	
	The above result is independent of $t\in\pac{0,1-h}$, hence we can bound the maximum interpolation error of the full piecewise RH interpolant of the curve $\gamma$ on the interval $\pac{0,1}$.
	\begin{corollary}
		Under the assumptions of Theorem~\ref{theo:convergenceOrder}, the piecewise RH interpolant $H$ of $\gamma$ defined by \eqref{eq:fullPiecewiseInterpolant} with data \eqref{eq:equispacedInterpProblem} verifies 
		\begin{equation*}
			\underset{\tau\in\pac{0,1}}{\max} d(\gamma(\tau), H(\tau)) < \revision{\kappa} h^4,
		\end{equation*}
		where $h=\max_{i = 0,\dots,N-1}  t_{i+1} - t_i$.
	\end{corollary}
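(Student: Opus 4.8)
The final statement is an immediate consequence of Theorem~\ref{theo:convergenceOrder}, so the plan is a straightforward reduction to a single subinterval followed by a maximum. First I would identify the restriction of $H$ to the $i$-th subinterval with one of the single-interval RH interpolants analysed in Theorem~\ref{theo:convergenceOrder}. Indeed, inserting the sampled data~\eqref{eq:equispacedInterpProblem} into the piecewise definition~\eqref{eq:fullPiecewiseInterpolant}, using $t_{i+1}=t_i+h_i$, and comparing with~\eqref{eq:hh}, one obtains for every $\tau\in\pac{t_i,t_{i+1}}$
\[
 H(\tau) = \alpha\pa{\tfrac{\tau-t_i}{h_i};\gamma(t_i),h_i\dot\gamma(t_i),\gamma(t_i+h_i),h_i\dot\gamma(t_i+h_i)} = H_{h_i}(\tau),
\]
that is, $H$ restricted to $\pac{t_i,t_{i+1}}$ is exactly the RH interpolant $H_h$ of~\eqref{eq:hh} with base point $t=t_i$ and step size $h=h_i$.

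Next I would apply Theorem~\ref{theo:convergenceOrder} on each subinterval separately. This is legitimate precisely because the hypotheses and the conclusion of that theorem are uniform in the base point: assumption~\eqref{eq:rhoMin} is a property of the whole curve $\gamma$; the derivative bounds~\eqref{eq:boundednessOfKthDerivOfHh} are imposed for every admissible base point; and the constants $\kappa$ and $\bar h$ it produces depend only on $\gamma$, the manifold and the retraction, not on the base point. Hence, as soon as $h=\max_{i=0,\dots,N-1}(t_{i+1}-t_i)<\bar h$, which forces $h_i<\bar h$ for each $i$, Theorem~\ref{theo:convergenceOrder} with base point $t_i$ and step $h_i$ gives
\[
 \max_{\tau\in\pac{t_i,t_{i+1}}} d(\gamma(\tau),H(\tau)) = \max_{\tau\in\pac{t_i,t_{i+1}}} d(\gamma(\tau),H_{h_i}(\tau)) < \sqrt{D}\,\frac{\kappa}{4!}\,h_i^4 \leq \sqrt{D}\,\frac{\kappa}{4!}\,h^4,
\]
where the final inequality uses $h_i\leq h$.

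To conclude, since $\pac{0,1}=\bigcup_{i=0}^{N-1}\pac{t_i,t_{i+1}}$, I would take the maximum of the last display over $i=0,\dots,N-1$, which yields the claimed bound; the overlaps at the nodes $t_i$ contribute nothing since $H$ interpolates $\gamma$ there exactly. I do not expect a genuine obstacle here: all the work sits in Theorem~\ref{theo:convergenceOrder}, and the corollary is a reduction plus a finite maximum. The only subtlety worth making explicit is that the constants $\kappa,\bar h$ must stay valid as the partition is refined ($N\to\infty$), which is exactly what the uniformity over base points $t\in\pac{0,1-h}$ in Theorem~\ref{theo:convergenceOrder} provides: one and the same pair $(\kappa,\bar h)$ works simultaneously on every subinterval.
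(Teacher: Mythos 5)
Your proposal is correct and follows exactly the reasoning the paper intends: the paper gives no explicit proof of this corollary, only the one-sentence remark preceding it that ``The above result is independent of $t\in\pac{0,1-h}$,'' which is precisely the uniformity-in-base-point observation you spell out. Your identification of $H\restr{\pac{t_i,t_{i+1}}}$ with $H_{h_i}$ from \eqref{eq:hh}, the application of Theorem~\ref{theo:convergenceOrder} on each subinterval with the same pair $(\kappa,\bar h)$, and the finite maximum over $i$ (using $h_i\leq h$) is the intended, complete argument.
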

	
	The proof of Theorem~\ref{theo:convergenceOrder} relies on a local linearization of the manifold: the interpolation curve $H_h$ and the curve $\gamma$ are represented in normal coordinates \cite[p. 132]{leeRiemManifs2} around a point of the curve $\gamma$ by applying the local inverse of the Riemannian exponential map. This is possible provided that the image of $\gamma$, the control points, and intermediate quantities involved in the procedure remain confined to a domain of invertibility of the  exponential map as $h\rightarrow 0$. The following Lemmas~\ref{lem:lipschitzContinuityIndepOfH} and~\ref{lem:representabilityInNormalCoordinates} show that these requirements are met when considering sufficiently small $h$, reflecting the asymptotic nature of Theorem~\ref{theo:convergenceOrder}.  In practice, our experiments suggest that the convergence order $4$ established by the theorem can be observed as soon as the interpolation curve is well-defined; see the numerical experiments Section~\ref{s:numericalExperiments}. This assumes that 
	the scheme verifies assumption~\eqref{eq:boundednessOfKthDerivOfHh}; see Section~\ref{ss:needForBoundedDerivatives} for the importance of this assumption.
	
	\begin{lemma}\label{lem:lipschitzContinuityIndepOfH}
		There exist a constants $L_{RH}>0$ and $h_3 > 0$ depending on $\gamma$ and on the retraction such that for every $0<h< h_3$ and any $t\in\pac{0,1-h}$
		\begin{equation}
			d(H_h(\tau_1),H_h(\tau_2)) \leq L_{RH} |\tau_1 - \tau_2|, \quad \forall \tau_1, \tau_2\in \pac{t,t+h}.
		\end{equation} 
	\end{lemma}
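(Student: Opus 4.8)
The plan is to bound the speed of $H_h$ uniformly: I will produce $h_3>0$ and $L_{RH}>0$, depending only on $\gamma$ and the retraction, with $\norm{\dot H_h(\tau)}_{H_h(\tau)}\le L_{RH}$ for every $0<h<h_3$, $t\in\pac{0,1-h}$ and $\tau\in\pac{t,t+h}$. The lemma then follows immediately, since $H_h$ is itself an admissible curve in \eqref{eq:distanceFunction}, giving $d(H_h(\tau_1),H_h(\tau_2))\le\bigl|\int_{\tau_1}^{\tau_2}\norm{\dot H_h(\tau)}_{H_h(\tau)}\dd\tau\bigr|\le L_{RH}|\tau_1-\tau_2|$. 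Writing $H_h(\tau)=\beta(s;p_0,p_0^+,p_1^-,p_1)$ with $s=(\tau-t)/h$ and control points $p_0,p_0^+,p_1^-,p_1$ as in the proof of Proposition~\ref{prop:wellDefinitessOfRHOfCurve}, the chain rule gives $\dot H_h(\tau)=\tfrac1h\,\partial_s\beta(s;p_0,p_0^+,p_1^-,p_1)$; hence it is enough to show $\norm{\partial_s\beta}=O(h)$ uniformly, the $1/h$ then cancelling the $h$.

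The key observation is that if the four control points coincide, $b_0=b_1=b_2=b_3=b$, then every $r$-endpoint retraction curve entering the construction of Definition~\ref{def:genCasteljau} degenerates to the constant curve equal to $b$ (because $R_b^{-1}(b)=0$ and $R_b(0)=b$), so $\beta(\cdot;b,b,b,b)\equiv b$ and therefore $\partial_s\beta(s;b,b,b,b)=0$ for all $s$. Since $\beta$ is a composition of the retraction, the inverse retraction (both smooth, the latter by Proposition~\ref{prop:retractionDiffeom}) and affine maps, the map $(s,b_0,b_1,b_2,b_3)\mapsto\partial_s\beta(s;b_0,b_1,b_2,b_3)$ is $C^\infty$ wherever the generalized de Casteljau construction is well-posed, and it vanishes on the diagonal $\{b_0=b_1=b_2=b_3\}$. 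A standard mean-value estimate — in local coordinates, a $C^1$ function vanishing on an affine subspace is bounded by a constant times the distance to that subspace — then yields, on any compact set on which $\partial_s\beta$ is defined and $C^1$,
\[
 \norm{\partial_s\beta(s;b_0,b_1,b_2,b_3)}\le C\bigl(d(b_0,b_1)+d(b_1,b_2)+d(b_2,b_3)\bigr).
\]
The proof of Proposition~\ref{prop:wellDefinitessOfRHOfCurve} shows that for $h<h_1$ all control points of $H_h$ lie in $B(\gamma(t),Qh)$, so each distance above is $\le 2Qh$; combining the two bounds gives $\norm{\partial_s\beta}\le 6CQ\,h$, i.e. the desired speed bound with $L_{RH}=6CQ$.

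It remains to make $C$ independent of $t\in\pac{0,1-h}$. Cover the compact set $\gamma(\pac{0,1})$ by finitely many balls $B(\gamma(\tau_j),r_j)$ with $r_j$ small enough that each $\overline{B(\gamma(\tau_j),2r_j)}$ is compact and contained in a retraction-convex ball centered at $\gamma(\tau_j)$ — possible because small closed balls are compact and $B(\gamma(\tau_j),\rho)$ is retraction-convex for $\rho<\bar\rho(\gamma(\tau_j))$, with $\bar\rho(\gamma(\tau_j))\ge\rho_{\min}>0$. With $r:=\min_j r_j>0$ and $K:=\bigcup_j\overline{B(\gamma(\tau_j),2r_j)}$, the set $K$ is compact and $\overline{B(\gamma(\tau),r)}$ lies, for each $\tau\in\pac{0,1}$, in one of these retraction-convex balls and hence in $K$. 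Consequently, on the compact set $\mathcal{K}:=\{(s,b_0,b_1,b_2,b_3):s\in\pac{0,1},\ b_0,b_1,b_2,b_3\in\overline{B(\gamma(\tau),r)}\text{ for some }\tau\in\pac{0,1}\}$ the four control points always lie in a common retraction-convex set, so $\beta$ — and thus $\partial_s\beta$ — is defined and $C^1$ there by Proposition~\ref{prop:generalizedDeCastRetrCurveWellPosedness}; set $C:=\sup_{\mathcal{K}}\norm{\D(\partial_s\beta)}<\infty$. Finally choose $h_3\le h_1$ with $Qh_3<r$: then for $0<h<h_3$ and $t\in\pac{0,1-h}$ the control points of $H_h$ satisfy $B(\gamma(t),Qh)\subset\overline{B(\gamma(t),r)}$, so $(s,p_0,p_0^+,p_1^-,p_1)\in\mathcal{K}$ and the estimate of the previous paragraph applies.

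I expect the main obstacle to be precisely this uniformization: checking that a single compact set and a single constant control $\partial_s\beta$ simultaneously for all $t$ and all sufficiently small $h$, and that $\partial_s\beta$ is genuinely $C^1$ there — which rests on the well-posedness of the generalized de Casteljau curve on retraction-convex sets (Proposition~\ref{prop:generalizedDeCastRetrCurveWellPosedness}) and on the smoothness of $R^{-1}$ (Proposition~\ref{prop:retractionDiffeom}). An alternative, more computational route avoids the ``vanishing on the diagonal'' argument and instead differentiates $\beta$ stage by stage through the three recursion levels of Algorithm~\ref{alg:onlinePhase}: at each level the tangent vectors fed into the retractions have norm $O(h)$ — by the inverse-retraction Lipschitz bound of Lemma~\ref{lem:lipschRetrAndInvRetr}, since all points involved are $O(h)$-close (cf. the proof of Proposition~\ref{prop:wellDefinitessOfRHOfCurve}) — while $\D R$ and $\D R^{-1}$ are bounded on $K$, so that $\norm{\partial_s\beta}=O(h)$ follows by induction on the recursion depth, with the same conclusion.
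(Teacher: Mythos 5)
Your argument is correct in outline but takes a genuinely different route from the paper's. The paper's proof never passes through the speed $\norm{\dot H_h}$; instead it works directly with distances, unfolding the three levels of the de Casteljau recursion and repeatedly applying the per-argument Lipschitz bounds for $c_r$ established in Lemma~\ref{lem:rEndPointLipschitz} and Corollary~\ref{cor:rEndPointLipschitz}. The crucial scaling is the same in both proofs (control points are $O(h)$ apart, so each application of $c_r$'s Lipschitz-in-$t$ estimate contributes a factor $O(h)\cdot|z_1-z_2|$, which cancels the $1/h$ reparametrization), but the paper extracts the $O(h)$ factor entirely through the explicit constants $L_t, L_r, L_{xy}$, whereas you extract it via the conceptual observation that $\partial_s\beta$ vanishes identically on the diagonal $\{b_0=b_1=b_2=b_3\}$ and a mean-value/$C^1$ estimate. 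Your observation is elegant and gives a shorter argument once the machinery is in place; the paper's approach avoids any appeal to a manifold-valued mean-value estimate and instead relies only on the Lipschitz constants of $c_r$, which it has already built up in the appendix and needs for other results anyway.

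A few technical points you would need to tighten before the mean-value route is fully rigorous. First, $\partial_s\beta$ is tangent-vector valued (it lives in $T_{\beta(s;\cdot)}\mcal$, a fiber that moves with the argument), so the estimate ``$C^1$ vanishing on an affine subspace is bounded by the distance to it'' has to be applied in local coordinates where the tangent bundle is trivialized, with the chart distortion controlled uniformly on a compact set; you implicitly assume this but it is exactly the kind of uniformity the paper avoids by working intrinsically with $d(\cdot,\cdot)$. Second, the standard mean-value estimate needs the segment in coordinates from $(b_0,b_1,b_2,b_3)$ to its projection onto the diagonal to stay in the domain where $\partial_s\beta$ is defined and $C^1$; your set $\mathcal{K}$ is not obviously convex in any chart, so you would need to shrink to coordinate balls where convexity is available. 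Third, as stated $\mathcal{K}$ is a union over $\tau\in[0,1]$ and is not evidently closed; to take a finite $\sup\norm{\D(\partial_s\beta)}$ you should replace $\mathcal{K}$ by its closure and verify the closure remains inside the domain of well-posedness, or simply take the sup over a slightly larger compact set that you check is still covered by retraction-convex balls. None of these is a serious obstruction, but they are real work that the paper's distance-based recursion sidesteps. Your alternative route in the final paragraph (differentiating the recursion stage by stage with $\D R$, $\D R^{-1}$ bounded on a compact) is in fact much closer to the paper's actual proof, modulo the replacement of derivative bounds by distance bounds.
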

	
	A proof of Lemma~\ref{lem:lipschitzContinuityIndepOfH} can be found in Appendix~\ref{app:lipschitzContinuityIndepOfHProof}. Lipschitz continuity of $H_h$ and of $\gamma$ provides sufficient conditions to be able to represent the images of $H_h$ and $\gamma$ on the interval $\pac{t,t+h}$ in normal coordinates around any point $\gamma(s)$, $s\in \pac{t,t+h}$.
	\begin{lemma}\label{lem:representabilityInNormalCoordinates}
		Denote $r_{\min} := \min_{\tau\in\pac{0,1}}\operatorname{inj}(\gamma(\tau))$, the minimum of the injectivity radius of the Riemannian exponential map along the curve. There exists $ h_4 > 0$ such that for every $0<h< h_4$ and any $t\in\pac{0,1-h}$ we have 
		\begin{equation*}
			\begin{aligned}
				d(\gamma(s), \gamma(\tau)) &< r_{\min},\\
				d(\gamma(s), H_h(\tau)) &< r_{\min},
			\end{aligned} \quad \forall\,s,\tau \in \pac{t,t+h}.
		\end{equation*}
	\end{lemma}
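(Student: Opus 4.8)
The plan is to reduce both displayed inequalities to $O(h)$ bounds that are uniform in the step size $h$ and in the base point $t$, and then to pick $h_4$ small enough that these bounds stay strictly below $r_{\min}$.

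First I would record that $r_{\min}>0$. The injectivity radius of the Riemannian exponential map is a positive, continuous function on $\mcal$, so $\tau\mapsto\operatorname{inj}(\gamma(\tau))$ is positive and continuous on the compact interval $\pac{0,1}$ and therefore attains a strictly positive minimum, namely $r_{\min}$. Next, since $\gamma\in C^4(\pac{0,1})$ it is Lipschitz on $\pac{0,1}$; writing $L_\gamma$ for a Lipschitz constant, for any $t\in\pac{0,1-h}$ and $s,\tau\in\pac{t,t+h}$ one has $d(\gamma(s),\gamma(\tau))\leq L_\gamma|s-\tau|\leq L_\gamma h$, which is strictly less than $r_{\min}$ as soon as $h<r_{\min}/L_\gamma$. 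This disposes of the first inequality.

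For the second inequality I would combine the triangle inequality, the interpolation property $H_h(t)=\gamma(t)$ recorded just after~\eqref{eq:hh}, and the $h$-independent Lipschitz bound of Lemma~\ref{lem:lipschitzContinuityIndepOfH}. Indeed, for $0<h<h_3$ and $s,\tau\in\pac{t,t+h}$,
\[
d(\gamma(s),H_h(\tau))\leq d(\gamma(s),\gamma(t))+d(H_h(t),H_h(\tau))\leq L_\gamma h+L_{RH}|t-\tau|\leq (L_\gamma+L_{RH})h,
\]
which is again strictly below $r_{\min}$ once $h<r_{\min}/(L_\gamma+L_{RH})$. It then suffices to take $h_4:=\min\paa{h_1,\,h_3,\,r_{\min}/(L_\gamma+L_{RH})}$, where $h_1$ is the constant from Proposition~\ref{prop:wellDefinitessOfRHOfCurve} (ensuring $H_h$, hence $d(\gamma(s),H_h(\tau))$, is defined at all), $h_3$ comes from Lemma~\ref{lem:lipschitzContinuityIndepOfH}, and $r_{\min}/(L_\gamma+L_{RH})\leq r_{\min}/L_\gamma$ handles the first estimate simultaneously.

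I do not anticipate a genuine obstacle: the argument is essentially the triangle inequality together with compactness and the uniform Lipschitz estimates already at hand. The only step requiring a little care---and the only ingredient that is not completely elementary---is the strict positivity of $r_{\min}$, which I would justify through the continuity of the injectivity radius along $\gamma$; the constants $L_\gamma$ and $L_{RH}$ come for free from the $C^4$ regularity of $\gamma$ and from Lemma~\ref{lem:lipschitzContinuityIndepOfH}, respectively.
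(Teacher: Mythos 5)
Your proof is correct and follows essentially the same route as the paper's: triangle inequality together with the Lipschitz bound $L_\gamma$ for $\gamma$ and the $h$-independent Lipschitz bound $L_{RH}$ for $H_h$ from Lemma~\ref{lem:lipschitzContinuityIndepOfH}, using $H_h(t)=\gamma(t)$ to switch from $\gamma$ to $H_h$. The only cosmetic difference is bookkeeping of the threshold: the paper takes $h_4=\min\{r_{\min}/(2L_\gamma),\,h_3,\,r_{\min}/(2L_{RH})\}$ and splits $r_{\min}$ into two halves, while you use the combined constant $r_{\min}/(L_\gamma+L_{RH})$; also note that including $h_1$ separately is redundant, since $h_3\leq h_1$ by its definition in the proof of Lemma~\ref{lem:lipschitzContinuityIndepOfH}.
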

	\begin{proof}
		The constant $r_{\min}$ is strictly positive by the continuity of the injectivity radius function~\cite[Proposition 10.24]{boumalBook} and compactness of the image of the curve. We take any $0 < h < h_4 := \min\paa{\frac{r_{\min}}{2L_\gamma}, h_3, \frac{r_{\min}}{2L_{RH}}}$ and consider an arbitrary $t\in\pac{0,1-h}$. First of all, for any $s,\tau\in\pac{t,t+h}$ we have 
		\begin{equation}
			d(\gamma(s),\gamma(\tau)) \leq L_\gamma h < \frac{r_{\min}}{2} < r_{\min},
		\end{equation}
		Furthermore, the requirement $h< h_3$ guarantees that $H_h$ is well-defined and, by Lemma~\ref{lem:lipschitzContinuityIndepOfH}, that $H_h$ is $L_{RH}$-Lipschitz continuous. Thus
		\begin{align*}
			d(\gamma(s),H_h(\tau)) \leq d(\gamma(s),\gamma(t)) + d(\gamma(t),H_h(\tau))&< \frac{r_{\min}}{2} + d(H_h(t),H_h(\tau))\\ &< \frac{r_{\min}}{2} + L_{RH} h\\ &< \frac{r_{\min}}{2} + \frac{r_{\min}}{2} = r_{\min}.
		\end{align*} \qed
	\end{proof}
	
	\begin{proof}(of Theorem~\ref{theo:convergenceOrder})
		Take any $0<h<h_4$, $t\in\pac{0,1-h}$ and $s\in\pac{t,t+h}$. By Lemma~\ref{lem:representabilityInNormalCoordinates}, we can express $\gamma$ and $H_h$ in normal coordinates. Define 
		\begin{equation}\label{eq:normalCoordinateCurves}
			\begin{aligned}
				\hat\gamma_s &= \varphi_s(\gamma),\\
				\hat H_s &= \varphi_s(H_h),
			\end{aligned}
		\end{equation}
		where, for any $p\in B(\gamma(s),\operatorname{inj}(\gamma))$, $\varphi_s(p) = V_{\gamma(s)}^{-1}\circ \Exp_{\gamma(s)}^{-1}(p)$ and $V_x:\Rbb^D\to T_x\mcal$ is the basis isomorphism associated to any orthonormal basis $\paa{b_i}_{i=1}^D$ of $T_x\mcal$, for any $x\in\mcal$ .
		
		The interpolation error in normal coordinates at $\gamma(s)$ is defined as $\hat E_s(\tau) = \hat\gamma_s(\tau) - \hat H_s(\tau)$ for every $\tau \in \pac{t,t+h}$ and by construction satisfies 
		%\begin{equation}\label{eq:interpCondInNormalCoords}
		\[
			\begin{aligned}
				\hat E_s(t) = 0,&& \quad && \hat E_s(t+h) = 0,\\
				\ddt{}{\tau}\hat  E_s(\tau)\restr{\tau = t} = 0,&& \quad && \ddt{}{\tau}\hat  E_s(\tau)\restr{\tau = t + h} = 0.
			\end{aligned}
		\]
		
		By applying iteratively Rolle's theorem on the components $\hat E_{s,i}$ for each $i = 1,\dots,D$, we can show there exists $\xi_i\in\pac{t,t+h}$ such that
		\[
			|\hat E_{s,i}(\tau)| \leq \frac{|\hat E_{s,i}^{(4)}(\xi_i)|}{4!} h^4,
		\]
		where the exponent in parentheses indicates the order of differentiation.
		Therefore, for any $s\in\pac{t,t+h}$ we can bound the norm of \revision{the} interpolation error as follows:
		\begin{equation*}
			\begin{aligned}
				\norm{\hat E_s(\tau)}_{2} &\leq \frac{h^4}{4!} \pa{\sum_{i=1}^D |\hat E_{s,i}^{(4)}(\xi_i)|^2}^{1/2}\\
				&\leq \frac{h^4}{4!} \sqrt{D} \max_{i = 1,\dots,D} |\hat E_{s,i}^{(4)}(\xi_i)|\\
				&\leq \frac{h^4}{4!} \sqrt{D} \max_{i = 1,\dots,D}\norm{\hat E_{s}^{(4)}(\xi_i)}_2\\
				&\leq   \frac{h^4}{4!} \sqrt{D} \max_{\xi\in \pac{t,t+h}} \norm{\hat E^{(4)}_s(\xi)}_2.
			\end{aligned}
		\end{equation*}
		An important property of normal coordinates is that radial directions map to length-minimizing geodesics, thus $d(\gamma(\tau), H(\tau)) = \norm{V_{\gamma(\tau)} \hat E_\tau(\tau)}_{\gamma(\tau)} = \norm{\hat E_\tau(\tau)}_2$. So we can say
		\begin{align*}
			d(\gamma(\tau), H(\tau)) &\leq  \frac{h^4}{4!} \sqrt{D} \max_{s,\xi\in \pac{t,t+h}} \norm{\hat E^{\revision(4)}_s(\xi)}_2\\
			&\leq \frac{h^4}{4!} \sqrt{D} \max_{s,\xi\in \pac{t,t+h}} \paa{\norm{\hat \gamma_s^{(4)}(\xi)}_2 + \norm{\hat H_s^{(4)}(\xi)}_2}
		\end{align*}
		
		By the smoothness of the exponential map and its inverse, applying the chain rule to~\eqref{eq:normalCoordinateCurves} gives an expression for $\hat\gamma_s^{(4)}$ and $\hat H_s^{(4)}$ respectively as the sum of contributions of the form 
		\begin{equation*}
			\D^{k}\Exp_{\gamma(s)}^{-1}(\gamma(\xi))\pac{\Ddt{^{i_1}\gamma(\xi)}{\xi^{\revision{i_1}}},\dots,\Ddt{^{\revision{i_k}}\gamma(\xi)}{\xi^{\revision{i_k}}}}
		\end{equation*}
		and 
		\begin{equation*}		\D^{k}\Exp_{\gamma(s)}^{-1}(H_h(\xi))\pac{\Ddt{^{i_1}H_h(\xi)}{\xi^{\revision{i_1}}},\dots,\Ddt{^{i_k}H_h(\xi)}{\xi^{i_k}}}
		\end{equation*}
		with $k\in\paa{1,2,3,4}$ and $i_j \geq 0$ such that $i_1+\dots + i_k = 4$ and where $\D^k\Exp^{-1}_{\gamma(s)}(\cdot)$ is the multilinear operator associated to the differential of order $k$. Then, the norm of each term is bounded by a constant independent of $h$ (the maximum of the multilinear operator norms on the curve image) multiplied by the Lipschitz constant of the curve $\gamma$, the suprema of the norms of its higher order covariant derivatives and, provided $h<h_2$, the constants given by assumption~\eqref{eq:boundednessOfKthDerivOfHh}. \revision{In summary, this} produces a constant $\kappa>0$ such that for any $h < \bar h :=\min\paa{h_2,h_4}$ and any $t\in\pac{0,1-h}$ we have 
		\begin{equation*}
			\underset{\tau\in\pac{t,t+h}}{\max} d(\gamma(\tau), H_h(\tau)) < \revision{\kappa} h^4.
		\end{equation*} \qed
	\end{proof} 
	
	\section{Numerical experiments} \label{s:numericalExperiments}
	The following section is dedicated to numerical experiments illustrating the RH interpolation method, summarized in Algorithms~\ref{alg:offlinePhase} and~\ref{alg:onlinePhase}. All experiments have been carried out in Matlab 2019b leveraging the differential geometry tools of the Manopt library \cite{manoptPaper} on a laptop computer with Intel i7 CPU (1.8GHz with single-thread mode) with 8GB of RAM, 1MB of L2 cache and 8MB of L3 cache.
	
	\subsection{Manifolds and retractions of interest}
	
	Let us briefly introduce the manifolds and retractions used in our experiments.
	
	\subsubsection{Stiefel manifold}
	The set of column orthogonal matrices of size $ n\times k$ is a manifold of dimension $nk-\frac{k(k+1)}{2}$ that is commonly known as the (compact) Stiefel manifold and denoted by
	\begin{equation}
		\stiefel{n}{k} := \paa{X\in\matr{n}{k}: X^\top X = I_{k\times k}}.
	\end{equation}
	We endow the manifold with the Riemannian submanifold structure, i.e. by inducing on each tangent space $T_X \stiefel{n}{k}$ the standard inner product on $\matr{n}{k}$. An exhaustive description of this structure can be found in \cite[\S 7.3]{boumalBook} or \cite[\S 2.2]{edelmanExpForStiefel}. 
	On the Stiefel manifold we are aware of two retractions for which the inverse retraction can be conveniently computed. 
	Given $X\in\stiefel{n}{k}$ and $V\in T_X\stiefel{n}{k}$ we define:
	\begin{itemize}
		\item the Q-factor retraction \cite[\S 7.3]{boumalBook}: \begin{equation*}
			R_X^{\mathrm{Q}}\revision{(V)} = \mathrm{qf}(X + V)
		\end{equation*}
		where $\mathrm{qf}(A) = Q \in \mathbb R^{n\times k}$ such that $A = QR$ is the unique (thin) QR decomposition of $A$. The uniqueness is guaranteed if $A$ has full rank (which is the case for $X + V$) and if we enforce the diagonal entries of $R$ to be positive. A procedure to compute the inverse Q-factor retraction is proposed in \cite[Algorithm 1]{stiefelInvRetr} and consists of solving a linear matrix equation for an upper triangular matrix. 
		\item the P-factor retraction \cite[\S 3.3]{projLikeRetractions}: \begin{equation*}
			R_X^{\mathrm{P}}\revision{(V)} = \mathrm{pf}(X + V)
		\end{equation*}
		where $\mathrm{pf}(A) = P$ such that $A = PS$ is the unique polar decomposition of the matrix A. The polar decomposition can be obtained from the SVD of the matrix. An algorithm to compute the inverse P-factor retraction is proposed in \cite[Algorithm 2]{stiefelInvRetr} and requires solving a Lyapunov matrix equation.
	\end{itemize} 
	
	\subsubsection{Fixed-rank manifold}
	The set of $m\times n$ matrices of rank $k\leq \min\paa{m,n}$ denoted by
	\begin{equation}
		\mcal_k := \paa{X \in \Rbb^{m\times n}: \rank{X} = k}
	\end{equation}
	is a manifold of dimension $(m+n-k)k$ embedded in $\Rbb^{m\times n}$ \cite[Example 8.14]{leeSmoothManifs1}.
	
	Among many equivalent parametrizations of this manifold, we choose to represent points of $\mcal_k$ with their economy-sized SVD. The manifold can be made an embedded Riemannian submanifold of $\matr{m}{n}$ by inducing on each tangent space the inner product of $\matr{m}{n}$. We refer the reader to \cite{bartOnMk}~\cite{bartUschmajewOnMk} for details on the representation and geometry of the manifold.
	
	Possibly the most frequently used retraction on the rank-$k$ matrix manifold is the $k$-truncated SVD retraction. However, we are not aware of any procedure allowing to compute efficiently the corresponding inverse retraction. As highlighted by Absil and Malick \cite{projLikeRetractions}, an alternative retraction for the fixed-rank matrix manifold is the so-called orthographic retraction. It consists of perturbing the point $X$ in the ambient space as $X+V\in\matr{m}{n}$ and projecting back onto the manifold along vectors from the normal space of the starting point, see Figure~\ref{fig:orthographicRetr}. Formally this reads:
	\begin{equation}
		R_X: V\in T_X\mcal_k\mapsto \underset{P\in(X+V+N_X\mcal_k)\cap\mcal_k}{\arg\min}\norm{X + V - P}_F^2.
	\end{equation}
	A closed-form expression for the solution of this optimization problem can be found in \cite[\S 3.2]{absilOseledets}. The major advantage of the orthographic retraction is that the inverse retraction is trivial. As illustrated by Figure~\ref{fig:orthographicRetr}, it is sufficient to project the ambient space difference onto the tangent space, i.e. ,
	\begin{equation}
		R_X^{-1}(Y) = \Pi_X(Y-X).
	\end{equation}
	
	\begin{figure}
		\centering
		\includegraphics[width=0.65\linewidth]{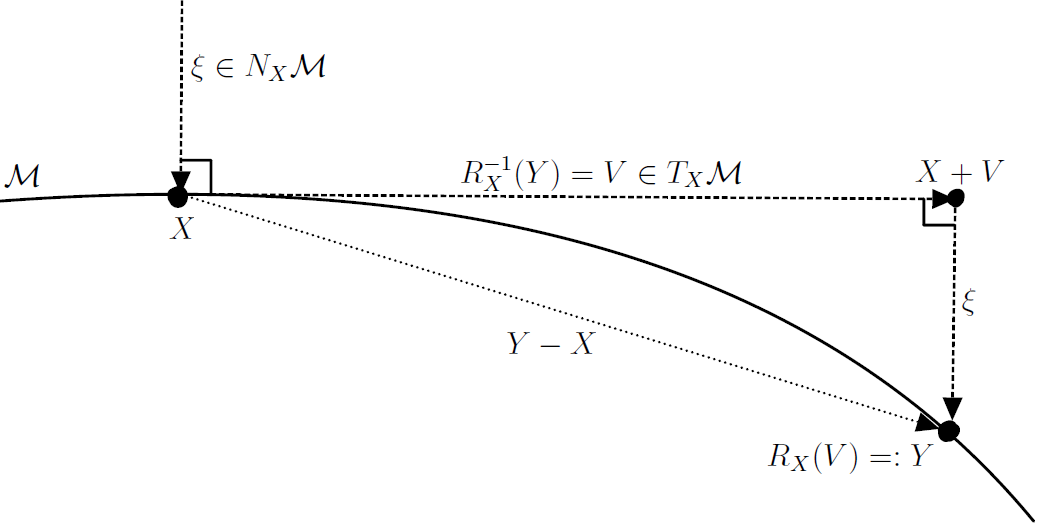}
		\caption{Orthographic retraction and its inverse.}
		\label{fig:orthographicRetr}
	\end{figure}
	
	\subsection{Academic examples}\label{ss:academicExamples}
	In the following sections, we illustrate the RH interpolation scheme on two problems arising from numerical linear algebra: the computation of a smooth QR decomposition and a smooth singular value decomposition for a given smooth matrix curve $t\in\pac{a,b}\to A(t) \in \matr{m}{n}$. Assuming for the moment that these smooth decomposition exist and can be computed, the experimental setup is the following. We sample the decomposition and its first order derivative at uniformly spaced location\revision{s} and interpolate this data with different manifold interpolation schemes. We then vary the sampling step size $h$ and measure the maximum interpolation error with respect to the original smooth decomposition. 
	
	\subsubsection{Comparing with other retraction-based schemes}\label{sss:otherSchemes}
	The RH method is compared with two \revision{other} interpolation schemes that only use retractions. First, the analogous of the piecewise linear interpolant is defined as $L(t)\restr{[t_i,t_{i+1}]} = c_0\pa{\frac{t-t_i}{h}, p_i, p_{i+1}}$, where $c_0$ is the endpoint retraction curve defined in Definition~\ref{def:endpointRetr}. We then consider a naive piecewise Hermite interpolant \revision{that} uses the same control points as the RH scheme but where only the endpoint curve $c_0$ is used as the building block for the generalized de Casteljau procedure. The resulting curve is then not expected to be continuously differentiable at the junctions.
	
	\subsubsection{Implementation details}
	
	%In the following experiments, the computation of manifold curve derivatives is achieved with a
	
	To measure the error of an approximation $t\to\tilde A(t)$ to a matrix manifold curve $A(t)$, we consider the pointwise errors 
	\begin{equation*}
		\varepsilon_{P}(t) = \norm{A(t) - \tilde A(t)}_F
		\quad \text{and} \quad
		\varepsilon_D(t) = \norm{\dot A(t) - \dot{\tilde {A}}(t)}_F.
	\end{equation*}
	For convenience and the purpose of these experiments, we compute all required derivatives via centered finite differences: 
	\begin{equation}\label{eq:finiteDifferenceOnManif}
		\dot A(t) \simeq \Pi_{A(t)}\pa{\frac{A(t + \Delta t) - A(t - \Delta t)}{2\Delta t}}, \quad \Delta t = 10^{-5}.
	\end{equation}
	Note that Theorem~\ref{theo:convergenceOrder} uses the Riemannian distance to measure the interpolation error while we measure the error with the ambient space distance. It can be shown that if an embedded manifold is endowed with the induced metric, the Euclidean distance is locally equivalent to the Riemannian distance, see e.g. \cite[Appendix A]{attali}.
	
	\subsubsection{Q-factor interpolation}
	For convenience, the example matrix curve we consider is the same as in \cite[\S 5.2]{zimmermann}. It consists of the matrix polynomial 
	\begin{equation}\label{eq:qfactorInterpolationInstance}
		Y(t) = Y_0 + t Y_1 + t^2 Y_2 + t^3 Y_3,\quad Y_i \in \matr{n}{k}, n = 500, k = 10, t\in\pac{-1.1,1.1},
	\end{equation} 
	where the entries of the matrices $Y_i$ are pseudo-randomly generated from uniform distributions on $\pac{0,1}$, $\pac{0,0.5}$, $\pac{0,0.5}$, $\pac{0,0.2}$ respectively. 
	The matrix $Y(t)$ is generically full-rank for every $t\in\pac{-1.1,1.1}$ and is smooth, thus owing to \cite[Proposition 2.3]{lucaDieci} there exist unique smooth curves $t\to Q(t)\in\stiefel{n}{k}$ and $t\to R(t) \in \matr{k}{k}$ with positive diagonal entries, such that $Y(t) = Q(t)R(t)$. The positivity of the diagonal entries is explicitly enforced in the experiment. We focus on interpolating the curve $Q(t)$ on $\stiefel{n}{k}$. At each sample location $t_i$ we store $p_i = Q(t_i)$ and $v_i = \dot Q(t_i)$ obtained with~\eqref{eq:finiteDifferenceOnManif}.
	
	In Figure~\ref{fig:stiefelFullError}, we plot the pointwise and derivative error as a function of the curve parameter $t$ when interpolating~\eqref{eq:qfactorInterpolationInstance} with different schemes. While all schemes interpolate correctly the data points (left panel), only the RH scheme manages to match the derivative at sample points (right panel). Figure~\ref{fig:stiefelErrorConvergence} illustrates the result of Theorem~\ref{theo:convergenceOrder}. Plotting the maximum pointwise interpolation error against the sampling step sizes $h$ reveals the expected $O(h^4)$ trend for the RH scheme. Interestingly, as in the Euclidean case, the derivative error converges one order slower than the pointwise error. For these experiments we used the P-factor retraction, but analogous result are found with the Q-factor retraction. The difference between the two retractions is also negligible in terms of evaluation time as it can be seen from Table~\ref{tab:CPUTimes}. These results also show that with the offline/online procedure proposed in Algorithms~\ref{alg:offlinePhase} and~\ref{alg:onlinePhase}, the evaluation cost of the RH scheme is comparable with the one of other schemes. Note that, for a fair comparison, the other schemes have also been implemented in an offline/online fashion to minimize evaluation cost.  
	\begin{figure}
		\centering
		\begin{subfigure}{0.495\linewidth}
			\includegraphics[width=\linewidth, trim=1.55cm 0 0 0, clip]{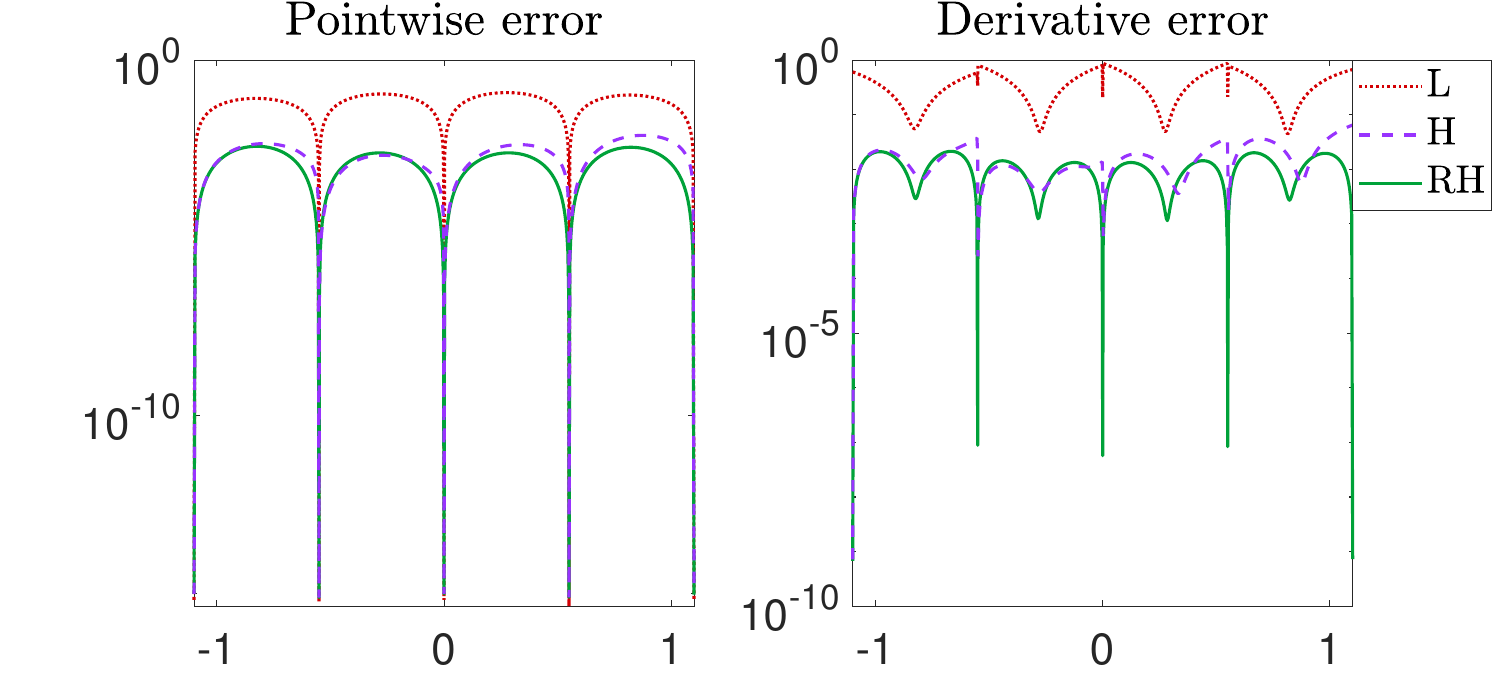}
			\subcaption{Q-factor interpolation.}
			\label{fig:stiefelFullError}
		\end{subfigure}
		\begin{subfigure}{0.495\linewidth}
			\includegraphics[width=\linewidth, trim=1.55cm 0 0 0, clip]{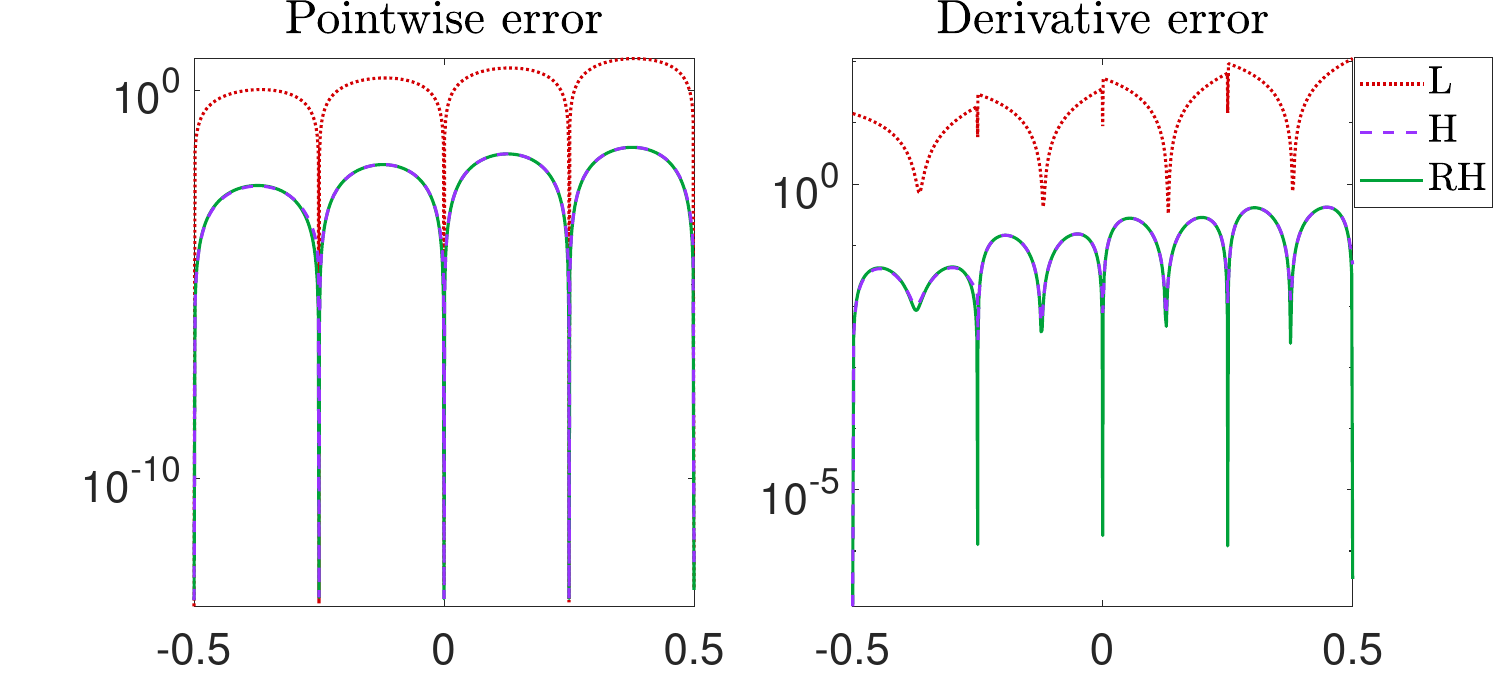}
			\subcaption{SVD interpolation.}
			\label{fig:lowRankFullError}
		\end{subfigure}
		\caption{Interpolation error vs $t$ for different retraction-based schemes: linear (L) and Hermite (H), as defined in Section~\ref{sss:otherSchemes}, and the RH scheme. }
	\end{figure}
	\begin{figure}
		\centering
		\begin{subfigure}{0.495\linewidth}
			\includegraphics[height=5.98cm,trim=0.425cm 0.39cm 5.63cm 0.5cm, clip]{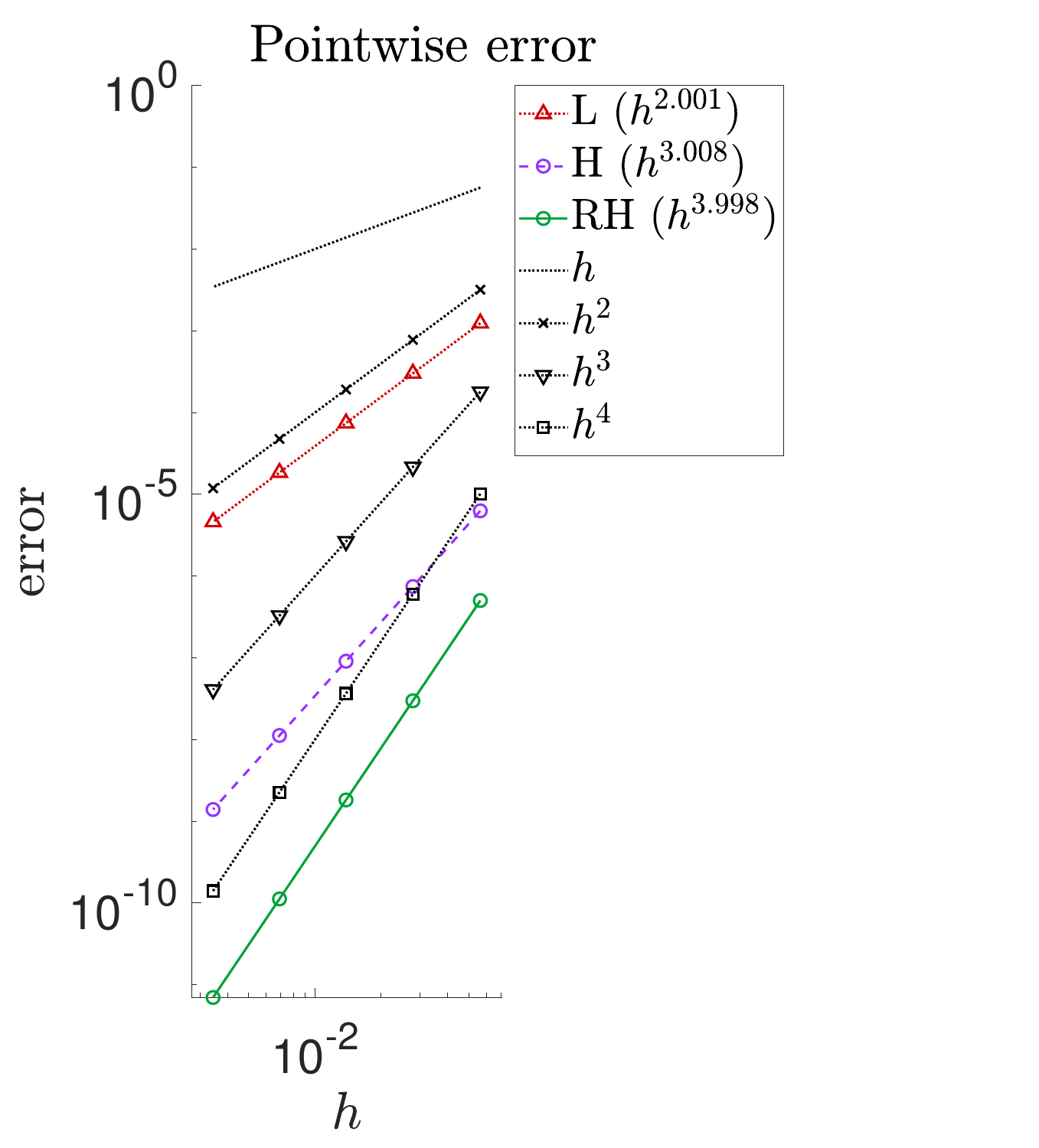}
			\includegraphics[height=5.98cm,trim=1.425cm 0.39cm 5.63cm 0.5cm, clip]{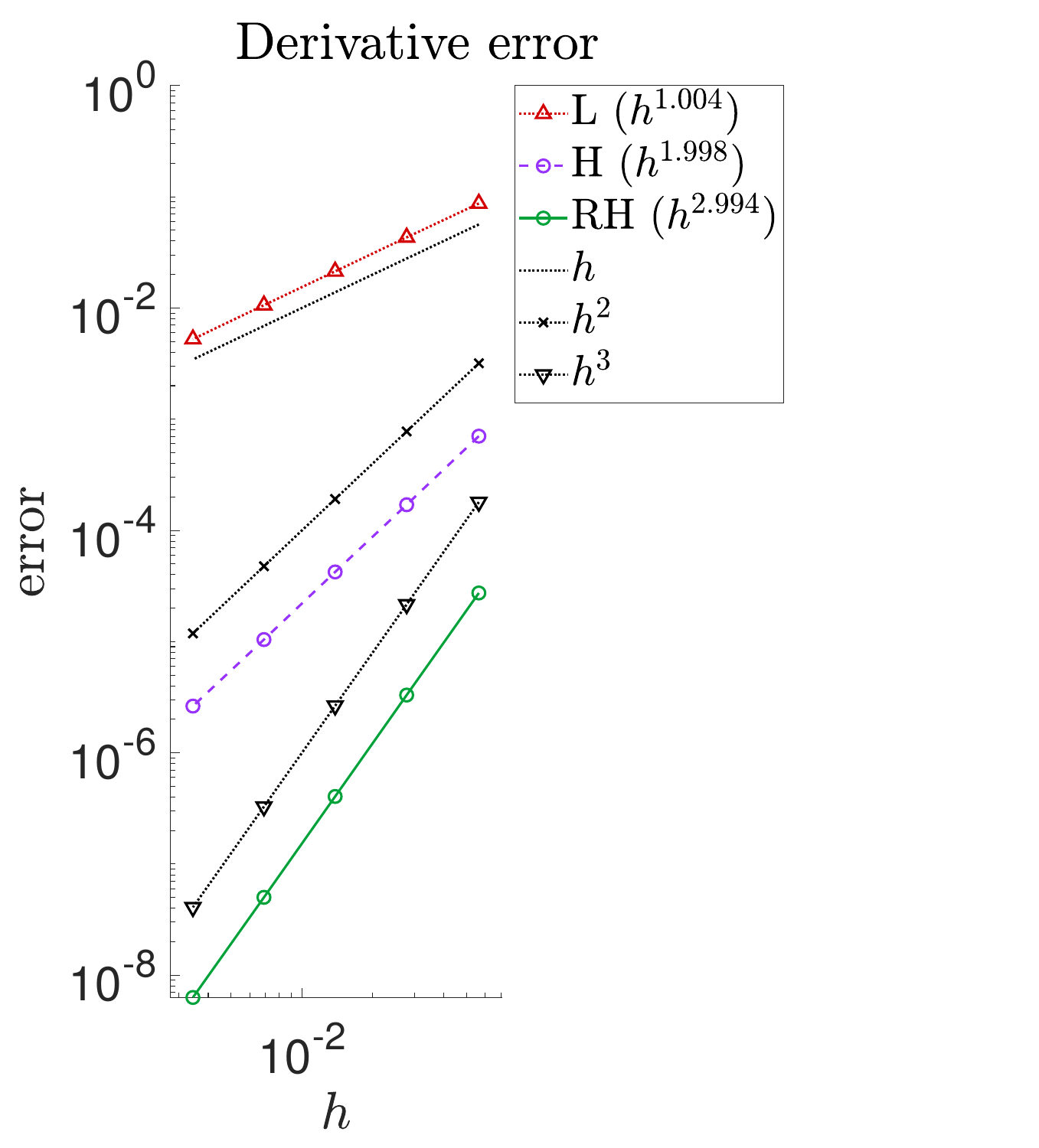}
			\subcaption{Q-factor interpolation.}
			\label{fig:stiefelErrorConvergence}	
		\end{subfigure}
		\begin{subfigure}{0.495\linewidth}
			\includegraphics[height=5.98cm,trim=1.425cm 0.39cm 5.63cm 0.5cm, clip]{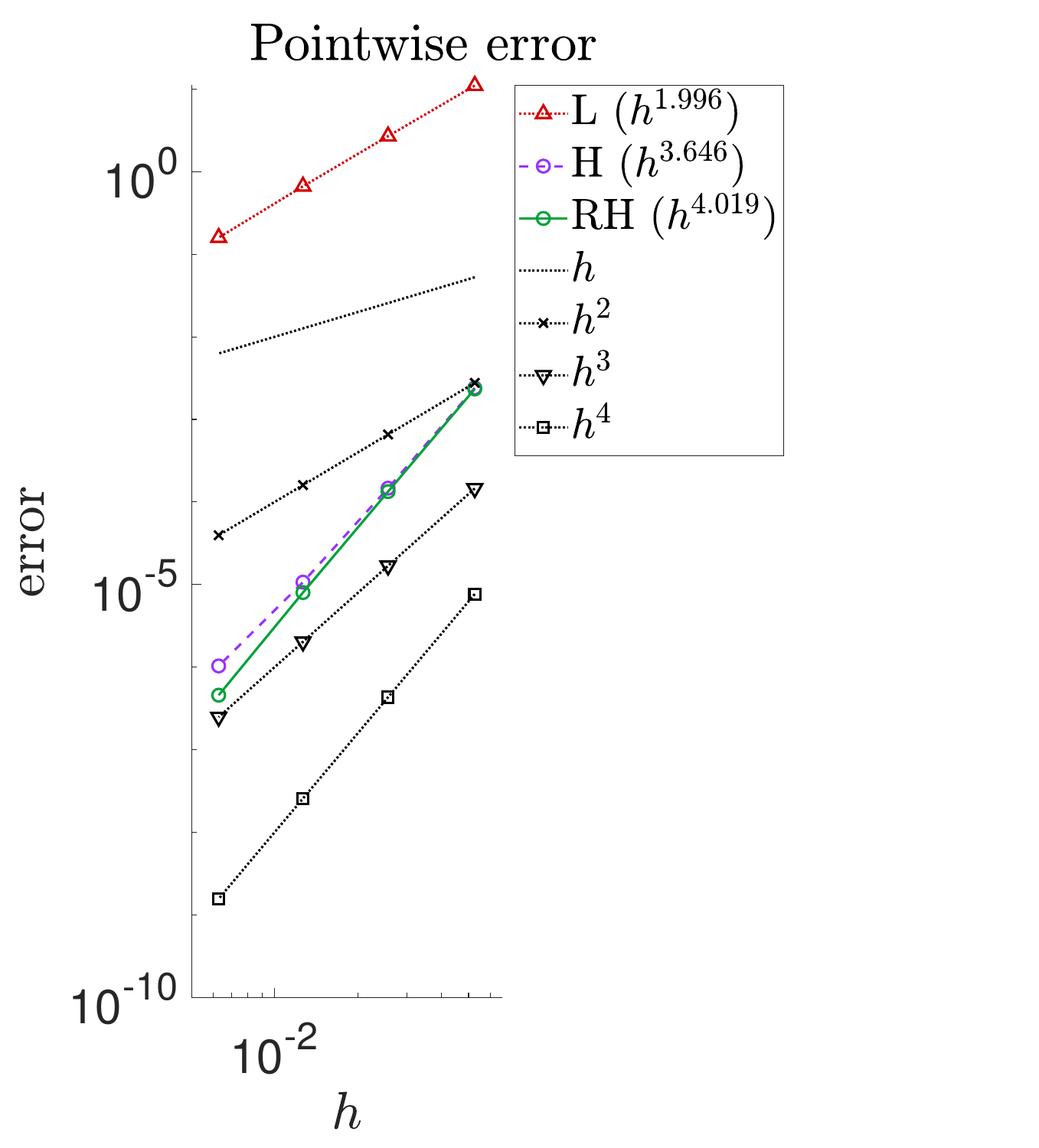}
			\includegraphics[height=5.98cm,trim=1.425cm 0.39cm 5.63cm 0.5cm, clip]{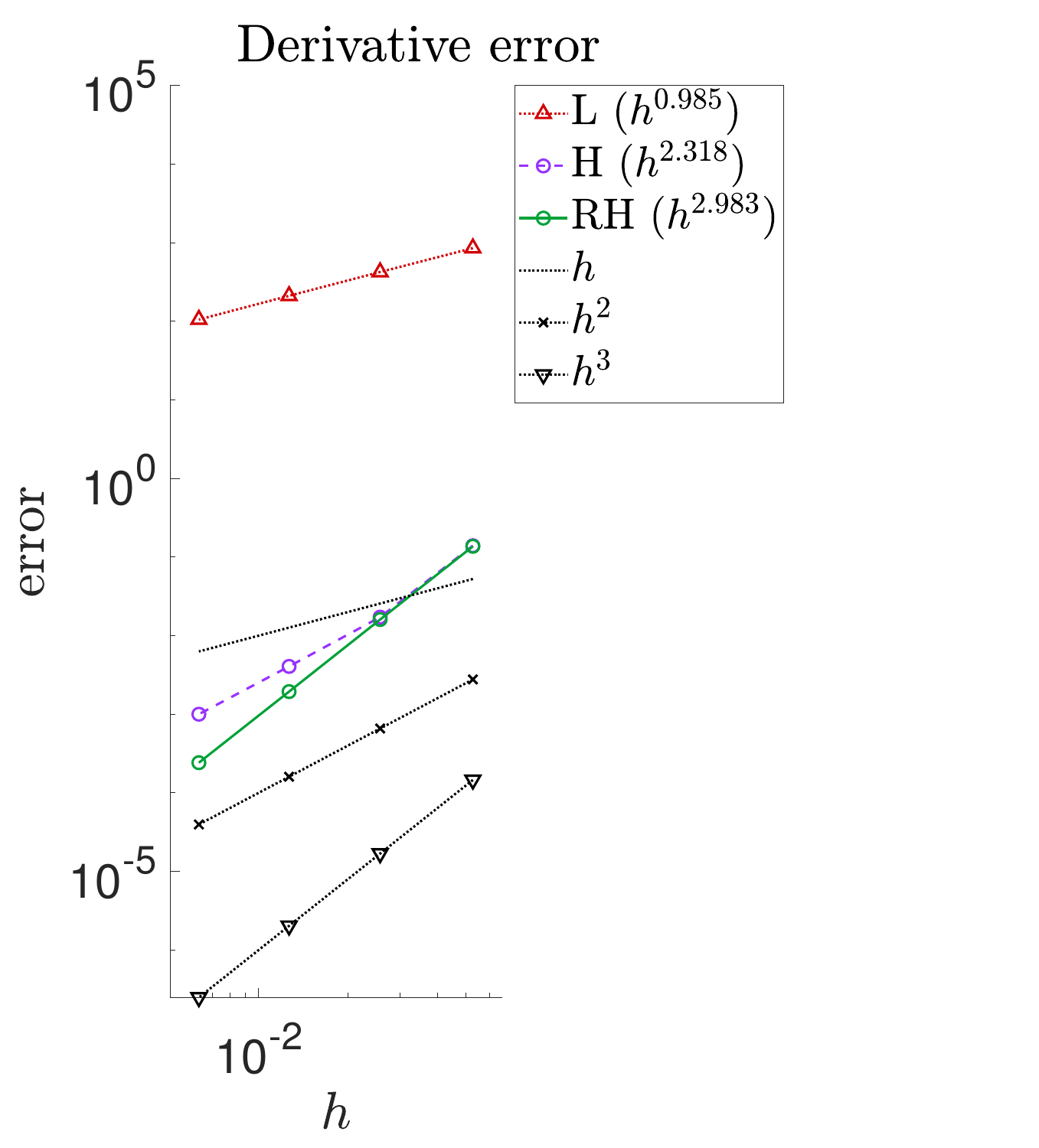}
			\subcaption{SVD interpolation.}
			\label{fig:lowRankErrorConvergence}	
		\end{subfigure}
		\caption{Interpolation error as a function of the sampling step size $h$ for different retraction-based schemes: linear (L) and Hermite (H), as defined in Section~\ref{sss:otherSchemes}, and the RH scheme. }
	\end{figure}
	\begin{table}[]
		\centering
		\caption{Average time per evaluation for the Q-factor and SVD interpolation experiments of Figure~\ref{fig:stiefelFullError} and Figure~\ref{fig:lowRankFullError}. In the last two lines we distinguish the naive and the optimized offline/online implementations of the RH scheme (see Algorithms~\ref{alg:offlinePhase} and~\ref{alg:onlinePhase}). }
		\label{tab:CPUTimes}
		\begin{tabular}{c|cc|c|}
			\cline{2-4}
			& \multicolumn{2}{c|}{$Q$-factor interpolation} & SVD interpolation \\ \hline
			\multicolumn{1}{|c|}{Retraction used} & \multicolumn{1}{c|}{$Q$-factor} & $P$-factor & Orthographic \\ \hline 
			\multicolumn{1}{|c|}{Linear (L)} & \multicolumn{1}{c|}{$1.524\cdot10^{-4}$} & $2.049\cdot 10^{-4}$ & $1.710\cdot 10^{-3}$ \\ \hline
			\multicolumn{1}{|c|}{Hermite (H)} & \multicolumn{1}{c|}{$1.540\cdot 10^{-3}$} & $1.596\cdot 10^{-3}$ & $1.147\cdot 10^{-2}$ \\ \hline
			\multicolumn{1}{|c|}{Retraction Hermite (RH)} & \multicolumn{1}{c|}{$4.335\cdot 10^{-3}$} & $3.531\cdot 10^{-3}$ & $1.992\cdot 10^{-2}$ \\ \hline
			\multicolumn{1}{|c|}{Retraction Hermite (RH, optimized)} & \multicolumn{1}{c|}{$2.226\cdot 10^{-3}$} & $2.188\cdot 10^{-3}$ & $1.558\cdot 10^{-2}$ \\ \hline
		\end{tabular}
	\end{table}
	
	\subsubsection{SVD interpolation}
	
	We interpolate the singular value decomposition of a matrix curve of constant low rank. We consider $m = 10^4$, $n = 300$ and rank $r = 10$. We take 
	\begin{align*}
		Y(t) = Y_0 + t Y_1 + t^2 Y_2 + t^3 Y_3, \quad Y_i\in \matr{m}{r} \\
		Z(t) = Z_0 + t Z_1 + t^2 Z_2, \quad Z_i \in \revision{\matr{n}{r}}, 
	\end{align*}
	where the entries of $Y_0, Z_0$ and $Y_1,Y_2,Y_3,Z_1,Z_2$ are drawn \revision{from} uniform random distributions on $\pac{0,1}$ and $\pac{0,0.5}$, respectively. Since the factors are generically \revision{of} full rank, the curve 
	\begin{equation}\label{eq:svdInterpInstance}
		W(t) = Y(t) Z(t)^\top,\quad t\in\pac{-0.5,0.5}
	\end{equation}
	is of rank $r$ for every $t$. This example has also been taken from \cite[\S 5.3]{zimmermann} together with the suggestion of Remark 6 \revision{from~\cite{zimmermann}}  to ensure the smoothness of the computed SVD decomposition path $U(t)\Sigma(t) V(t)^\top = W(t)$. Note that this may cause negative values in the diagonal term.  
	
	The experimental results for the SVD path of~\eqref{eq:svdInterpInstance} are reported in Figures~\ref{fig:lowRankFullError} and~\ref{fig:lowRankErrorConvergence}. The comments are analogous to the one we made for the Q-factor interpolation experiments: only the RH scheme manages to match the prescribed derivatives at the nodes thereby producing the expected $O(h^4)$ error convergence trend. For these experiments, the naive retraction-based Hermite interpolation scheme H produces a good approximation in terms of error, practically as good as the RH scheme. However, the curve H is not globally $C^1$.
	
	The computation time per evaluation for the different schemes is reported in Table~\ref{tab:CPUTimes}. Interestingly, while for the Stiefel manifold, a naive implementation of the RH scheme is approximately two times slower than the offline/online approach, for the low-rank manifold, the non-optimized code is only 30\% more expensive. We attribute this to the fact the inverse orthographic retraction is relatively cheap compared to the retraction and so the few inverse retractions spared by the offline/online implementation do not pay off as much. 
	\subsubsection{The need for bounded derivatives}\label{ss:needForBoundedDerivatives}
	The fourth-order convergence achieved by the RH interpolation scheme was proved in Theorem~\ref{theo:convergenceOrder} under the assumption that all derivatives up to order 4 of the interpolation curve remain bounded as  $h\to 0$, see~\eqref{eq:boundednessOfKthDerivOfHh}. As we now illustrate, this assumption can in fact not be removed. Recall that the RH interpolation scheme was built to satisfy Proposition~\ref{prop:retractionDeCast} by making the choice~\eqref{eq:bestChoiceOfr1r01r12r012}. However, this choice was not unique and was motivated by the need to alleviate evaluation cost. It turns out that choice~\eqref{eq:bestChoiceOfr1r01r12r012} is also important because it satisfies the bounded derivatives assumption~\eqref{eq:boundednessOfKthDerivOfHh}. In Figure~\ref{fig:maxDerivUnbounded} we plot the maximum norm of the second, third and fourth order derivatives as a function of the sampling size $h$ for the RH scheme and for an alternative scheme denoted RH$^*$ where choice~\eqref{eq:bestChoiceOfr1r01r12r012} is modified with $r_1 = 0$ instead of $r_1 = 1/2$. It appears that unlike the RH scheme, the RH$^*$ scheme features a fourth derivative diverging as $O(h^{-1})$. Despite producing a continuously differentiable curve which interpolates the derivatives, the alternative scheme loses one order of accuracy as can be seen from Figure~\ref{fig:errorConvergenceBad}. These experiments were conducted on the Q-factor interpolation of~\eqref{eq:qfactorInterpolationInstance} on the Stiefel manifold but analogous results were found on the SVD interpolation instance. 
	
	\begin{figure}
		\centering
		\begin{subfigure}{0.49\linewidth}
			\centering
			\includegraphics[height=3.9cm,trim=0 0 1.05cm 0, clip]{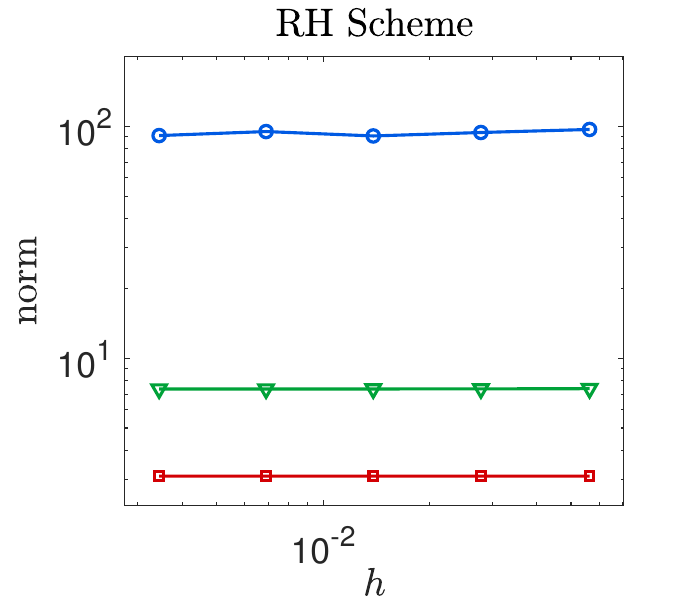} \includegraphics[height=3.9cm,trim=0.45cm 0 0.9cm 0, clip]{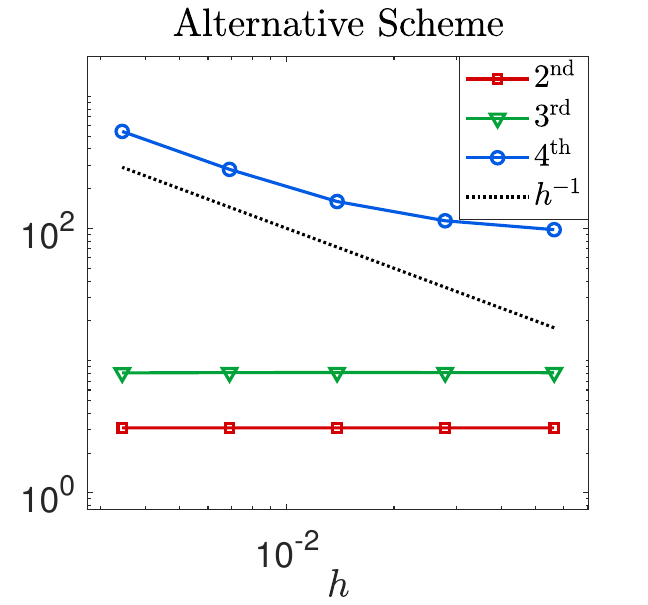}
			\caption{Maximum derivative vs $h$.}
			\label{fig:maxDerivUnbounded}
		\end{subfigure}
		\begin{subfigure}{0.49\linewidth}
			\centering
			\includegraphics[height=3.9cm,trim=0 0 5.5cm 0, clip]{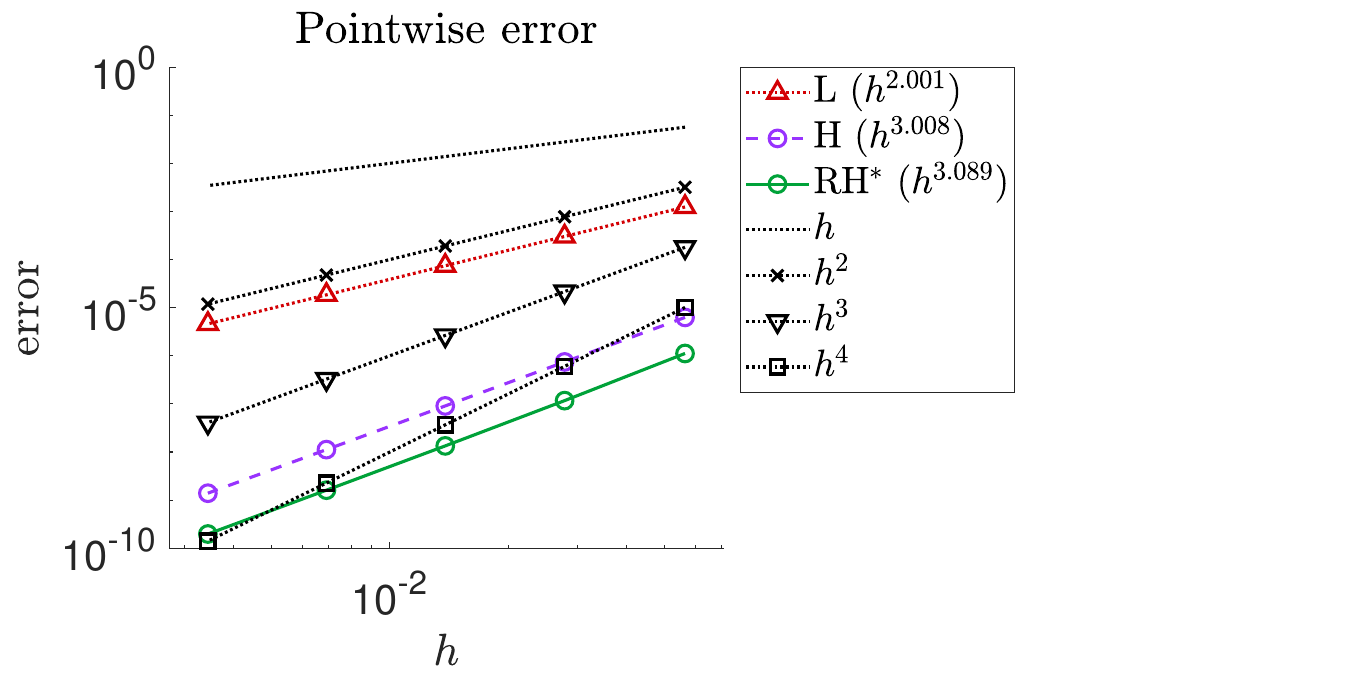}
			\caption{Alternative scheme error convergence.}
			\label{fig:errorConvergenceBad}
		\end{subfigure}
		\caption{Experiments with an alternative interpolation scheme still satisfying Proposition~\ref{prop:retractionDeCast} but failing to satisfy assumption~\eqref{eq:boundednessOfKthDerivOfHh}.}
	\end{figure}
	
	\revision{Among the admissible choice determined by Proposition~\ref{prop:retractionDeCast}}, the choice~\eqref{eq:bestChoiceOfr1r01r12r012} is not the only one satisfying assumption~\eqref{eq:boundednessOfKthDerivOfHh}. For instance, for any choice of $r_1\in\pac{0,1}$ if we take $r_{01}(s,t) = (1-r_1)t$, $r_{12}(s,t) = (1-r_1) + r_1 t$ and $r_{012}(u,s,t) = t$ we still achieve $O(h^4)$ error convergence. However, the evaluation cost of such schemes is higher and among those that we could find, choice~\eqref{eq:bestChoiceOfr1r01r12r012} was the cheapest \revision{as its evaluation requires the least number of retraction and inverse retractions}. The relationship between the choice of these functions and the fourth derivative of the scheme is intricate and we could not establish an a priori criterion to discriminate between schemes satisfying~\eqref{eq:boundednessOfKthDerivOfHh} and those violating it.
	
	\subsection{Applications}
	In this section, we illustrate two possible applications of the RH interpolation scheme. We focus on applications involving the fixed-rank manifold as we believe they are the most relevant.
	
	\subsubsection{Riemannian continuation}
	
	As a first application, we propose an extension of the numerical continuation \revision{method for Riemannian optimization from~\cite{myFirstPaper}. This application considers the minimization of a scalar target function $f(x,\lambda)$ for $x \in \mathcal M$ smoothly parametrized by $\lambda \in [0,1]$. For example, this is useful when the 
	minimum is simple to obtain for $\lambda = 0$ and one successively finds the minimum for increasing values of $\lambda$ until one reaches $\lambda = 1$, corresponding to the minimization problem of interest. Suppose one has computed solutions $\ldots, x_{k-1}, x_k$ at previous parameter instances $\ldots, \lambda_{k-1}, \lambda_k$ the computation of $x_{k+1}$ at $\lambda_{k+1} = \lambda_k + h_k$ proceeds in two steps. The prediction step aims to attain a relatively cheap, yet accurate approximation $y_{k+1}$. This is followed by a standard Riemannian optimization method, warm started with the excellent initial guess $y_{k+1}$.
	}
	 The prediction step of the continuation algorithm can be characterized by the so-called prediction order~\cite[Definition 3.2]{myFirstPaper}. It captures how accurately the prediction approximates the solution to the next problem. Theorem~\ref{theo:convergenceOrder} indicates that using the RH interpolation scheme in the prediction step produces a prediction order $p_{RH} = 4$ compared to $p_C = 1$ and $p_T = 2$ for the classical and tangential prediction schemes \revision{discussed in~\cite{myFirstPaper}}. 
	%In practice, in the notation of~\cite[Algorithm 3.2]{myFirstPaper} with
	\revision{Choosing} constant step size $h_k = h$ for every $k\geq 0$, \revision{and introducing the notation $\tau\mapsto H(\tau;\paa{p_0,v_0,\tau_0}, \paa{p_1,v_1,\tau_1})$ to indicate the RH interpolant passing through $p_i$ with velocity $v_i$ at $\tau=\tau_i$, for $i = 0,1$,}  we define the RH prediction step as 
	\begin{equation*}
		y_{k+1} = \begin{cases}
			R_{x_{k}}\pa{h t_k}\quad &k = 0,\\
			H(2h;\paa{x_{k-1},t_{k-1},0} \paa{x_k,t_k,h}),\quad &k\geq 1,
		\end{cases}
	\end{equation*}
	\revision{For $k = 0$, this is standard tangential prediction and, for $k\ge 1$, we employ a RH interpolant and to provide an initial guess to the next problem. Note that the RH interpolant itself is constructed on the interval $\pac{0,h}$ as specified by Corollary~\ref{cor:hermiteRetrFullSolution}. Yet, provided the retractions used in the procedure remain well-defined, we can  evaluate it outside of this interval, in particular at $\tau = 2h$. This requires evaluating the elementary endpoint curves of the de Casteljau algorithm appearing in the lines 4 to 9 of Algorithm~\ref{alg:onlinePhase} outside the interval $\pac{0,1}$.}

	The RH prediction-correction continuation algorithm with fixed step size is applied to the same low-rank matrix completion problem considered in~\cite[\S 5]{myFirstPaper}. We fix the number of steps to $N_{\text{steps}} = 5$, use the Riemannian Trust Regions (RTR) algorithm as a corrector and vary the prediction scheme. We report in \revision{T}able~\ref{tab:rhContinnuation} a comparison of the computational effort required to solve the problem with each scheme. There are two factors that determine the performance of the algorithm. First, the more ill-conditioned the final optimization problem is, the more the last RTR correction will encounter stagnation. Second, the more the underlying solution curve to the family of optimization problem is smooth, the more tangential and RH prediction will pay off. In fact, high prediction order will be achieved only when the underlying solution curve to the family of optimization problems is sufficiently smooth. Partly due to the choice of the homotopy, the solution curve often exhibited discontinuities thereby undermining the efficiency of tangential and RH prediction. Because of this, these two schemes were on average slower than classical prediction. However, when the underlying solution curve happens to be smooth, e.g. for the instance of Figure~\ref{fig:rhContinuation}, the RH prediction benefits the increased prediction accuracy. As can be seen from the computed medians for computation time and total RTR iteration count, the RH can significantly reduce the computational effort compared to classical and tangential prediction. 
	\begin{figure}
		\includegraphics[width=0.495\linewidth]{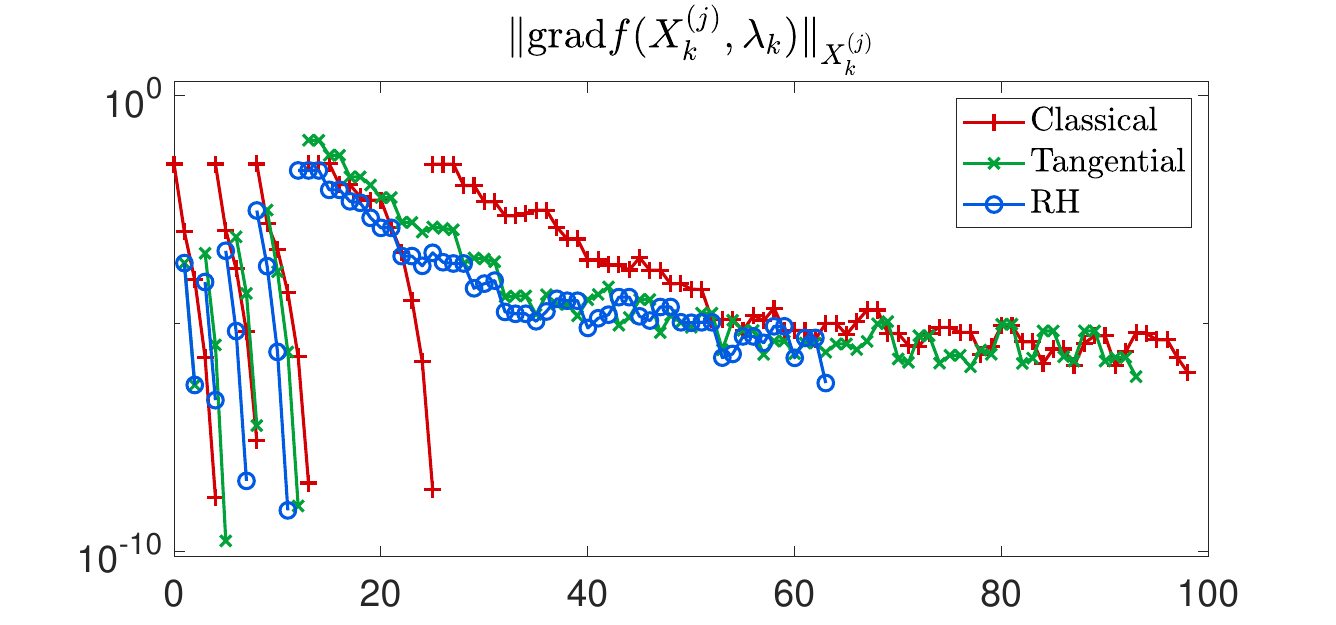}
		\includegraphics[width=0.495\linewidth]{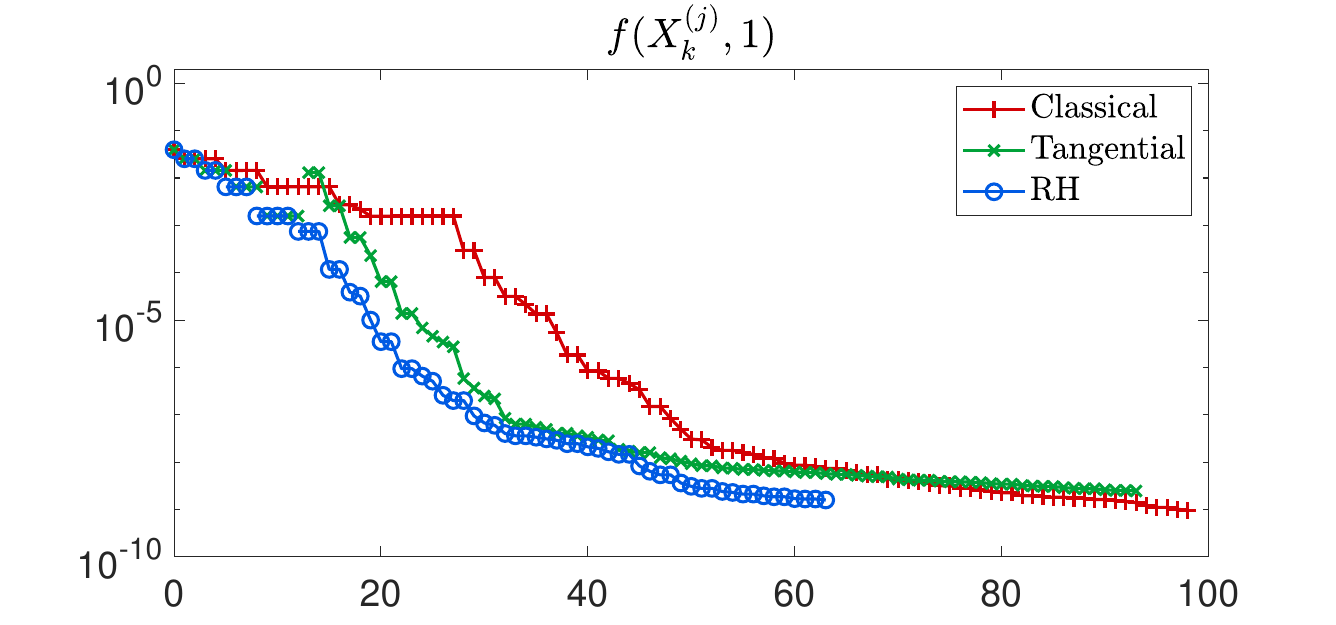}
		\caption{RH continuation algorithm on a favorable instance of matrix completion. Left: Riemannian gradient of the sequence of problems versus total iteration count. Right: objective function value of the final optimization problem versus total iteration count.}
		%Discontinuous portions of the curves with the same color indicate different problems in the sequence of optimization problems. 
		\label{fig:rhContinuation}
	\end{figure}
	\begin{table}
		\caption{Performance statistics of the Riemannian continuation algorithm for three different prediction schemes over 100 randomly generated instances of the matrix completion problem. }
		\label{tab:rhContinnuation}
		\centering
		\revision{\begin{tabular}{c|ccc|ccc|}
			\cline{2-7}
			& \multicolumn{3}{c|}{Time (s)} & \multicolumn{3}{c|}{Correction steps (RTR)} \\ \cline{2-7} 
			& \multicolumn{1}{c|}{Mean} & \multicolumn{1}{c|}{Median} & Std. dev. & \multicolumn{1}{c|}{Mean} & \multicolumn{1}{c|}{Median} & Std. dev. \\ \hline
			\multicolumn{1}{|c|}{Classical} & \multicolumn{1}{c|}{3.512} & \multicolumn{1}{c|}{3.435} & 4.035 & \multicolumn{1}{c|}{59.56} & \multicolumn{1}{c|}{60} & 20.10  \\ \hline
			\multicolumn{1}{|c|}{Tangential} & \multicolumn{1}{c|}{7.919} & \multicolumn{1}{c|}{4.706} & 6.735 & \multicolumn{1}{c|}{69.95} & \multicolumn{1}{c|}{53} & 66.16  \\ \hline
			\multicolumn{1}{|c|}{RH} & \multicolumn{1}{c|}{3.770} & \multicolumn{1}{c|}{2.588} & 9.159 & \multicolumn{1}{c|}{48.72} & \multicolumn{1}{c|}{45} & 79.69 \\ \hline
		\end{tabular}}
	\end{table}
	
	\subsubsection{Dynamical-low-rank approximation interpolation}
	
	Dynamical low-rank approximation~\cite{dlraRefPaper} (DLRA) is a \revision{numerical integration technique for solving a matrix differential equation of the form
	\begin{equation} \label{eq:mldra}
		\dot Y = F(Y,t), \quad Y(0) = Y_0, \quad Y(t) \in\matr{m}{n}, \quad t\in\pac{0,t_{\text{end}}}.
	\end{equation}
	Such equations arise, for example, from the structured discretization of two-dimensional partial differential equations. DLRA assumes that $Y(t)$ admits, for every $t$, an accurate rank-$k$ approximation with $k \ll \min\{m,n\}$. In particular, this allows one to replace $Y_0$ by its best rank-$k$ approximation $\tilde Y_0$ (using the singular value decomposition). Letting $\mathcal M_k$ denote the manifold of rank-$k$ matrices, one replaces the dynamics~\eqref{eq:mldra} by
	$\dot{ \tilde Y} = \Pi_{\tilde Y}(F(\tilde Y,t))$, where $\Pi_{\tilde Y}$ denotes the orthogonal projection onto the tangent space $T_{\tilde Y}\mcal_k$, to 
	ensure that $\tilde Y(t)$ remains on $\mathcal M_k$. This can significantly reduce the computational cost and memory requirements for numerical integration. In practice, some care is required in order to integrate the reduced differential equation on the manifold accurately and efficiently; see, e.g.,~\cite{unconventionalIntegrator}. Such an integrator returns} a sequence $\paa{\tilde Y_i}_{i = 0}^{N_t}\subset \mcal_k$, such that $\tilde Y_i\simeq Y(t_i)$, the solution at time $t_i$ of the matrix differential equation.  During the numerical integration, for each $\tilde Y_i$ we can store the best approximation of the vector $F(Y_i,t_i)$ on the tangent space $T_{\tilde Y_i}\mcal_k$, obtained as $\tilde V_i := \Pi_{\tilde Y_i}\pa{F(\tilde Y_i,t_i)}$. Then, the collection of triplets $\paa{\pa{\tilde Y_i,\tilde V_i,t_i}}_{i=0}^{N_t}$ can be fed to our RH interpolation scheme to obtain a continuously differentiable curve on $\mcal_k$ that approximates the best rank $k$ solution for every time $t$. Given the high accuracy of the interpolation scheme, one can expect that it is sufficient to interpolate a small fraction of the time samples to obtain a satisfactory approximation of the full solution curve. 
	
	\begin{figure}
		\centering
		\includegraphics[width=0.8\linewidth]{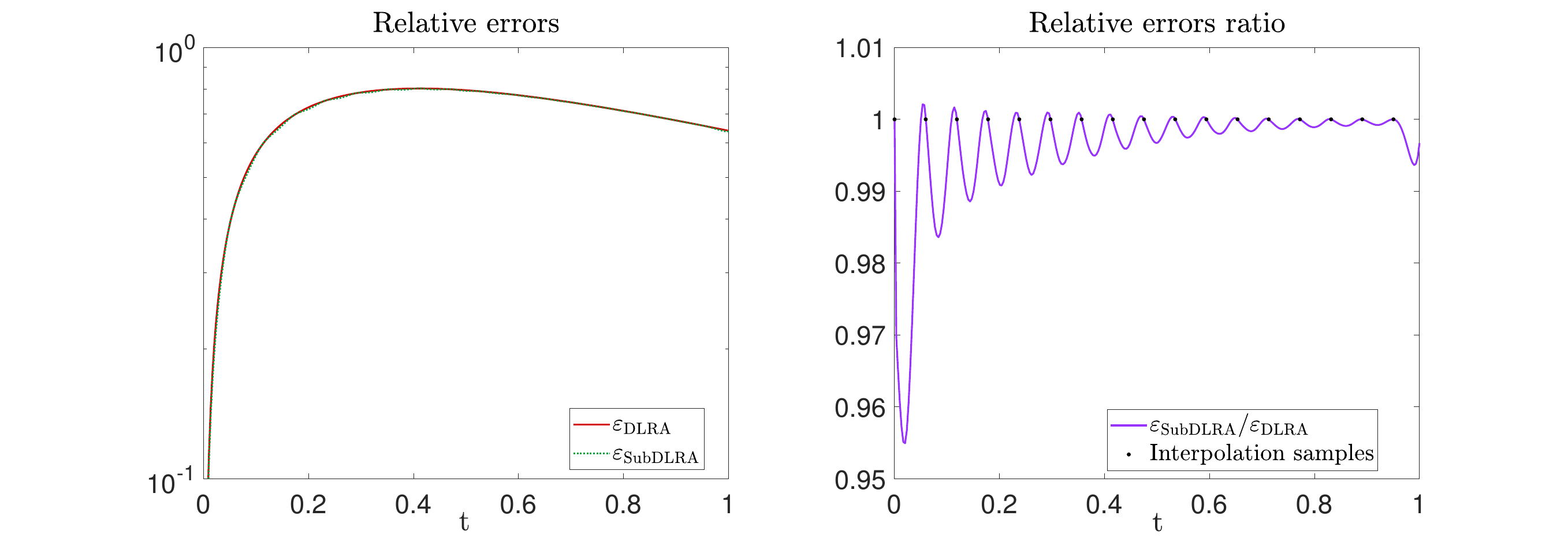}
		\caption{Relative error with reference solution of the interpolation of DLRA samples. The error $\varepsilon_{\text{DLRA}}$ corresponds to interpolating all samples whereas $\varepsilon_{\text{SubDLRA}}$ only 1 out of 20 samples.}
		\label{fig:subSamplingDLR}
	\end{figure}

	We tested this hypothesis on the test case of \cite[\S 7.1]{gianlucaPaperDLR} for which the DLRA integrator is the so-called unconventional time integrator \cite{unconventionalIntegrator}, which features improved stability properties compared to the matrix-projector splitting integrator. For conciseness, we refer the reader to these two references for details on the radiation transport equation at hand and on the numerical integrator. The parameters for the experiments are the same as in \cite[\S 7.1]{gianlucaPaperDLR}: the integration is done on $\mcal_k\subset\matr{m}{n}$ with $m = 800$, $n = 100$ and $k = 15$. The only difference is that we turn off the rank adaptivity option of the integrator since we want the initial condition and all iterates to remain on $\mcal_k$. In order to obtain a rank 15 initial condition, we run the rank adaptive version starting from the rank 1 initial condition used in \cite[\S 7.1]{gianlucaPaperDLR} and store the first sample of rank 15. From this initial condition, we run the unconventional DLRA integrator with a step size \revision{$\Delta t = 3 \times 10^{-3}$ for $N_t$ steps up to $T = 1$}. This yields $\paa{\tilde Y_i}_{i = 0}^{N_t}\subset \mcal_k$. We then obtain a reference solution by performing a forward Euler integration of the ODE in $\matr{m}{n}$ with a step size 20 times smaller than the unconventional integrator. These samples are then projected to $\mcal_k$ with the $k$-truncated SVD, and we denote them $\paa{Y_j}_{j  = 0}^{20N_t}$. Finally, we assemble three RH interpolants:  
	\begin{align*}
		\tilde Y(t) &= H\pa{t;\paa{\pa{\tilde Y_i,\tilde V_i, t_i}}_{i = 0}^{N_t}},\\
		\hat Y(t) &= H\pa{t;\paa{\pa{\tilde Y_{20l},\tilde V_{20l}, t_{20l}}}_{l = 0}^{\floor{N_t / 20}}},\\
		Y(t) &= H\pa{t;\paa{ \pa{Y_j, V_j, t_j}}_{j = 0}^{20 N_t}}
	\end{align*}
	where tangent vectors $\tilde V_i$ and $V_i$ are obtained as explained previously. The curve $\hat Y$ interpolates one every 20 samples of the DLRA solution, so roughly 5\% of the integrator's output. Yet, as can be seen from Figure~\ref{fig:subSamplingDLR}, the relative errors
	\begin{equation*}
		\varepsilon_{\text{DLRA}}(t) = \frac{\norm{\tilde Y(t) - Y(t)}_F}{\norm{Y(t)}_F},\quad \varepsilon_{\text{SubDLRA}}(t) = \frac{\norm{\hat Y(t) - Y(t)}_F}{\norm{Y(t)}_F},
	\end{equation*}
	are almost identical. Surprisingly, the sub-sampled interpolation curve $\hat Y$ can be more precise, though from a negligible amount. The real advantage comes in terms of storage requirements: the information needed to evaluate the sub-sampled interpolation curve $\hat Y$ occupies 20 times less memory than the information for the curve $\tilde Y$ and approximately 4 times less storage than the full collection of samples $\paa{\tilde Y_i}_{i = 0}^{N_t}$. This application of RH interpolation can be thought as a compression post-processing that enhances the portability of DLRA solutions.

	\section{Conclusion}
	
	We have proposed a manifold interpolation technique to address Hermite interpolation of manifold curves. The method is general enough to be applicable to every manifold for which a retraction/inverse retraction pair is available, thereby avoiding the need for Riemannian exponential and logarithmic maps used by other interpolation schemes that take into account derivative information. 
	
	The novel notion of retraction-convex sets ensures the well-posedness of the method, provided that the interpolation data is sufficiently close. Theorem~\ref{theo:convergenceOrder} generalizes to our scheme the classical interpolation error convergence result for polynomial Hermite interpolation in Euclidean spaces. The predicted $O(h^4)$ convergence trend has also been experimentally observed for academic interpolation problems on the Stiefel manifold and the fixed-rank matrix manifold. The high order accuracy of the method allowed us to propose an improvement to the prediction-correction Riemannian continuation algorithm of~\cite{myFirstPaper} and to suggest a strategy to compress the output of dynamical low-rank matrix integrators. 
	
	Just like curve interpolation is a basic tool in many context of numerical analysis, we believe the RH interpolation scheme could serve as a building block for many other numerical methods on manifolds, for example the numerical integration of manifold ODEs.

	\paragraph{Acknowledgements} The authors would like to thank Ralf Zimmerman for helpful discussions on the topic and Gianluca Ceruti for suggesting the DLRA application as well as providing the code for the unconventional integrator from~\cite{gianlucaPaperDLR}. The authors thank the reviewer for very helpful remarks.
	
	{\small\noindent\textbf{Conflicts of interest}$\:\:$  The authors have no conflicts of interest to declare that are relevant to the content of this article}.
	
    \bibliography{bibliographyWithBibTex}

    \appendix
    
    \section{Lipschitz continuity of the retraction}\label{app:lipschitzContinuityOfRetraction}
    
    In this section, we establish the Lipschitz continuity properties of retractions stated in Section~\ref{ss:lipschitzContinuityOfRetraction} and used in Appendix~\ref{app:lipschContIndepOfH} below.
    
    Recall that Proposition~\ref{prop:retractionDiffeom} yields a suitable domain $\tcal\subset T\mcal$ of invertibility of the retraction. We set ${\tcal_x := \piOnT{x}\pa{\tcal}}$
    and $\ical_x := R_x(\tcal_x)$ for $x\in\mcal$. For any $\varepsilon \in \pa{0,1}$ we define 
    \begin{equation}
    	\tcal^\varepsilon := \paa{(x,v)\in T\mcal :  \|v\|_x<\varepsilon\Delta(x)},
    \end{equation} 
    an open set that satisfies $\overline{\tcal^\varepsilon} \subset \tcal$. Similarly, we set $\tcal_x^\varepsilon := \pi_{T_x\mcal}(\tcal^\varepsilon)$ and $\ical_x^\varepsilon = R_x(\tcal_x^\varepsilon)$.
    
    We consider differentials of the retraction and inverse retraction with respect to their arguments. Given $x\in\mcal$, $u\in\tcal_x$, $v\in T_x\mcal$, $y\in\ical_x$ and $w\in T_y\mcal$ we denote
    \begin{align}
    	\D_1 R_x(u)[v] &= \ddt{}{t}R_{\sigma_{x,v}(t)}(u)\restr{t = 0}\in T_{R_x(u)}\mcal,\\
    	\D_2 R_x(u)[v] &= \ddt{}{t}R_x(u + tv)\restr{t = 0}\in T_{R_x(u)}\mcal,\\
    	\D_1 R_x^{-1}(y)[v] &=  \ddt{}{t}R_{\sigma_{x,v}(t)}^{-1}(y)\restr{t = 0}\in T_x\mcal,\\
    	\D_2 R_x^{-1}(y)[w] &=  \ddt{}{t}R_{x}^{-1}(\sigma_{y,w}(t))\restr{t = 0}\in T_x\mcal,
    \end{align}
    where $\sigma_{x,v}$ is any continuously differentiable manifold curve defined in a neighborhood of $t = 0$ such that $\sigma_{x,v}(0) = x$, $\dot\sigma_{x,v}(0) = v$.
    
    \begin{proposition}\label{prop:lipschRetr}
    	Consider arbitrary $\varepsilon\in\pa{0,1}$. For any $x\in\mcal$, there exist positive constants $L_1(x,\varepsilon)$, $M_1(x,\varepsilon)$, $L_2(x,\varepsilon)$ and $M_2(x,\varepsilon)$ such that the retraction $R$ satisfies 
    	\begin{enumerate}[label=(\roman*)]
    		\item \label{prop:lipschR1} 
    		$
    		\norm{\D_1 R_x(u)[v]}_{R_x(u)} \leq L_1(x,\varepsilon)\|v\|_x
    		$ for every $u\in\tcal_x^\varepsilon$ and $v\in T_x\mcal$
    		\item \label{prop:lipschR2} $\norm{\D_2 R_x(u)[v]}_{R_x(u)}\leq L_2(x,\varepsilon) \|v\|_x$ for every $u\in\tcal_x^\varepsilon$ and $v\in  T_uT_x\mcal\simeq T_x\mcal$,
    		\item \label{prop:lipschRinv1} $\norm{\D_1 R_x^{-1}(y)[v]}_x \leq M_1(x,\varepsilon)\|v\|_x$ for every $y\in\ical_x^\varepsilon$ and $v\in T_x\mcal$,
    		\item \label{prop:lipschRinv2} $\norm{\D_2 R_x^{-1}(y)[v]}_x\leq M_2(x,\varepsilon) \|v\|_y$ for every $y\in \ical_x^\varepsilon$ and $v\in T_y\mcal$.
    	\end{enumerate}
    \end{proposition}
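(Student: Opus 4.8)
The plan is to obtain all four estimates from one observation: on its domain $\tcal$ the retraction $R$ is smooth, and on the corresponding set $E(\tcal)$ the inverse retraction $R^{-1}$ is smooth jointly in both of its arguments (as noted after Proposition~\ref{prop:retractionDiffeom}); since $\overline{\tcal^\varepsilon}\subset\tcal$, the partial differentials appearing in \ref{prop:lipschR1}--\ref{prop:lipschRinv2} are well-defined and depend continuously on their arguments in a neighbourhood of the fibres $\overline{\tcal_x^\varepsilon}$ and $\overline{\ical_x^\varepsilon}$, and each constant is then produced as the maximum of the relevant operator norm over one of these fibres. To set this up, fix $x\in\mcal$ and $\varepsilon\in(0,1)$. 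Because $\varepsilon<1$, the fibre $\overline{\tcal_x^\varepsilon}=\paa{v\in T_x\mcal:\norm{v}_x\le\varepsilon\Delta(x)}$ is a closed ball of the finite-dimensional space $T_x\mcal$, hence compact, and it is contained in $\tcal_x$; since $R_x:\tcal_x\to\ical_x$ is a diffeomorphism, $\overline{\ical_x^\varepsilon}=R_x(\overline{\tcal_x^\varepsilon})$ is a compact subset of $\ical_x$ as well.

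For \ref{prop:lipschR2} and \ref{prop:lipschRinv2}, which only involve $R_x$ and $R_x^{-1}$ as maps between fixed tangent spaces, I would argue that $u\mapsto\operatornorm{\D R_x(u)}$ is continuous on $\tcal_x$ — this is seen by writing $R_x$ in a local chart around $R_x(u)$ and using smoothness of the metric — and set $L_2(x,\varepsilon):=\max_{u\in\overline{\tcal_x^\varepsilon}}\operatornorm{\D R_x(u)}$, which is finite by compactness and strictly positive since local rigidity gives $\operatornorm{\D R_x(0)}=1$ with $0\in\tcal_x^\varepsilon$; this yields \ref{prop:lipschR2}, and \ref{prop:lipschRinv2} follows identically with $M_2(x,\varepsilon):=\max_{y\in\overline{\ical_x^\varepsilon}}\operatornorm{\D R_x^{-1}(y)}$, positive because $\D R_x^{-1}(x)=\mathrm{I}_{T_x\mcal}$. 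For \ref{prop:lipschR1} and \ref{prop:lipschRinv1}, involving the differential in the base-point direction, the same scheme applies: for every $(x,u)\in\tcal$ the linear map $\D_1R_x(u):T_x\mcal\to T_{R_x(u)}\mcal$ is well-defined, $u\mapsto\operatornorm{\D_1R_x(u)}$ is continuous on $\tcal_x$, and $L_1(x,\varepsilon):=\max_{u\in\overline{\tcal_x^\varepsilon}}\operatornorm{\D_1R_x(u)}$ is finite and positive (at $u=0$, the identity $R_y(0)=y$ forces $\D_1R_x(0)[v]=v$); similarly $M_1(x,\varepsilon):=\max_{y\in\overline{\ical_x^\varepsilon}}\operatornorm{\D_1R_x^{-1}(y)}$ gives \ref{prop:lipschRinv1}.

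The one step that is not completely routine is the continuity of the base-point operator norms $u\mapsto\operatornorm{\D_1R_x(u)}$ and $y\mapsto\operatornorm{\D_1R_x^{-1}(y)}$: establishing it requires unwinding the smooth structure of the tangent bundle $T\mcal$ (respectively of $\mcal\times\mcal$), choosing a local chart of $\mcal$ near $x$ together with a smooth local orthonormal frame to trivialize $T\mcal$, and checking that in these coordinates $\D_1R_x(u)[v]$ — and hence its norm, through the smooth metric — depends smoothly on $(u,v)$. Once this is in place, compactness of $\overline{\tcal_x^\varepsilon}$ and $\overline{\ical_x^\varepsilon}$ together with continuity of the four maps delivers the finite constants, and their strict positivity is in each case read off by evaluating the differential at the origin ($u=0$, respectively $y=x$), where local rigidity turns it into the identity.
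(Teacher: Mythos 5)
Your proof is correct and follows essentially the same route as the paper's own (one-line) argument: smoothness of $R$ and $R^{-1}$, continuity of the operator norms of their differentials, and maximization of those norms over the compact sets $\overline{\tcal_x^\varepsilon}$ and $\overline{\ical_x^\varepsilon}$; you merely spell out the compactness, continuity and strict-positivity checks that the paper leaves implicit. One small imprecision: differentiating $R_y^{-1}(y)=0$ gives $\D_1 R_x^{-1}(x)=-\mathrm{I}_{T_x\mcal}$ rather than the identity, but this still has unit operator norm, so the positivity of $M_1$ is unaffected.
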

    
    \begin{proof}
    	The result follows from the smoothness of the retraction and its inverse. The Lipschitz constants are found by taking the maxima of the operator norm for the differentials of the retraction on the compact set $\overline{\tcal_x^\varepsilon}$ and of the inverse retraction on the compact set $\overline{\ical_x^\varepsilon}$.  \qed
    \end{proof}

    For simplicity of exposition, a particular choice $\varepsilon = 1/3$ is considered in Lemma~\ref{lem:lipschRetrAndInvRetr}. Note however that the result (and the upcoming proof) hold for any $\varepsilon\in\pa{0,\frac{1}{2}}$
    \begin{proof}(of Lemma~\ref{lem:lipschRetrAndInvRetr})
    	\begin{enumerate}[label=(\roman*)]
    		\item Consider any $x\in \mcal$, $u,v\in\tcal_x^{1/3}$ and define a manifold curve joining $R_x(u)$ and $ R_x(v)$ as $\delta(\tau) := R_x(u + \tau(v - u))$. It is well-defined by the convexity of $\tcal_x^{1/3}$. By definition, the manifold distance between $R_x(u)$ and $ R_x(v)$ is bounded by the length $L(\delta)$ of the curve $\delta$. Thus we have 
    		\begin{align}
    			d(R_x(u), R_x(v)) \leq L(\delta) & = \int_{0}^{1}\norm{\D_2R_x(u + \tau(v - u))\pac{v - u}}_{\delta(\tau)}\dd\tau\\
    			&\leq L_2(x,1/3) \|v-u\|_x,
    		\end{align}
    		where the final inequality follows from Proposition~\ref{prop:lipschRetr}-\ref{prop:lipschR2}. Hence $L_R(x) = L_2(x,1/3)$.
    		\item Consider $y,z\in \ical_x^{1/3}$. Since $\mcal$ is connected, by definition of the distance function we know there exists a sequence of piecewise smooth manifold curves $\gamma_k : \pac{0,1}\to\mcal$, $k\in\mathbb{N}$, such that
    		\begin{equation}
    			\gamma_k(0) = y,\quad \gamma_k(1) = z,\quad \forall k\in\mathbb{N}
    		\end{equation}
    		and
    		\begin{equation}
    			\lim_{k\to\infty} L(\gamma_k) = \infOn{k\in\mathbb{N}}\paa{L(\gamma_k)} = d(y,z).
    		\end{equation}
    		If the image of the curve $\gamma_k$, is fully contained in $\ical_x^{2/3}$, we can define the tangent space curve $q(t) := R^{-1}_x(\gamma_k(t))$ for every $t\in\pac{0,1}$ and deduce that
    		\begin{equation*}
    			\begin{aligned}	
    				\norm{R_x^{-1}(y) - R_x^{-1}(z)}_x &\leq \int_0^1 \norm{D_2R^{-1}_x(\gamma_k(\tau))\pac{\dot\gamma_k(\tau)}}_x\dd \tau\\ &\leq M_2(x,2/3)\int_{0}^1\norm{\dot \gamma_k(\tau)}_{\gamma_k(\tau)}\dd \tau,\\
    				&= M_2(x,2/3) L(\gamma_k)
    			\end{aligned}
    		\end{equation*}
    		If the image of the curve $\gamma_k$ is not fully contained in $\ical_x^{2/3}$, then let 
    		\begin{equation*}
    			\begin{aligned}				
    				t_1 &= \inf\paa{t\in\pac{0,1}: \gamma_k(t)\notin \ical_x^{2/3}},\\
    				t_2 &= \sup\paa{t\in\pac{0,1}: \gamma_k(t)\notin \ical_x^{2/3}}.
    			\end{aligned}
    		\end{equation*}
    		Therefore we can define the tangent space curve $q(t) = R^{-1}_x(\gamma_k(t))$ only for ${t\in\pacp{0,t_1}\cup\papc{t_2,1}}$. Since $y,z\in \ical_x^{1/3}$, it follows that 
    		\begin{align*}
    			\norm{R_x^{-1}(y) - R_x^{-1}(z)}_x \leq 2 \Delta(x) / 3
    		\end{align*}
    		Furthermore, the tangent space curve $q$ traverses the tangent space spherical annulus of width $\Delta(x)/3$ back and forth, hence its length must exceed $2\Delta(x)/3$, i.e.
    		\begin{equation}
    			2/3 \Delta(x) \leq \int_{\pacp{0,t_1}\cup\papc{t_2,1}}\norm{\dot q(\tau)}_x\dd \tau.
    		\end{equation}
    		Hence, we recover the same inequality as before 
    		\begin{align*}
    			\norm{R_x^{-1}(y) - R_x^{-1}(z)}_x &\leq \int_{\pacp{0,t_1}\cup\papc{t_2,1}}\norm{D_2R^{-1}_x(\gamma_k(\tau))\pac{\dot\gamma_k(\tau)}}_x\dd \tau,\\
    			&\leq M_2(x,2/3) \int_{\pacp{0,t_1}\cup\papc{t_2,1}}\norm{\dot \gamma_k(\tau)}_{\gamma_k(\tau)}\dd \tau,\\
    			&\leq M_2(x,2/3) L(\gamma_k).
    		\end{align*}
    		We have shown that for every $k\in\mathbb{N}$ 
    		\begin{equation*}
    			\norm{R_x^{-1}(y) - R_x^{-1}(z)}_x \leq M_2(x,2/3) L(\gamma_k).
    		\end{equation*}
    		Therefore the result remains true upon taking the infimum over $k\in\mathbb{N}$, which yields 		
    		\begin{equation*}
    			\norm{R_x^{-1}(y) - R_x^{-1}(z)}_x \leq M_2(x,2/3) d(x,y)
    		\end{equation*}
    		as desired and shows $M_R(x) = M_2(x,2/3)$.
    	\end{enumerate} \qed
    \end{proof}
    
    The results of Corollary~\ref{cor:lipschitzRetrOnCompact} requires to bound the local Lipschitz constant of the retraction on any compact set $K\subset\mcal$. The following more general result shows this is possible for all local Lipschitz constants of the retractions introduced in Proposition~\ref{prop:lipschRetr}.
    \begin{proposition}\label{prop:lipschitzRetrOnCompactGeneral}
    	For every compact set $K\subset\mcal$ and every $\varepsilon\in\pa{0,1}$, inequalities of Proposition~\ref{prop:lipschRetr} hold for every $x\in K$ with finite strictly positive constants $L_1(K,\varepsilon)$, $L_2(K,\varepsilon)$, $M_1(K,\varepsilon)$, $M_2(K,\varepsilon)$ depending only on $K$.
    \end{proposition}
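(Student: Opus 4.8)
The plan is to carry out the argument of Proposition~\ref{prop:lipschRetr} simultaneously for all $x\in K$, replacing the single fiber $\overline{\tcal_x^\varepsilon}$ used there by a compact ``ball bundle'' sitting above $K$. Fix a compact set $K\subset\mcal$ and $\varepsilon\in(0,1)$, and introduce
\begin{equation*}
	S_K^\varepsilon := \paa{(x,u)\in T\mcal\,:\, x\in K,\ \norm{u}_x\leq\varepsilon\Delta(x)}, \qquad \tilde S_K^\varepsilon := E\pa{S_K^\varepsilon},
\end{equation*}
with $\Delta$ and $E$ as in Proposition~\ref{prop:retractionDiffeom}. The fiber of $S_K^\varepsilon$ over a point $x\in K$ is precisely $\overline{\tcal_x^\varepsilon}$, and $\ical_x^\varepsilon\subseteq\paa{y\in\mcal:(x,y)\in\tilde S_K^\varepsilon}$, so these two sets collect all the fiberwise domains appearing in the pointwise statement. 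Since $\varepsilon<1$ and $\Delta>0$, one has $\norm{u}_x<\Delta(x)$ throughout $S_K^\varepsilon$, hence $S_K^\varepsilon\subset\tcal$; and since $E$ maps $\tcal$ diffeomorphically onto the open set $E(\tcal)\subset\mcal\times\mcal$, also $\tilde S_K^\varepsilon\subset E(\tcal)$.

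The heart of the argument is to show that $S_K^\varepsilon$ is compact. I would first note that $S_K^\varepsilon$ is closed in $T\mcal$, since $(x,u)\mapsto\norm{u}_x-\varepsilon\Delta(x)$ is continuous (the metric and $\Delta$ being continuous) and $K$ is closed. Next, $S_K^\varepsilon$ is fiberwise bounded: $C:=\varepsilon\sup_{x\in K}\Delta(x)$ is finite by compactness of $K$, so $S_K^\varepsilon\subseteq\paa{(x,u)\in T\mcal : x\in K,\ \norm{u}_x\leq C}$. Covering $K$ by finitely many charts over which $T\mcal$ trivializes, passing to a compact refinement, and using that on each such compact piece the norm $\norm{\cdot}_x$ is uniformly comparable to the Euclidean norm of the frame coordinates of the vector, one sees this last set is compact; being closed inside it, $S_K^\varepsilon$ is compact. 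It then follows that $\tilde S_K^\varepsilon=E(S_K^\varepsilon)$ is compact, as a continuous image of a compact set.

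With compactness in hand, I would set
\begin{equation*}
	L_i(K,\varepsilon) := \maxOn{(x,u)\in S_K^\varepsilon}\operatornorm{\D_i R_x(u)}, \qquad
	M_i(K,\varepsilon) := \maxOn{(x,y)\in\tilde S_K^\varepsilon}\operatornorm{\D_i R_x^{-1}(y)}, \qquad i=1,2.
\end{equation*}
Each of these four maps, $(x,u)\mapsto\operatornorm{\D_i R_x(u)}$ on $\tcal$ and $(x,y)\mapsto\operatornorm{\D_i R_x^{-1}(y)}$ on $E(\tcal)$, is continuous, by smoothness of $R$ on a neighborhood of the zero section, of $R^{-1}$ on $E(\tcal)$, and of the metric; hence the four maxima are attained and finite. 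For every $x\in K$ one has $\tcal_x^\varepsilon\subset(S_K^\varepsilon)_x$ and $\ical_x^\varepsilon\subseteq\paa{y:(x,y)\in\tilde S_K^\varepsilon}$, so the four inequalities of Proposition~\ref{prop:lipschRetr} hold at $x$ with the $K$-uniform constants $L_i(K,\varepsilon),M_i(K,\varepsilon)$ in place of the pointwise ones $L_i(x,\varepsilon),M_i(x,\varepsilon)$. Strict positivity is obtained by restricting each maximum to the zero section, where local rigidity and $R_y(0)=y$ give $\D_1 R_x(0)=\D_2 R_x(0)=\mathrm{I}_{T_x\mcal}$, $\D_2 R_x^{-1}(x)=\mathrm{I}_{T_x\mcal}$ and $\D_1 R_x^{-1}(x)=-\mathrm{I}_{T_x\mcal}$, so that each of the four constants is at least $1$.

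I expect the compactness of $S_K^\varepsilon$ to be the only step that goes beyond ``a continuous function on a compact set attains its maximum''. It rests on the standard fact that a closed, fiberwise bounded subset of a vector bundle over a compact base is compact, which I would justify through a finite trivializing atlas of $K$ as sketched above; the rest reproduces the reasoning already used for Proposition~\ref{prop:lipschRetr}.
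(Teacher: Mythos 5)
Your argument is correct and, at the level of the overall strategy, coincides with the paper's: both proofs reduce the claim to compactness of the set you call $S_K^\varepsilon$ (which is the paper's $\overline{\tcal_K^\varepsilon}$), observe that the four operator norms $\operatornorm{\D_i R}$, $\operatornorm{\D_i R^{-1}}$ are continuous on $\tcal$ and $E(\tcal)$, and take maxima. Where you differ is in how compactness of the ball bundle is established: the paper isolates this as Lemma~\ref{lem:compactSubsetsOfTcal} and proves sequential compactness by parallel-transporting the vectors $v_{n_k}$ along retraction curves into the fixed fiber $T_x\mcal$, then invoking local compactness there and controlling convergence via the Sasaki metric; you instead note that $S_K^\varepsilon$ is a closed, fiberwise-bounded subset of $T\mcal$ lying over the compact base $K$, and get compactness from a finite trivializing atlas in which the fiber metric is uniformly comparable to the Euclidean norm of the frame coordinates. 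Your route is more elementary (purely topological/chart-based, no connection or tangent-bundle metric needed) and sidesteps a small imprecision in the paper's lemma proof (the set $E(\overline{\tcal_K^\varepsilon})$ is not equal to the product $K\times\overline{\ical_K^\varepsilon}$, only contained in it). Your parameterization of the $M_i$ maxima by $(x,y)\in\tilde S_K^\varepsilon=E(S_K^\varepsilon)$ is also cleaner than the paper's "on $\overline{\ical_K^\varepsilon}$", since the operator $\D_i R_x^{-1}(y)$ genuinely depends on both arguments. The remark on strict positivity via the zero-section values $\D_1 R_x(0)=\D_2 R_x(0)=\mathrm{I}$ is a nice touch the paper leaves implicit.
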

    The proof of Proposition~\ref{prop:lipschitzRetrOnCompactGeneral} is straightforward once the following result is established.
    \begin{lemma}\label{lem:compactSubsetsOfTcal}
    	For every compact set $K\subset\mcal$ and every $\varepsilon\in\pa{0,1}$, the sets 
    	\begin{equation*}
    		\overline{\ical_K^\varepsilon}:=\underset{x\in K}{\bigcup} \overline{\ical_x^{\varepsilon}}\subset \mcal, \quad
    		\overline{\tcal_K^{\varepsilon}} := \paa{(x,v)\in \tcal: x\in K, \|v\|_x \leq  \varepsilon\Delta(x) } \subset T\mcal,
    	\end{equation*}
    	are compact sets.
    \end{lemma}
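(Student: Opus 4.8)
I would prove the compactness of $\overline{\tcal_K^{\varepsilon}}\subset T\mcal$ first, since this is the substantive part, and then deduce the compactness of $\overline{\ical_K^{\varepsilon}}\subset\mcal$ essentially for free, as the image of $\overline{\tcal_K^{\varepsilon}}$ under the (continuous) retraction. As $T\mcal$ is a manifold, hence metrizable, it suffices throughout to argue sequential compactness. So I would take a sequence $(x_n,v_n)$ in $\overline{\tcal_K^{\varepsilon}}$ and produce a subsequence converging, in $T\mcal$, to a point of $\overline{\tcal_K^{\varepsilon}}$.

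\textbf{Core argument for $\overline{\tcal_K^{\varepsilon}}$.} Since $K$ is compact, after passing to a subsequence we may assume $x_n\to x_\infty\in K$. Fix a chart $(\ucal,\phi)$ with $x_\infty\in\ucal$ together with the associated bundle trivialization $\psi_\phi:\piOnM^{-1}(\ucal)\to\phi(\ucal)\times\Rbb^D$ already used in the proof of Theorem~\ref{theo:retrConvexExistence}; for $n$ large, $x_n\in\ucal$ and $v_n$ corresponds under $\psi_\phi$ to a coordinate vector $\lambda_n\in\Rbb^D$. Choosing a compact coordinate neighbourhood $K'\subset\ucal$ of $x_\infty$ (the preimage of a small closed Euclidean ball), continuity and positive definiteness of the metric coefficients give constants $0<m\le M<\infty$ with $m\norm{\lambda}_2^2\le\norm{v}_x^2\le M\norm{\lambda}_2^2$ for every $x\in K'$ and $v\leftrightarrow\lambda$. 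Because $\Delta$ is continuous it is bounded on $K$, say $\Delta\le\Delta_{\max}$, so $\norm{v_n}_{x_n}\le\varepsilon\Delta(x_n)\le\varepsilon\Delta_{\max}$ and hence $\norm{\lambda_n}_2\le m^{-1/2}\varepsilon\Delta_{\max}$ for $n$ large. By Bolzano--Weierstrass a further subsequence satisfies $\lambda_n\to\lambda_\infty$, and then $(x_n,v_n)\to(x_\infty,v_\infty)$ in $T\mcal$ (with $v_\infty$ the tangent vector at $x_\infty$ of coordinates $\lambda_\infty$), $\psi_\phi$ being a homeomorphism onto an open subset of $T\mcal$. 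It remains to check $(x_\infty,v_\infty)\in\overline{\tcal_K^{\varepsilon}}$: clearly $x_\infty\in K$, and by continuity of $(x,v)\mapsto\norm{v}_x$ on $T\mcal$ and of $\Delta$ on $\mcal$,
\[
  \norm{v_\infty}_{x_\infty}=\lim_n\norm{v_n}_{x_n}\le\lim_n\varepsilon\Delta(x_n)=\varepsilon\Delta(x_\infty)<\Delta(x_\infty),
\]
where the strict inequality uses $\varepsilon<1$; thus $(x_\infty,v_\infty)\in\tcal$ and it satisfies the defining inequality of $\overline{\tcal_K^{\varepsilon}}$. Hence $\overline{\tcal_K^{\varepsilon}}$ is sequentially compact.

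\textbf{Deduction for $\overline{\ical_K^{\varepsilon}}$, and the main obstacle.} For each $x\in K$ the set $\overline{\tcal_x^{\varepsilon}}=\paa{v\in T_x\mcal:\norm{v}_x\le\varepsilon\Delta(x)}$ is a compact ball contained, because $\varepsilon<1$, in $\tcal_x$ on which $R_x$ is continuous; thus $R_x(\overline{\tcal_x^{\varepsilon}})$ is compact (hence closed), and a short argument using $R_x(\overline A)\subseteq\overline{R_x(A)}$ in both directions gives $\overline{\ical_x^{\varepsilon}}=R_x(\overline{\tcal_x^{\varepsilon}})$. Taking the union over $x\in K$, and noting that $\overline{\tcal_x^{\varepsilon}}\subset\tcal_x$, we obtain $\overline{\ical_K^{\varepsilon}}=R(\overline{\tcal_K^{\varepsilon}})$, the image of the compact set $\overline{\tcal_K^{\varepsilon}}\subset\tcal$ under the continuous retraction map $R$; it is therefore compact. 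The only real difficulty is the coordinate bound in the middle step: the $v_n$ live in different tangent spaces, so one must pass to a bundle chart and exploit the uniform equivalence (over a compact coordinate neighbourhood) between the Riemannian norm and the Euclidean coordinate norm, together with boundedness of $\Delta$ on $K$, to control the $\lambda_n$; the rest is routine point-set topology, with $\varepsilon<1$ being precisely what prevents the limit from escaping the open set $\tcal$.
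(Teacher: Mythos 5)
Your proof is correct, but it takes a genuinely different route from the paper's. The central difficulty is the same in both: the vectors $v_n$ live in different tangent spaces, so one needs a device to compare them. You resolve this by pulling the sequence into a bundle chart $\psi_\phi$ around the limit point $x_\infty$ and exploiting the uniform equivalence between $\norm{\cdot}_x$ and the coordinate Euclidean norm on a compact coordinate patch, then applying Bolzano--Weierstrass to the coordinate vectors $\lambda_n$; the convergence $(x_n,v_n)\to(x_\infty,v_\infty)$ in $T\mcal$ follows because $\psi_\phi$ is a homeomorphism onto an open subset of $\Rbb^{2D}$ (your ``of $T\mcal$'' is a slip, but the intent is clear). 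The paper instead stays coordinate-free: it parallel transports $v_{n_k}$ to $T_x\mcal$ along the retraction curves $\sigma_k(\tau)=R_x(\tau R_x^{-1}(x_{n_k}))$, uses the isometry property of parallel transport plus continuity of $\Delta$ to get boundedness in $T_x\mcal$, and then invokes the Sasaki metric to convert the resulting limit back into a convergence statement in $T\mcal$. Your chart-based argument is the more elementary of the two, requiring neither parallel transport nor the tangent-bundle metric; the paper's version is intrinsic but technically heavier. For the second set, you deduce compactness of $\overline{\ical_K^\varepsilon}$ directly as $R(\overline{\tcal_K^\varepsilon})$, the continuous image of a compact set, after carefully verifying $\overline{\ical_x^\varepsilon}=R_x(\overline{\tcal_x^\varepsilon})$ (using $\varepsilon<1$ to keep the closed ball inside $\tcal_x$); the paper instead relies on the claimed identity $E(\overline{\tcal_K^\varepsilon})=K\times\overline{\ical_K^\varepsilon}$, which as stated is only an inclusion $\subseteq$ in general, though the direction of the implication actually used -- compactness of $\overline{\tcal_K^\varepsilon}$ forces that of $\overline{\ical_K^\varepsilon}$ -- survives since $\overline{\ical_K^\varepsilon}=\pi_2(E(\overline{\tcal_K^\varepsilon}))$. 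Your phrasing of that step is the cleaner one.
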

    \begin{proof}
    	We have that $E(\overline{\tcal_K^{\varepsilon}}) = K\times \overline{\ical_K^\varepsilon} $, where $E$ is the diffeomorphism of Proposition~\ref{prop:retractionDiffeom}. Therefore, since $K$ is compact, $\overline{\tcal_K^{\varepsilon}}$ is compact if and only if $\overline{\ical_K^\varepsilon}$ is compact. Let us prove $\overline{\tcal_K^{\varepsilon}}$ is sequentially compact, equivalent to being compact by the assumed second countability of the manifold topology. 
    	
    	Consider any sequence $\paa{\pa{x_n,v_n}}_{n\in\Nbb}\subset \overline{\tcal_K^{\varepsilon}}$. Then $\paa{x_n}_{n\in \Nbb}\subset K$ and so there exists a convergent subsequence $\paa{x_{n_k}}_{k\in\Nbb}$ such that $x_{n_k}\convergesTo{k}{\infty} x\in K $.
    	Then there exists $N>0$ such that for every $k>N$, $x_{n_k}\in \ical_{x}^{\varepsilon}$. Therefore the retraction curve $\sigma_k(\tau) = R_x(\tau R^{-1}_x(x_{n_k}))$ is well-defined for all $k > N$. Consider the parallel transport map along $\sigma_k$~\cite[Definition 10.35]{boumalBook} denoted $\parallelTransp{1}{0}{\sigma_k}: T_{x_{n_k}}\mcal\to T_x\mcal$. It is uniquely defined and is an isometry~\cite[Proposition 10.36]{boumalBook}. Let us define $w_k = \parallelTransp{1}{0}{\gamma_{k}}v_{n_k}\in T_x\mcal$.
    	Then since $\norm{v_{n_k}}_{x_{n_k}}\leq \varepsilon \Delta(x_{n_k})$ and $\parallelTransp{1}{0}{\gamma_{k}}$ is an isometry, then $\norm{w_k}_x\leq \varepsilon\Delta(x_{n_k})$. By the continuity of $\Delta$ we know $\paa{\varepsilon\Delta(x_{n_k})}_{k\in \Nbb}$ is a convergent, hence bounded sequence. Therefore $\paa{w_k}_{k\in \Nbb}\subset T_x\mcal$ is a bounded sequence, and thus admits a convergent subsequence $\paa{w_{k_j}}_{j\in \Nbb}$ such that $w_{k_j}\convergesTo{j}{\infty} w\in T_x\mcal$. But since $\norm{w_{k_j}}_x\leq \varepsilon\Delta(x_{n_{k_j}})$, we have that $\norm{w}_{x}\leq \varepsilon\Delta(x) $ and consequently that $(x,w)\in \overline{\tcal_K^{\varepsilon}}$. The standard Riemannian metric on the tangent bundle associated to any Riemannian manifold, also known as Sasaki metric~\cite[p. 80]{sasakiMetric}, allows the definition of a distance function on the tangent bundle defined analogously to~\eqref{eq:distanceFunction}, see \cite[p. 240]{tangentBundleDistance}. Then we have that
    	\begin{equation*}
    		d\pa{(x,w),\pa{x_{n_{k_j}}, v_{n_{k_j}}}} \leq \sqrt{L(\sigma_k)^2 + \norm{w - \parallelTransp{1}{0}{\sigma_{k_j}}v_{n_{k_j}}}_x}
    	\end{equation*}
    	Since the right-hand side converges to zero as $j\longrightarrow +\infty$, this shows that $\paa{(x_{n_{k_j}}, v_{n_{k_j}})}$ converges to $\pa{x,w}\in\overline{\tcal_K^{\varepsilon}}$, concluding the proof. \qed
    \end{proof}
    
    \begin{proof}(of Proposition~\ref{prop:lipschitzRetrOnCompactGeneral})
    	We know that the map $E$ of Proposition~\ref{prop:retractionDiffeom} is a diffeomorphism on	$\tcal$.
    	Therefore, the differentials of the retraction on $\overline{\tcal_K^{\varepsilon}}$ and of the inverse retraction on $\overline{\ical_K^{\varepsilon}}$ are continuous and have continuous operator norm. Since by Lemma~\ref{lem:compactSubsetsOfTcal} these sets are compact, the operator norm attains a (finite) maximum. The constants $L_1(K,\varepsilon)$, $L_2(K,\varepsilon)$, $M_1(K,\varepsilon)$, $M_2(K,\varepsilon)$ are obtained by maximizing $\operatornorm{D_1R}$ and $\operatornorm{D_2R}$ on $\overline{\tcal_K^{\varepsilon}}$ and $\operatornorm{D_1R^{-1}}$ and $\operatornorm{D_2R^{-1}}$ on $\overline{\ical_K^{\varepsilon}}$, respectively. \qed
    \end{proof}
    
    \section{Proof of Lipschitz continuity of RH scheme independent of $h$}\label{app:lipschContIndepOfH}
    The proof of Lemma~\ref{lem:lipschitzContinuityIndepOfH} leverages the Lipschitz continuity of the retraction discussed in Section~\ref{ss:lipschitzContinuityOfRetraction} and detailed Appendix~\ref{app:lipschitzContinuityOfRetraction}. Therefore note that the following sections employ notations introduced at the beginning of Appendix~\ref{app:lipschitzContinuityOfRetraction}.
    
    In this section, we first show the Lipschitz continuity of the $r$-endpoint retraction curve with respect to each of its arguments and then to all its arguments jointly. These preliminary results will be used in the proof of Lemma~\ref{lem:lipschitzContinuityIndepOfH}, reported in Appendix~\ref{app:lipschitzContinuityIndepOfHProof}. 
    \subsection{Preliminary results}\label{app:preliminaryResults}
    \begin{lemma}\label{lem:rEndPointLipschitz}
    	Let $\ucal\subset\mcal$ be retraction-convex such that
    	\begin{equation}\label{eq:lemma24AssumptionOnUcal}
    		\ucal \subset \ical_z^{1/3}, \quad\forall z\in \ucal.
    	\end{equation} The exist positive constants $L_t$, $L_r$ and $L_{xy}$ depending on the retraction verifying:  
    	\begin{enumerate}[label=(\roman*)]
    		\item $d\pa{c_r(t_1;x,y),c_r(t_2;x,y)}\leq L_t d(x,y)|t_1-t_2|$, $\forall\, x,y\in\ucal$, $\forall\, r,t_1,t_2\in\pac{0,1}$.
    		\item $d\pa{c_{r_1}(t;x,y),c_{r_2}(t;x,y)}\leq L_rd(x,y)|r_1-r_2|$, $\forall\, x,y\in\ucal$, $\forall\, r_1,r_2,t\in\pac{0,1}$.
    		\item $d\pa{c_{r}(t;x_1,y_1),c_{r}(t;x_2,y_2)}\leq L_{xy}(d(x_1,x_2)+d(y_1,y_2))$, ${\forall\, x_1,x_2,y_1,y_2\in\ucal}$, $\forall\, r,t\in\pac{0,1}$.
    	\end{enumerate}
    \end{lemma}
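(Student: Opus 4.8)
The plan is to unwind the definition $c_r(t;x,y) = R_{q(r)}\bigl((1-t)R_{q(r)}^{-1}(x) + t R_{q(r)}^{-1}(y)\bigr)$ with $q(r) = R_x(rR_x^{-1}(y))$ and in each of the three cases bound the distance between two endpoint retraction curves by repeatedly applying the Lipschitz estimates of Lemma~\ref{lem:lipschRetrAndInvRetr} together with the mean-value-type bounds in Proposition~\ref{prop:lipschRetr}. The hypothesis~\eqref{eq:lemma24AssumptionOnUcal} is what guarantees that all the points (the anchors $q(r)$, the images of the inverse retraction, etc.) stay inside the sets $\ical_z^{1/3}$, $\tcal_z^{1/3}$ on which those Lipschitz constants are valid; retraction-convexity of $\ucal$ additionally keeps every intermediate point inside $\ucal$ by Proposition~\ref{prop:rEndPointRetrCurveWellPosedness}. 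Throughout I will write $L_R = L_R(\ucal)$, $M_R = M_R(\ucal)$ for the compact-set constants of Corollary~\ref{cor:lipschitzRetrOnCompact} applied to $K = \overline{\ucal}$ (or work directly with the constants $L_1,L_2,M_1,M_2$ of Proposition~\ref{prop:lipschRetr}), and derive the three constants $L_t, L_r, L_{xy}$ as explicit products of these.

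For (i) the anchor $q(r)$ is fixed, so with $a = R_{q(r)}^{-1}(x)$, $b = R_{q(r)}^{-1}(y)$ one has $c_r(t_1;x,y) = R_{q(r)}(a + t_1(b-a))$ and $c_r(t_2;x,y) = R_{q(r)}(a + t_2(b-a))$; applying Lemma~\ref{lem:lipschRetrAndInvRetr}\ref{cor:lipschRetr} gives $d(c_r(t_1),c_r(t_2)) \le L_R \|b-a\|_{q(r)}|t_1-t_2|$, and then $\|b-a\|_{q(r)} = \|R_{q(r)}^{-1}(y) - R_{q(r)}^{-1}(x)\|_{q(r)} \le M_R\, d(x,y)$ by part~\ref{cor:lipschInvRetr}, so $L_t = L_R M_R$ works. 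For (iii), with $r$ and $t$ fixed, I first bound the movement of the anchor: $q = R_x(rR_x^{-1}(y))$ depends on $(x,y)$ through a composition of retraction and inverse retraction, so using $\D_1 R$, $\D_2 R$ (Proposition~\ref{prop:lipschRetr}\ref{prop:lipschR1}--\ref{prop:lipschR2}) and $\D_1 R^{-1}$, $\D_2 R^{-1}$ along a path from $(x_1,y_1)$ to $(x_2,y_2)$, one gets $d(q_1,q_2) \le C\bigl(d(x_1,x_2)+d(y_1,y_2)\bigr)$; then $c_r(t;x_i,y_i) = R_{q_i}((1-t)R_{q_i}^{-1}(x_i) + tR_{q_i}^{-1}(y_i))$ is again a composition, and differentiating/estimating in both the base point $q_i$ and the tangent argument yields the stated bound with a constant $L_{xy}$ assembled from $L_1,L_2,M_1,M_2$ and $C$.

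Case (ii) is the one requiring the most care and is where the choice of the curve $r \mapsto q(r) = R_x(rR_x^{-1}(y))$ — itself an endpoint retraction curve in $r$ — matters. Fixing $x,y,t$ and varying $r$, I would differentiate $r \mapsto c_r(t;x,y)$: the $r$-dependence enters only through the anchor $q(r)$, and $\frac{d}{dr}q(r) = \D_2 R_x(rR_x^{-1}(y))[R_x^{-1}(y)]$, whose norm is bounded by $L_2 \|R_x^{-1}(y)\|_x \le L_2 M_R\, d(x,y)$ using local rigidity-type control of $R_x^{-1}(y)$ via $d(x,y)$ (again part~\ref{cor:lipschInvRetr} with the trivial bound $\|R_x^{-1}(x)\|=0$). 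Then $\frac{d}{dr}c_r(t;x,y)$ is obtained by the chain rule through the base-point dependence of both the inverse retractions $R_{q(r)}^{-1}(x), R_{q(r)}^{-1}(y)$ and the outer retraction $R_{q(r)}$, each contributing a factor controlled by $\D_1 R$, $\D_1 R^{-1}$; collecting these gives $\|\frac{d}{dr}c_r(t;x,y)\|_{c_r} \le L_r\, d(x,y)$ uniformly in $t,r$, and integrating in $r$ yields (ii). The main obstacle throughout is bookkeeping: verifying that every point fed to $R$ or $R^{-1}$ in these chain-rule expansions indeed lies in the correct $\tcal^{1/3}$ or $\ical^{1/3}$ region so that the constants of Proposition~\ref{prop:lipschRetr} apply — this is exactly the role of assumption~\eqref{eq:lemma24AssumptionOnUcal} combined with retraction-convexity, and spelling out these containments carefully is the crux of the argument rather than the estimates themselves.
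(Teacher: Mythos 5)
Your proposal is correct and follows essentially the same route as the paper's proof: part (i) by composing the Lipschitz estimates for $R$ and $R^{-1}$ from Lemma~\ref{lem:lipschRetrAndInvRetr} (arriving at the same constant $L_t = L_R M_R$), and parts (ii) and (iii) by constructing a connecting curve, differentiating via the chain rule through $q(\cdot)$ and the tangent-space argument, bounding each term with the operator-norm constants of Proposition~\ref{prop:lipschRetr}, and integrating to bound the Riemannian distance by the curve length; the role of hypothesis~\eqref{eq:lemma24AssumptionOnUcal} as the containment guarantee is also identified exactly as in the paper. The only place you are slightly vaguer than the paper is in (iii), where you speak of ``a path from $(x_1,y_1)$ to $(x_2,y_2)$'' without naming it — the paper takes the endpoint retraction curves $\delta_x(\tau)=R_{x_1}(\tau R_{x_1}^{-1}(x_2))$ and $\delta_y(\tau)=R_{y_1}(\tau R_{y_1}^{-1}(y_2))$, whose derivatives are directly controlled by $L_2 M_2\, d(x_1,x_2)$ and $L_2 M_2\, d(y_1,y_2)$ and which stay inside $\ucal$ by retraction-convexity — but since you invoke Proposition~\ref{prop:rEndPointRetrCurveWellPosedness} to keep intermediates inside $\ucal$, this is clearly what you intend.
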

    \begin{proof}
    	Given $x,y\in\ucal$ and $r,t\in\pac{0,1}$, first observe that~\eqref{eq:lemma24AssumptionOnUcal} implies that the evaluation of $c_r(t;x,y)$ requires only the evaluation of the retraction on $\tcal_z^{1/3}$ and of the inverse retraction on $\ical_z^{1/3}$ for several $z\in\ucal$. Pick any $z\in\ucal$, denote $K = \overline{\ical_z^{1/3}}$ and consider the retraction's Lipschitz constants $L_1(K,1/3)$, $L_2(K,1/3)$, $M_1(K,1/3)$ and $M_2(K,1/3)$ as introduced in Proposition~\ref{prop:lipschitzRetrOnCompactGeneral} and $L_R(K) = L_2(K,1/3)$ and $M_R(K) = M_2(K,2/3)$ introduced in Corollary~\ref{cor:lipschitzRetrOnCompact}. For conciseness, we simply denote
    	\begin{align*}
    		L_1 = L_1(K,1/3),& \quad L_2 = L_2(K,1/3),\\
    		M_1 = M_1(K,1/3),& \quad M_2 = M_2(K,2/3).
    	\end{align*}
    	and we use the results of Lemma~\ref{lem:lipschRetrAndInvRetr} and Proposition~\ref{prop:lipschRetr} with these constants.
    	\\
    	\emph{(i)} Using Lemma~\ref{lem:lipschRetrAndInvRetr} with the constants defined above
    	\begin{gather}
    		d\pa{c_r(t_1;x,y),c_r(t_2;x,y)} \\
    		\begin{aligned}
    			= d\biggl(&R_{q(r)}\pa{R_{q(r)}^{-1}(x) + t_1\pa{R_{q(r)}^{-1}(y)-R_{q(r)}^{-1}(x)}},\\ &R_{q(r)}\pa{R_{q(r)}^{-1}(x) + t_2\pa{R_{q(r)}^{-1}(y)- R_{q(r)}^{-1}(x)}}\biggr)
    		\end{aligned}\\
    		\leq L_2 \|R_{q(r)}^{-1}(y)-R_{q(r)}^{-1}(x)\|_{q(r)} |t_1 - t_2|\\
    		\leq L_2M_2 d(x,y) |t_1 - t_2|.
    	\end{gather}
    	Therefore we can choose $L_t = L_2 M_2$.\\
    	\emph{(ii)} The smooth curve $\delta(\tau) := c_{r(\tau)}(t;x,y)$, where $r(\tau) = (1-\tau)r_1+\tau r_2$, $\tau\in\pac{0,1}$, joins $c_{r_1}(t;x,y)$ and $c_{r_2}(t;x,y)$. Therefore we can bound
    	\begin{equation}\label{eq:boundOnDistanceCr1_Cr2}
    		d\pa{c_{r_1}(t;x,y),c_{r_2}(t;x,y)} \leq L(\delta) \leq \underset{\tau\in\pac{0,1}}{\max}\paa{\norm{\dot\delta(\tau)}_{\delta(\tau)}}.
    	\end{equation}
    	Denoting $\xi(\tau) := (1-t)R_{q(r(\tau))}^{-1}(x) + t R_{q(r(\tau))}^{-1}(y)\in T_{q(r(\tau))}\mcal$, we have 
    	\begin{equation}
    		\begin{aligned}			
    			\delta(\tau) &= R_{q(r(\tau))}(\xi(\tau)),\\
    			q(r(\tau)) &= R_x(r(\tau)R_x^{-1}(y)).
    		\end{aligned}
    	\end{equation}
    	Therefore
    	\begin{gather}
    		\dot\delta(\tau) = \D_1R_{q(r(\tau))}\pa{\xi(\tau)}\pac{(r_2-r_1)\dot q(r(\tau))} 	+ \D_2R_{q(r(\tau))}\pa{\xi(\tau)}\biggl[\dot\xi(\tau)\biggr],
    	\end{gather}	
    	with 
    	\begin{equation}
    		\dot q(r(\tau)) = \D_2R_{q(r(\tau))}(r(\tau) R_{x}^{-1}(y))\pac{R_{x}^{-1}(y)}
    	\end{equation}
    	and
    	\begin{equation}
    		\dot\xi(\tau)= (1-t)\D_1R^{-1}_{q(r(\tau))}\pa{x}\pac{(r_2 - r_1)\dot q(r(\tau))} +\\ t\D_1R^{-1}_{q(r(\tau))}\pa{y}\pac{(r_2 - r_1)\dot q(r(\tau))}.
    	\end{equation}
    	Using Proposition~\ref{prop:lipschRetr} and Lemma~\ref{lem:lipschRetrAndInvRetr} we can bound the norms of these tangent vectors as follows: 
    	\begin{align}
    		\norm{\dot q(r(\tau))}_{q(r(\tau))} &\leq L_2\|R_x^{-1}(y)\|_x\\
    		&\leq L_2 M_2 d(x,y),
    	\end{align}
    	\begin{align}
    		\norm{\dot\xi(\tau)}_{q(r(\tau))} &\leq \pa{|1-t| + |t|}M_1\norm{\dot q(r(\tau))}_{q(r(\tau))}|r_1-r_2|\\
    		&\leq 2M_1L_2 M_2d(x,y)|r_1-r_2|,
    	\end{align}
    	\begin{align}
    		\norm{\dot\delta(\tau)}_{\delta(\tau)} &\leq L_1\norm{\dot q(r(\tau))}_{q(r(\tau))}|r_1-r_2| + L_2\norm{\dot\xi(\tau)}_{q(r(\tau))}\\
    		&\leq  (L_1 L_2  M_2 + 2L_2^2 M_1 M_2) d(x,y)|r_1-r_2|.
    	\end{align}
    	Together with~\eqref{eq:boundOnDistanceCr1_Cr2}, this shows that $L_r = (L_1 L_2 + 2L_2^2 M_1)M_2$.\\
    	\emph{(iii)} We define a manifold curve joining $c_{r}(t;x_1,y_1)$ and $c_{r}(t;x_2,y_2)$ as 
    	\begin{equation}
    		\delta(\tau) = c_r(t;\delta_x(\tau),\delta_y(\tau)), \tau\in\pac{0,1},
    	\end{equation}
    	where the curves $\delta_x(\tau) := R_{x_1}(\tau R_{x_1}^{-1}(x_2))$, $\delta_y(\tau) := R_{y_1}(\tau R_{y_1}^{-1}(y_2))$ are well-defined since endpoints belong a to retraction-convex set.
    	Then, as previously
    	\begin{equation}
    		d\pa{c_{r}(t;x_1,y_1),c_{r}(t;x_2,y_2)} \leq L(\delta) \leq \underset{\tau\in\pac{0,1}}{\max}\paa{\norm{\dot\delta(\tau)}_{\delta(\tau)}}.
    	\end{equation}
    	We have 
    	\begin{equation}
    		\delta(\tau) = R_{p(\tau)}\pa{\xi(\tau)},
    	\end{equation}
    	where 
    	\begin{align}
    		p(\tau) &:= R_{\delta_x(\tau)}(r R_{\delta_x(\tau)}^{-1}\pa{\delta_y(\tau)}),\\
    		\xi(\tau) &:= (1-t) R_{p(\tau)}^{-1}(\delta_x(\tau)) + t  R_{p(\tau)}^{-1}(\delta_y(\tau)).
    	\end{align}
    	Differentiating these quantities with respect to $\tau$ yields
    	\begin{gather}
    		\begin{aligned}
    			\dot\delta(\tau) &= \D_1R_{p(\tau)}(\xi(\tau))\pac{\dot p(\tau)} + \D_2R_{p(\tau)}(\xi(\tau))\pac{\dot\xi(\tau)},\\ 		
    			\dot p (\tau) &=  \D_1R_{\delta_x(\tau)}(r R_{\delta_x(\tau)}^{-1}\pa{\delta_y(\tau)})\pac{\dot\delta_x(\tau)}+\\
    			&\hspace*{0.6cm}\D_2 R_{\delta_x(\tau)}(r R_{\delta_x(\tau)}^{-1}\pa{\delta_y(\tau)})\biggl[r\D_1R_{\delta_x(\tau)}^{-1}(\delta_y(\tau))\pac{\dot\delta_x(\tau)} +\\ &\hspace*{5.2cm}r\D_2R_{\delta_x(\tau)}^{-1}(\delta_y(\tau))\pac{\dot\delta_y(\tau)}\biggr],\\
    			\dot\xi(\tau) &= (1-t)\biggl(\D_1R_{p(\tau)}^{-1}\pa{\delta_x(\tau)}\pac{\dot p (\tau)} + \D_2R_{p(\tau)}^{-1}\pa{\delta_x(\tau)}\pac{\dot \delta_x (\tau)}\biggr)\\
    			&\hspace*{1.1cm}+ t\biggl(\D_1R_{p(\tau)}^{-1}\pa{\delta_y(\tau)}\pac{\dot p (\tau)} + \D_2R_{p(\tau)}^{-1}\pa{\delta_y(\tau)}\pac{\dot \delta_y (\tau)}\biggr),\\
    			\dot\delta_x(\tau) &= \D_2R_{x_1}\pa{\tau R_{x_1}^{-1}(x_2)}\pac{R_{x_1}^{-1}(x_2)},\\
    			\dot\delta_y(\tau) &= \D_2R_{y_1}\pa{\tau R_{y_1}^{-1}(y_2)}\pac{R_{y_1}^{-1}(y_2)}.
    		\end{aligned}
    	\end{gather}
    	Using Proposition~\ref{prop:lipschRetr} and Lemma~\ref{lem:lipschRetrAndInvRetr} we establish the following bounds: 
    	\begin{align}
    		\norm{\dot\delta_x(\tau)}_{\delta(\tau)}\leq& L_2\norm{R_{x_1}^{-1}(x_2)} \leq  L_2 M_2d(x_1,x_2),\label{eq:lemInterMediateBound1}\\ 		 
    		\norm{\dot\delta_y(\tau)}_{\delta(\tau)}\leq& L_2\norm{R_{y_1}^{-1}(y_2)} \leq  L_2 M_2d(y_1,y_2),\label{eq:lemInterMediateBound2}\\
    		\norm{\dot p(\tau)}_{p(\tau)} \leq& \pa{L_1+L_2M_1}\norm{\dot\delta_x(\tau)}_{\delta_x(\tau)} + L_2M_2\norm{\dot\delta_y(\tau)}_{\delta_y(\tau)},\label{eq:lemInterMediateBound3}\\
    		\norm{\dot\xi(\tau)}_{p(\tau)} \leq& 2M_1\norm{\dot p(\tau)}_{p(\tau)} + M_2\norm{\dot\delta_x(\tau)}_{\delta_x(\tau)} + M_2\norm{\dot\delta_y(\tau)}_{\delta_y(\tau)},\label{eq:lemInterMediateBound4}\\
    		\norm{\dot\delta(\tau)}_{\delta(\tau)} \leq&L_1\norm{\dot p(\tau)}_{p(\tau)} + L_2\norm{\dot\xi(\tau)}_{p(\tau)}.\label{eq:lemFinalBound}
    	\end{align}
    	By suitably plugging the previous inequalities into the right-hand side of~\eqref{eq:lemFinalBound}, we find 
    	\begin{equation*}
    		\norm{\dot\delta(\tau)}_{\delta(\tau)} \leq L_x d(x_1,x_2) + L_y d(y_1, y_2)
    	\end{equation*}
    	where $L_x$ and $L_y$ are polynomials of $L_1, L_2, M_1$ and $M_2$.
    	Then, we can take $L_{xy} = \max\paa{L_x,L_y}$. \qed
    \end{proof}
    \begin{corollary}\label{cor:rEndPointLipschitz}
    	Let $\ucal\subset \mcal$ be any retraction-convex set as in Lemma~\ref{lem:rEndPointLipschitz}. For any ${x_1,x_2,y_1,y_2\in\ucal}$  and for every $r_1,r_2,t_1,t_2\in\pac{0,1}$, we have
    	\begin{equation}
    		\begin{aligned}			
    			d\pa{c_{r_1}(t_1;x_1,y_1),c_{r_2}(t_2;x_2,y_2)}\leq &\pa{d(x_1,y_1) + d(x_2,y_2)}\pa{\oneOver{2}L_t|t_1-t_2| + \oneOver{2}L_r|r_1-r_2|} \\ + &\pa{d(x_1,x_2) + d(y_1,y_2)}L_{xy}.
    		\end{aligned}
    	\end{equation}
    \end{corollary}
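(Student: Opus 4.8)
The plan is to obtain the joint estimate by a short telescoping argument built entirely on the three single-variable bounds of Lemma~\ref{lem:rEndPointLipschitz}, routing the deviation through the midpoints of the parameter intervals so that it splits symmetrically. I would set $\bar t := (t_1+t_2)/2$ and $\bar r := (r_1+r_2)/2$; these lie in $\pac{0,1}$ because $t_1,t_2,r_1,r_2\in\pac{0,1}$, and every $r$-endpoint retraction curve appearing below is well-defined by Proposition~\ref{prop:rEndPointRetrCurveWellPosedness} since $x_1,x_2,y_1,y_2\in\ucal$ and $\ucal$ is retraction-convex as in Lemma~\ref{lem:rEndPointLipschitz}.

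First I would apply the triangle inequality with the two intermediate points $c_{\bar r}(\bar t;x_1,y_1)$ and $c_{\bar r}(\bar t;x_2,y_2)$, producing three terms. For the first term $d\pa{c_{r_1}(t_1;x_1,y_1),c_{\bar r}(\bar t;x_1,y_1)}$, I would insert $c_{r_1}(\bar t;x_1,y_1)$ and use part~(i) of the lemma to pass from $t_1$ to $\bar t$ and part~(ii) to pass from $r_1$ to $\bar r$; since $|t_1-\bar t| = \tfrac12|t_1-t_2|$ and $|r_1-\bar r| = \tfrac12|r_1-r_2|$, this term is at most $d(x_1,y_1)\pa{\tfrac12 L_t|t_1-t_2| + \tfrac12 L_r|r_1-r_2|}$. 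The third term is handled identically with $x_2,y_2$ in place of $x_1,y_1$, contributing $d(x_2,y_2)\pa{\tfrac12 L_t|t_1-t_2| + \tfrac12 L_r|r_1-r_2|}$. The middle term $d\pa{c_{\bar r}(\bar t;x_1,y_1),c_{\bar r}(\bar t;x_2,y_2)}$ is precisely the setting of part~(iii) of the lemma (common parameters $\bar r,\bar t$, differing endpoints), hence bounded by $L_{xy}\pa{d(x_1,x_2)+d(y_1,y_2)}$. Summing the three contributions gives exactly the claimed inequality.

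There is essentially no hard step: the only things to verify are that the midpoints $\bar t,\bar r$ remain admissible parameters, which holds by convexity of $\pac{0,1}$, and that every intermediate curve evaluation stays inside the retraction-convex set $\ucal$ so that Lemma~\ref{lem:rEndPointLipschitz} is applicable, which is immediate from the standing hypothesis on $\ucal$. I would note in passing that telescoping instead through $c_{r_1}(t_1;x_2,y_2)$ would also work but yield an asymmetric bound with a single $d(x_i,y_i)$ prefactor; passing through the parameter midpoint is exactly what produces the symmetric $\tfrac12\pa{d(x_1,y_1)+d(x_2,y_2)}$ form stated in the corollary.
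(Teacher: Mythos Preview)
Your argument is correct and yields exactly the stated inequality; the only difference from the paper is the route of the telescoping. The paper does not pass through the parameter midpoints: it first changes $t_1\to t_2$ (incurring $L_t d(x_1,y_1)|t_1-t_2|$), then changes $(x_1,y_1)\to(x_2,y_2)$ (incurring the $L_{xy}$ term), then changes $r_1\to r_2$ (incurring $L_r d(x_2,y_2)|r_1-r_2|$), obtaining the asymmetric estimate you allude to at the end; it then repeats the whole computation with the roles of the indices $1$ and $2$ swapped and averages the two resulting bounds to recover the symmetric $\tfrac12\bigl(d(x_1,y_1)+d(x_2,y_2)\bigr)$ coefficients. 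Your midpoint telescoping reaches the symmetric form in a single pass and is a bit more economical; the paper's approach has the minor by-product of also establishing the asymmetric intermediate bound explicitly. Both rely only on Lemma~\ref{lem:rEndPointLipschitz} and the triangle inequality, so they are equivalent in strength.
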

    \begin{proof}
    	Using the triangular inequality and Lemma~\ref{lem:rEndPointLipschitz} we have 
    	\begin{align}
    		&\begin{aligned}
    			&d\pa{c_{r_1}(t_1;x_1,y_1),c_{r_2}(t_2;x_2,y_2)}\\  \leq& d\pa{c_{r_1}(t_1;x_1,y_1),c_{r_1}(t_2;x_1,y_1)} + d\pa{c_{r_1}(t_2;x_1,y_1),c_{r_2}(t_2;x_2,y_2)}
    		\end{aligned}\\
    		&\begin{aligned}
    			\leq &L_td(x_1,y_1)|t_1-t_2| + d\pa{c_{r_1}(t_2;x_1,y_1),c_{r_1}(t_2;x_2,y_2)} \\&+ d\pa{c_{r_1}(t_2;x_2,y_2),c_{r_2}(t_2;x_2,y_2)}
    		\end{aligned}\\
    		&\hspace*{-0.1cm}\leq\hspace*{-0.1cm} L_td(x_1,y_1)|t_1-t_2| + L_{xy} \pa{d(x_1,x_2) + d(y_1, y_2)} + L_r d(x_2,y_2) |r_1-r_2|.\label{eq:corBound1}
    	\end{align}
    	Exchanging $(r_1,t_1,x_1, y_1)$ and $(r_2, t_2, x_2, y_2)$ and repeating the same procedure leads to 
    	\begin{equation}\label{eq:corBound2}		
    		\begin{aligned}
    			d\pa{c_{r_2}(t_2;x_2,y_2),c_{r_1}(t_1;x_1,y_1)} \leq &L_td(x_2,y_2)|t_1-t_2| + L_{xy} \pa{d(x_1,x_2) + d(y_1, y_2)}\\ + &L_r d(x_1,y_1) |r_1-r_2|.
    		\end{aligned}
    	\end{equation}	
    	Then averaging~\eqref{eq:corBound1} and~\eqref{eq:corBound2} proves the result. \qed
    \end{proof}
    
    \subsection{Proof of Lemma~\ref{lem:lipschitzContinuityIndepOfH}}\label{app:lipschitzContinuityIndepOfHProof}
    The following proof considers the RH interpolant given in Definition~\ref{def:genCasteljau}, that is with the particular choice~\eqref{eq:bestChoiceOfr1r01r12r012} for the functions $r_1,r_{01},r_{12}$, and $r_{012}$. Nevertheless, the result remains valid for any choice of these functions, as long as they are Lipschitz continuous.
    \begin{proof} (of Lemma~\ref{lem:lipschitzContinuityIndepOfH})
    	In the proof of Proposition~\ref{prop:wellDefinitessOfRHOfCurve}, we show the RH interpolant $H_h$ defined in~\eqref{eq:hh} is well-defined provided $h < h_1$. In fact, then all control points to construct $H_h$ are in contained in the retraction-convex set $B(\gamma(t),Q h)\subset B(\gamma(t), \rho_{\min})$, for some $Q>0$ independent of $h$. For every $x\in\mcal$, define 
    	\begin{equation}
    		\bar\nu(x) = \sup\paa{\nu > 0: B(x,\nu)\subset \ical_x^{1/3}}.
    	\end{equation} 
    	The constant $\nu_{\min} := \inf_{\tau\in\pac{0,1}}{\bar\nu(\gamma(\tau))}$ is strictly positive since, if it was zero, this would violate the Lipschitz continuity of the retraction on the compact set $\gamma\pa{\pac{0,1}}$, guaranteed by Proposition~\ref{prop:lipschitzRetrOnCompactGeneral}.
    	Therefore, if we require $h < h_3:=\min\paa{h_1, \frac{\nu_{\min}}{2 Q}}$, then for any $x,y\in B(\gamma(t),Qh)$ we have 
    	\begin{equation*}
    		d(x,y) \leq d(x,\gamma(t)) + d(y,\gamma(t)) < \nu_{\min}.
    	\end{equation*}
    	Therefore, if $h<h_3$, all control points defining $H_h$ are contained into a retraction-convex set verifying~\eqref{eq:lemma24AssumptionOnUcal}.
    	Hence, in the following we can use the constants $L_t$, $L_r$ and $L_{xy}$ given by Corollary~\ref{cor:rEndPointLipschitz}. 
    	Let us denote 
    	\begin{equation*}
    		p_0 = \gamma(t),\quad v_0  = \dot\gamma(t),\quad p_1 = \gamma(t+h),\quad v_1 = \dot\gamma(t+h).
    	\end{equation*}
    	From Corollary~\ref{cor:hermiteRetrFullSolution}, we have that
    	\begin{equation}
    		H_h(\tau) = \alpha\pa{\frac{\tau-t}{h};p_0, hv_0, p_1, hv_1}.
    	\end{equation}
    	We shall prove that there exists $L_{RH}$ (depending explicitly on $L_t$, $L_r$ and $L_t$) such that for any $z_1,z_2\in\pac{0,1}$
    	\begin{equation}\label{eq:alphaLipCont}
    		d(\alpha\pa{z_1;p_0, hv_0, p_1, hv_1}, \alpha\pa{z_2;p_0, hv_0, p_1, hv_1}) \leq L_{RH} h |z_1-z_2|.
    	\end{equation}
    	Then using~\eqref{eq:alphaLipCont}, we can conclude that for any $\tau_1,\tau_2\in\pac{t,t+h}$ 
    	\begin{align}
    		d(H_h(\tau_1),H_h(\tau_2)) &= d\pa{\alpha\pa{\frac{\tau_1-t}{h};p_0, hv_0, p_{1}, hv_{1}},\alpha\pa{\frac{\tau_2-t}{h};p_0, hv_0, p_{1}, hv_{1}}}\\
    		&\leq L_{RH} h \frac{|\tau_1 - \tau_2|}{h} = L_{RH} |\tau_1-\tau_2|.
    	\end{align}
    	Let us now prove~\eqref{eq:alphaLipCont} by unfolding the recursive definition of $\alpha$.
    	\begin{equation}
    		\alpha\pa{z;p_0, hv_0, p_{1}, hv_{1}} = \beta_{012}(z,z,z) = c_{z}(z; \beta_{01}(z,z), \beta_{12}(z,z)).
    	\end{equation} 
    	Applying several times Corollary~\ref{cor:rEndPointLipschitz} we find
    	\begin{equation}\label{eq:mainInequalityToProve}
    		\begin{aligned}
    			&d(c_{z_1}(z_1; \beta_{01}(z_1,z_1), \beta_{12}(z_1,z_1)),c_{z_2}(z_2; \beta_{01}(z_2,z_2), \beta_{12}(z_2,z_2)))\\
    			&\leq |z_1 - z_2| \frac{L_t + L_r}{2} \pa{d(\beta_{01}(z_1, z_1), \beta_{12}(z_1,z_1)) + d(\beta_{01}(z_2, z_2), \beta_{12}(z_2,z_2))} \\
    			&\quad+ L_{xy} \pa{d(\beta_{01}(z_1,z_1),\beta_{01}(z_2, z_2)) + d(\beta_{12}(z_1,z_1),\beta_{12}(z_2, z_2))},
    		\end{aligned}
    	\end{equation}
    	\begin{equation}\label{eq:intermediateInequality1_1}
    		\begin{aligned}
    			d(\beta_{01}(z_1, z_1), \beta_{12}(z_1,z_1)) &= d(c_{0}(z_1, \beta_0(z_1), \beta_1(z_1)), c_{1}(z_1, \beta_1(z_1), \beta_2(z_1)))\\
    			&\leq \pa{\frac{1}{2} L_r + L_{xy}}\pa{d(\beta_0(z_1), \beta_1(z_1)) + d(\beta_1(z_1), \beta_2(z_1))},
    		\end{aligned}
    	\end{equation}
    	\begin{equation}\label{eq:intermediateInequality1_2}
    		\begin{aligned}
    			d(\beta_{01}(z_2, z_2), \beta_{12}(z_2,z_2)) \leq \pa{\frac{1}{2} L_r + L_{xy}}\pa{d(\beta_0(z_2), \beta_1(z_2)) + d(\beta_1(z_2), \beta_2(z_2))},
    		\end{aligned}
    	\end{equation}
    	\begin{equation}\label{eq:intermediateInequality1_3}
    		\begin{aligned}
    			d(\beta_{01}(z_1, z_1), \beta_{01}(z_2,z_2)) &= d(c_{0}(z_1, \beta_0(z_1), \beta_1(z_1)), c_{0}(z_2, \beta_0(z_2), \beta_1(z_2)))\\
    			&\leq\frac{L_t}{2}|z_1 - z_2|\pa{d(\beta_0(z_1), \beta_1(z_1) + d(\beta_0(z_2),\beta_1(z_2))} \\
    			&\quad+ L_{xy}\pa{d(\beta_0(z_1), \beta_0(z_2)) + d(\beta_1(z_1), \beta_1(z_2))},
    		\end{aligned}
    	\end{equation}
    	\begin{equation}\label{eq:intermediateInequality1_4}
    		\begin{aligned}
    			d(\beta_{12}(z_1, z_1), \beta_{12}(z_2,z_2)) 
    			&\leq\frac{L_t}{2}|z_1 - z_2|\pa{d(\beta_1(z_1), \beta_2(z_1) + d(\beta_1(z_2),\beta_2(z_2))} \\
    			&\quad+ L_{xy}\pa{d(\beta_1(z_1), \beta_1(z_2)) + d(\beta_2(z_1), \beta_2(z_2))}.
    		\end{aligned}
    	\end{equation}
    	Plugging~\eqref{eq:intermediateInequality1_1}-\eqref{eq:intermediateInequality1_4} in~\eqref{eq:mainInequalityToProve} and rearranging terms yields
    	\begin{equation}\label{eq:intermediateInequality2}
    		\begin{aligned}
    			d(c_{z_1}(z_1; \beta_{01}(z_1,z_1), \beta_{12}(z_1,z_1)),c_{z_2}(z_2; \beta_{01}(z_2,z_2), \beta_{12}(z_2,z_2)))\\ \leq|z_1 - z_2|\frac{(L_t + L_r)(L_r + 2 L_{xy}) + 2L_{xy}L_t}{4} \\
    			\cdot\pa{d(\beta_0(z_1), \beta_1(z_1)) + d(\beta_1(z_1), \beta_2(z_1)) + d(\beta_0(z_2), \beta_1(z_2)) + d(\beta_1(z_2), \beta_2(z_2))}\\
    			+ L_{xy}^2\pa{d(\beta_0(z_1), \beta_0(z_2)) + 2d(\beta_1(z_1), \beta_1(z_2))+ d(\beta_2(z_1), \beta_2(z_2))}.
    		\end{aligned}
    	\end{equation}
    	We now bound the seven distance function evaluations using once again Corollary~\ref{cor:rEndPointLipschitz}.
    	\begin{align}
    		d(\beta_0(z_1), \beta_1(z_1)) &\leq \pa{\frac{L_r}{4} + L_{xy}}\pa{d(p_0, p_0^+) + d(p_0^+, p_1^-)},  \\			
    		d(\beta_0(z_2), \beta_1(z_2)) &\leq \pa{\frac{L_r}{4} + L_{xy}}\pa{d(p_0, p_0^+) + d(p_0^+, p_1^-)},  \\			
    		d(\beta_1(z_1), \beta_2(z_1)) &\leq \pa{\frac{L_r}{4} + L_{xy}}\pa{d(p_0^+, p_1^-) + d(p_1^-, p_1)},\\			
    		d(\beta_1(z_2), \beta_2(z_2)) &\leq \pa{\frac{L_r}{4} + L_{xy}}\pa{d(p_0^+, p_1^-) + d(p_1^-, p_1)},\\
    		d(\beta_0(z_1), \beta_0(z_2)) &\leq L_t d(p_0, p_0^+)|z_1 - z_2|,  \\
    		d(\beta_1(z_1), \beta_1(z_2)) &\leq L_t d(p_0^+, p_1^-)|z_1 - z_2|,  \\
    		d(\beta_2(z_1), \beta_2(z_2)) &\leq L_t d(p_1^-, p_1)|z_1 - z_2|.  
    	\end{align}
    	Inserting these bounds in~\eqref{eq:intermediateInequality2} provides a constant $\tilde L>0$ depending only on $L_t$, $L_r$ and $L_{xy}$ such that 
    	\begin{equation}\label{eq:lipContAlphaAlmostFinal}
    		\begin{aligned}
    			d(\alpha\pa{z_1;p_0, hv_0, p_1, hv_{1}}, \alpha\pa{z_2;p_0, hv_0, p_1, hv_{1}}) \\
    			\leq (d(p_0, p_0^+) + d(p_0^+, p_1^-) + d(p_1^-, p_1))\tilde L |z_1 - z_2|.
    		\end{aligned}
    	\end{equation}
    	Using Corollary~\ref{cor:rEndPointLipschitz}-\ref{cor:lipschRetr} and denoting $L_\gamma$ the Lipschitz constant of the curve $\gamma$ we find
    	\begin{align}
    		d(p_0, p_0^+) &= d\pa{\gamma\pa{t}, R_{\gamma(t)}\pa{\frac{h}{3}\dot\gamma(t)}}\\ 
    		&\leq L_2 \norm{\frac{h}{3}\dot\gamma(t)}\leq \frac{h}{3}L_2 L_\gamma,  \\
    		d(p_1^-, p_1) &= d\pa{R_{\gamma(t+h)}\pa{-\frac{h}{3}\dot\gamma(t+h)}, \gamma(t+h)} \\ &\leq L_2 \norm{\frac{h}{3}\dot\gamma(t+h)}\leq \frac{h}{3}L_2 L_\gamma,	\\
    		d(p_0^+, p_1^-) &\leq d(p_0^+, p_0) + d(p_0, p_1) + d(p_1, p_1^-)\\
    		&\leq  \frac{2h}{3}L_2L_\gamma + hL_\gamma .
    	\end{align}
    	Finally, plugging these bounds into~\eqref{eq:lipContAlphaAlmostFinal} proves~\eqref{eq:alphaLipCont} with $L_{RH} = \tilde L L_\gamma \pa{\frac{4}{3}L_2 + 1}$ and concludes the proof. \qed
    \end{proof}

\end{document}